\DeclareMathOperator{\mingens}{Mingens}
\DeclareMathOperator{\lcm}{lcm}
\DeclareMathOperator{\sign}{sgn}
\DeclareMathOperator{\scarfnumber}{Scarf-number}
\DeclareMathOperator{\rank}{rank}
\DeclareMathOperator{\image}{Im}
\DeclareMathOperator{\pd}{pd}
\newtheorem{theorem}{Theorem}[section]
\newtheorem{lemma}[theorem]{Lemma}
\newtheorem{proposition}[theorem]{Proposition}
\newtheorem{corollary}[theorem]{Corollary}
\theoremstyle{definition}
\newtheorem{definition}[theorem]{Definition}
\newtheorem{example}[theorem]{Example}
\theoremstyle{remark}
\newtheorem{remark}[theorem]{Remark}
\numberwithin{equation}{section}
\begin{document}

\title{A family of simplicial resolutions which are DG-algebras}

\author{James Cameron}
\address{Department of Mathematics, University of Utah, 155 South 1400 East, Salt Lake City, UT~84112, USA}
\email{cameron@math.utah.edu}

\author{Trung Chau}
\address{Chennai Mathematical Institute, Chennai, Tamil Nadu 603103, India}
\email{chauchitrung1996@gmail.com}

\author{Sarasij Maitra}
\address{Department of Mathematics, University of Utah, 155 South 1400 East, Salt Lake City, UT~84112, USA}
\email{sarasij.maitra@utah.edu}

\author{Tim Tribone}
\address{Department of Mathematics, University of Utah, 155 South 1400 East, Salt Lake City, UT~84112, USA}
\email{tim.tribone@utah.edu}

\dedicatory{This paper is dedicated to  J\"urgen Herzog.}

\begin{abstract}
    Each monomial ideal over a polynomial ring admits a free resolution which has the structure of a DG-algebra, namely, the Taylor resolution. A pivot resolution of a monomial ideal, which we introduce, is a resolution that is always shorter than the Taylor resolution (unless the Taylor resolution is as short as possible) but still retains a DG-algebra structure. We study the basic properties of this family of resolutions including a characterization of when the construction is minimal. Following the work of Sobieska, we use the explicit nature of pivot resolutions to give formulae for the Eisenbud-Shamash construction of a free resolution of a given monomial ideal over complete intersections.
\end{abstract}
\maketitle

\section{Introduction}

Let $Q$ be a commutative ring and $I$ an ideal of $Q$. A recurring theme in the study of free resolutions of $Q/I$ over $Q$ is the contrast between highly structured resolutions, such as those with the structure of a differential graded (DG) algebra, and resolutions that are close to the minimal one. Over a polynomial ring, where resolutions can be taken to be finite, this dichotomy persists; the resolutions which are as short as possible may not support a multiplicative structure \cite{Avr81,Kat19} while resolutions which do have the structure of a DG-algebra are often far from being minimal \cite{Ge76}.

Classic examples of DG-algebra resolutions of $Q/I$ over $Q$ include the construction of Tate \cite{Tate57}, the minimal resolution of $Q/I$ if $I$ is generated by a regular sequence (in which case the minimal resolution is the Koszul complex), or if $\pd_Q(Q/I)\leq 3$ \cite{BE77}~(c.f.\cite[Proposition~2.1.4]{Avr98}). We refer to \cite[2.8]{Nasseh2021} and \cite{PeevaThesis, Sko11} for more known cases.

In the case that $Q$ is a polynomial ring over a field and $I$ is a monomial ideal, there always exists a free resolution of $Q/I$ over $Q$ which has the structure of a DG-algebra, namely, the Taylor resolution \cite{Tay66}, and indeed, it is usually highly non-minimal, especially as the number of generators of $I$ increases. On the other hand, there are now numerous examples in the literature of ideals whose minimal free resolutions do not support a DG-algebra structure. Of particular interest to us are the recent examples of Katth\"an \cite{Kat19} which show that Lyubeznik  resolutions \cite{Ly88} and the Scarf complex \cite{BPS98} need not support a multiplicative structure in general. These are improvements of Taylor resolutions, and are minimal under different~circumstances.

In this paper, we construct a new family of resolutions, called \textbf{pivot resolutions} (and more generally \textbf{pivot complexes}) using discrete Morse Theory; see \cite{BW02, CK24} or \cref{sec:background} for a brief introduction. They serve as a middle ground between these two extremes in the following sense: they are always shorter than the Taylor resolution (unless the Taylor resolution itself is minimal) and they always have the structure of a DG-algebra which is inherited from the Taylor resolution.

\begin{theorem}[Theorem ~\ref{thm:pivot-is-DG}]
    Let $Q$ be a polynomial ring over a field and $I$ a monomial ideal. Then $Q/I$ has a pivot resolution, and any such resolution has the structure of a DG-algebra over $Q$.
\end{theorem}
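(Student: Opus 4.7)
My plan is to separate the two claims: establishing that a pivot resolution exists, and then showing any such resolution carries a DG-algebra structure.

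For existence, the natural approach is discrete Morse theory applied to the Taylor complex $T_I$ of $I$. A pivot complex should be the Morse complex arising from a specific combinatorial matching on $T_I$: for each face $F$ of $T_I$ one designates a pivot among the generators indexed by $F$ and pairs $F$ with an adjacent face when the corresponding entry of the Taylor differential is a unit, i.e.\ when the two monomial labels agree. The first step is to verify that this rule defines a genuine matching on the face poset of $T_I$ and that the induced directed graph (with matched edges reversed) is acyclic. Once acyclicity is established, the standard Morse-theoretic theorem (see \cite{BW02,CK24}) automatically yields a free resolution of $Q/I$ whose generators in homological degree $i$ are indexed by the critical $i$-faces; this resolution is the pivot resolution.

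For the DG-algebra statement, the strategy is transfer of structure. The Taylor resolution is a classical DG-algebra, with the product of face symbols $e_F \cdot e_G$ equal to $\pm \frac{\lcm(F)\lcm(G)}{\lcm(F\cup G)}\, e_{F \cup G}$ when $F \cap G = \emptyset$ and zero otherwise. A Morse matching on $T_I$ induces a chain homotopy equivalence between $T_I$ and its Morse complex, and a standard homological perturbation argument produces an induced binary product on the Morse complex satisfying the Leibniz rule up to higher $A_\infty$-corrections. What remains is to show that these higher corrections vanish for pivot matchings, so that the transferred product is a genuine DG-algebra structure rather than just an $A_\infty$-algebra structure.

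The main obstacle will be precisely this compatibility between the pivot matching and the Taylor multiplication. I expect a careful proof to exploit the hierarchical or local nature of the pivot choice: because pivots are selected in a controlled way, the iterated reductions that appear in the transferred product have a predictable form, and one can hope to show that all higher $A_\infty$-terms collapse combinatorially. A more direct alternative is to define the product on the pivot resolution by an explicit formula mimicking the Taylor product on the monomial labels of critical cells, and then verify the Leibniz rule and associativity by hand, with the Morse-theoretic acyclicity entering only to justify that the underlying complex is a resolution. Either route ultimately reduces to a combinatorial verification that wedging of Taylor basis elements is respected by the pivot contraction.
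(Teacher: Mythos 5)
Your proposal correctly identifies discrete Morse theory as the engine behind existence, but the DG-algebra part has a genuine gap: you propose to transfer the Taylor multiplication via homological perturbation and then prove that the induced $A_\infty$-corrections vanish, yet you offer no concrete mechanism to make that vanishing happen. This is not a small detail one can wave away --- for general Morse matchings on the Taylor complex the transferred product really does carry nonzero higher corrections, so the proof would have to hinge on a very specific structural feature of the pivot matching, which your sketch only gestures at (``hierarchical or local nature of the pivot choice''). The alternative route you mention (write down an explicit product by mimicking the Taylor formula and verify Leibniz and associativity by hand) is closer to a workable plan and is in fact noted in the paper as a remark, but it is considerably more laborious because the product formula on a pivot resolution genuinely deviates from the na\"ive Taylor formula on the faces containing $[l]$.

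What your proposal misses is the observation that makes the problem easy. When the pivot complex $\mathcal{T}_{i_1,\dots,i_l}$ is a resolution, it is a Morse resolution for the matching $A=\{\tau\cup h\to\tau\setminus h:\tau\supseteq[l]\}$ where $h$ is a single fixed gap of $\{i_1,\dots,i_l\}$, and for this particular matching the Morse complex is not merely homotopy equivalent to the Taylor complex --- it is literally the quotient $\mathcal{T}/\mathcal{I}$, where $\mathcal{I}=\operatorname{span}_Q\{\epsilon_\tau,\partial\epsilon_\tau:\tau\supseteq[l]\cup h\}$. Once this is in hand, the DG-algebra structure is inherited for free: one checks directly that $\mathcal{I}$ is a DG-ideal of the Taylor DG-algebra (closed under $\partial$, absorbing under $\star$), which is a short computation because $\tau\cup\sigma\supseteq[l]\cup h$ whenever $\tau\supseteq[l]\cup h$. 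No $A_\infty$-corrections appear and no case-by-case verification of associativity is needed. Your existence sketch is also slightly off in its description of the matching: you describe pairings driven by a per-face pivot choice (more like the Lyubeznik construction), whereas the matching behind a pivot resolution is driven by one global gap element $h$ applied only to faces containing $[l]$, and acyclicity is obtained by recognizing it as a subset of a Lyubeznik matching rather than by a fresh argument.
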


We also characterize which pivot complexes are resolutions in \cref{pivot}.
\begin{theorem}[Theorem ~\ref{thm:pivot-when-resolution}]
    Let $I$ be an ideal of $Q$ minimally generated by monomials $m_1,\dots, m_q$. Then the pivot complex corresponding to an increasing sequence $\{i_1< \cdots < i_l\}\subseteq \{1,2,\ldots,q\}$ for some $l<q$ is a resolution of $Q/I$ if and only if there exists an index $h\in \{1,2,\dots, q\}\setminus \{i_1,\dots, i_l\}$ such that $m_h \mid \lcm(m_{i_1},\dots, m_{i_l})$. 
\end{theorem}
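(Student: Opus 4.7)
The plan is to reduce the problem to verifying when the discrete Morse matching underlying the pivot complex is acyclic. Recall from the construction earlier in the paper that the pivot complex is the Morse complex of a specific matching on the face poset of the Taylor simplex, built from the sequence $\{i_1 < \cdots < i_l\}$. The fundamental theorem of discrete Morse theory guarantees that the Morse complex is chain-homotopy equivalent to the Taylor complex whenever the matching is acyclic. Hence, showing the pivot complex is a resolution of $Q/I$ amounts to showing the matching is acyclic and that no unwanted critical cells survive; showing it is not a resolution amounts to exhibiting a concrete obstruction to exactness.

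For the backward direction ($\Leftarrow$), suppose there exists a non-pivot index $h$ with $m_h \mid \lcm(m_{i_1}, \ldots, m_{i_l})$. I would check directly that the prescribed matching has no gradient cycles, using $h$ as a witness: since $m_h$ already divides the lcm of the pivot generators and $h$ lies outside $\{i_1, \ldots, i_l\}$, every face containing $\{i_1, \ldots, i_l\}$ can be consistently paired with a face obtained by toggling a designated pivot, without creating cycles. Discrete Morse theory then delivers a free resolution.

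For the forward direction ($\Rightarrow$), I would prove the contrapositive. Suppose no such $h$ exists, so every non-pivot generator $m_h$ fails to divide $\lcm(m_{i_1}, \ldots, m_{i_l})$. I would then verify that the face $\{i_1, \ldots, i_l\}$ remains critical under the matching, so the corresponding basis element sits in homological degree $l \ge 1$ with multidegree $\lcm(m_{i_1}, \ldots, m_{i_l})$. Any cancellation in that multidegree would require a non-pivot generator whose multidegree divides this lcm, and by assumption no such generator exists. This leaves a nonzero homology class in positive degree, so the pivot complex fails to be exact.

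The main obstacle will be the bookkeeping needed in the ($\Leftarrow$) direction: one must trace how gradient paths navigate the face poset under the toggling rule and rule out directed cycles uniformly across all faces, with $h$ serving as a global anchor. In the ($\Rightarrow$) direction, the subtlety is producing a genuine obstruction to exactness rather than merely noting that the matching is no longer acyclic, since the latter only shows that discrete Morse theory does not directly apply and does not by itself prevent the complex from coincidentally being a resolution.
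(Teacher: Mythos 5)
Your starting point is wrong: the pivot complex $\mathcal{T}_{i_1,\dots,i_l}$ is \emph{defined} as the subcomplex of the Taylor resolution spanned by the $\epsilon_\sigma$ with $\sigma\nsupseteq\{i_1,\dots,i_l\}$, and it is not a priori the Morse complex of any matching ``built from the sequence.'' No matching can be read off from the pivot data alone; the matching the paper uses is $A=\{\tau\cup h\to\tau\setminus h:\tau\supseteq\{i_1,\dots,i_l\}\}$, which requires the gap $h$ as input, and the homogeneity requirement $m_{\tau\cup h}=m_{\tau\setminus h}$ holds precisely because $h$ is a gap. So in the forward direction there is no matching whose acyclicity you could interrogate, and in the backward direction, after establishing that $A$ is a Morse matching, you must still show the resulting Morse resolution $\mathcal{F}_A$ has the same differentials as the subcomplex $\mathcal{T}_{i_1,\dots,i_l}$ — a step your proposal omits. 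The paper sidesteps both of your anticipated difficulties: $A$ is a subset of a Lyubeznik matching (order so that $m_h$ is smallest), hence a Morse matching by Lemma~\ref{lem:subset-Morse} with no gradient-path bookkeeping, and the $A$-critical cells are closed under subsets, so \cite[Proposition~5.2]{CK24} gives that the Morse differentials are the restricted Taylor differentials.

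The forward direction as you state it is not correct. You write that ``the face $\{i_1,\dots,i_l\}$ remains critical under the matching, so the corresponding basis element sits in homological degree $l$'' — but $\epsilon_{\{i_1,\dots,i_l\}}$ is precisely the generator the pivot complex deletes, so it sits nowhere in $\mathcal{T}_{i_1,\dots,i_l}$, and there is no matching to be critical under. The real obstruction lives one degree lower: $z=\partial\epsilon_{\{i_1,\dots,i_l\}}$ is a nonzero cycle of the pivot complex in degree $l-1$, homogeneous of multidegree $\lcm(m_{i_1},\dots,m_{i_l})$; if no gap exists, then no $\epsilon_\tau$ in the pivot complex with $|\tau|=l$ can have multidegree dividing that lcm (any $j\in\tau\setminus\{i_1,\dots,i_l\}$ would be a gap), so $z$ cannot be a boundary and exactness fails at degree $l-1$. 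Your ``no cancellation in that multidegree'' intuition is thus salvageable, but needs this repair and a multidegree argument you do not supply. The paper instead argues structurally via \cite[Theorem~1.6]{Roberts:1980}: both $\mathcal{T}$ and $\mathcal{T}_{i_1,\dots,i_l}$ being multigraded resolutions of $Q/I$ forces the excised summands to split into trivial pairs of equal multidegree, so $\epsilon_{\{i_1,\dots,i_l\}}$ must be paired with some removed $\epsilon_\tau$ of size $l+1$ with $m_\tau=m_{\{i_1,\dots,i_l\}}$, and the gap falls out immediately.
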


Just like Lyubeznik resolutions and the Scarf complex, pivot resolutions are subcomplexes of the Taylor resolution in a canonical way. In particular, for each pivot resolution, there exists a Lyubeznik resolution such that
\[
    \text{Lyubeznik resolution } \subseteq \text{Pivot resolution } \subseteq \text{Taylor resolution}
    \]
as complexes (c.f. Remark \ref{rem:Lyubeznik-pivot-Taylor}). Katth\"an's example \cite[Theorem 5.1]{Kat19} is an instance where pivot resolutions are good middle ground for when one wants free resolutions that are close to the minimal one and still retain the DG-algebra structure. Furthermore, we show that unless the Taylor resolution is minimal, there always exists a pivot resolution shorter than the Taylor resolution itself. 

In \Cref{pivot}, we introduce the notion of Scarf-number which plays an important role in finding the ``smallest" pivot resolution. We also determine when a monomial ideal admits a minimal pivot resolution (see \cref{cor:Betti-minimal-pivot}).

Let $\mathfrak{a}$ be an ideal of $Q$ that is generated by a regular sequence, and is contained in the monomial ideal $I$. Set $R=Q/\mathfrak{a}$. Any free resolution of $Q/I$ over $Q$ induces a free resolution of $R/I$ over the complete intersection $R$ using a collection of maps (referred to as a \emph{system of higher homotopies}) with respect to $\mathfrak{a}$. This is called the Eisenbud-Shamash construction (see, e.g., \cite{Eisenbud/Peeva:2016}). Moreover, due to the DG-algebra structure, Taylor resolutions and pivot resolutions of $Q/I$ over $Q$ are DG-modules over the Koszul complex on any regular sequence $a_1,\ldots,a_r$ such that $(a_1,\ldots,a_r)\subseteq I$ \cite[Remark 2.2.4]{Avr98}(c.f. \cite[2.5]{Pollitz/Sega:2024}) and thus we can construct a system of higher homotopies on $a_1,\ldots,a_r$ that vanish in degree $2$ and higher (see for instance, \cite[Remark 2.2.1]{Avr98}). An explicit formula for such a system of higher homotopies for Taylor resolutions was recently provided in the work of Sobieska \cite{Sob23}. We provide an analog for pivot resolutions in Theorem \ref{thm:system-pivot}. 

The structure of the paper is as follows. In \Cref{sec:background}, we set up most of the notation and discuss the tools that we use throughout the paper. In \Cref{pivot}, we define pivot complexes corresponding to any increasing sequence which is a subset of $\{1,2,\ldots,q\}$ where $q$ denotes the minimum number of generators of $I$ and prove the necessary and sufficient criterion for this complex to become a resolution by discussing the idea of gaps. We also define the Scarf-number of a monomial ideal $I$ and express the ranks of the free modules appearing in the pivot resolution in terms of these, which in turn give us a criterion to identify when pivot resolutions are minimal (see \Cref{cor:Betti-minimal-pivot}). In particular, in \Cref{thm:q-1-minimal-pivot} we provide a sufficient criterion for the existence of a minimal pivot resolution; however \Cref{ex2} shows that this is not a necessary criterion. We establish the DG-algebra structure of pivot resolutions in \Cref{sec:DG} and finally provide explicit formulae for a system of higher homotopies in \Cref{system}. For the convenience of the reader, we also provide various explicit computations involved in some of the statements from \Cref{system} in the \hyperref[app]{Appendix}.

\section*{Acknowledgements} The second author was partially supported by NSF grants DMS 1801285, 2101671, and 2001368. The third author was supported partially by Project No. 51006801 - American Mathematical Society-Simons Travel Grant.

We would like to thank Aryaman Maithani for some verifications with Sage regarding the heavy computations. We are also grateful to Srikanth Iyengar, Josh Pollitz and Daniel Erman for helpful discussions and for providing various references.

\section{Notations and Preliminaries}\label{sec:background}


Throughout this article, we will denote $[q]\coloneqq \{1,\dots, q\}$ for any positive integer $q$. We abuse notations by occasionally using $i$ to denote the set $\{i\}$ for any integer $i$. For any $A,B\subseteq [q]$ where the elements are ordered in an increasing sequence, set
    \[
    \sign(A,B)=\begin{cases}
        0 & \text{if } A\cap B\neq \emptyset,\\
        (-1)^{p(A,B)} & \text{if } A\cap B =\emptyset.
    \end{cases}
    \]
where $p(A,B)$ denotes the number of permutations needed to convert the combined sequence $A,B$ into an increasing sequence. 

We will denote $Q$ to be a polynomial ring over a field.  Let $I=(m_1,\dots, m_q)$ be a monomial ideal of $Q$.  For any $A\subseteq [q]$, set $m_A\coloneqq \lcm(\{m_i\colon i\in A\})$.

\subsection{A primer to discrete Morse theory}

Let $(R,\mathfrak{m})$ be an $\mathbb{N}$-graded ring with the irrelevant maximal ideal $\mathfrak{m}$. Let $M$ be a finitely generated graded $R$-module. A \emph{free resolution} of $M$ is a complex of free $R$-modules
\[
\mathcal{F}\colon \cdots \to R^{n_i}\xrightarrow{\partial_i} R^{n_{i-1}} \to \cdots \to R^{n_1} \xrightarrow{\partial_1} R^{n_0} \to 0
\]
such that $H_0(\mathcal{F}) = M$ and $H_i(\mathcal{F}) = 0$ for any $i>0$. Furthermore, if $\partial(F_i)\subseteq \mathfrak{m} F_{i-1}$ for any $i$, the free resolution $\mathcal{F}$ of $M$ is said to be \emph{minimal}, and then $\beta_i^R(M)\coloneqq n_i$ is called the \emph{$i$-th Betti number} of $M$ over $R$.

Let $\mathcal{P}(X)$ denote the power set of a set $X$. We recall the celebrated Taylor resolution of a monomial ideal $I=(m_1,\dots, m_q)$. Let $\mathcal{T}$ denote the complex
\[
\mathcal{T}: 0\to Q^{\binom{q}{q}} \to \cdots \to Q^{\binom{q}{1}} \to Q^{\binom{q}{0}} \to 0,
\]
where for each $0\leq i\leq q$, let $\{\epsilon_{\tau}\}_{\substack{\tau\in \mathcal{P}([q]),\\ |\tau|=i}}$ denote a basis of $Q^{\binom{q}{i}}$. We note that $Q\epsilon_\tau$ is a free $Q$-module with one generator in multidegree $a_\tau$, which is the exponent vector of $m_\tau$. We define the differentials to be
\[
\partial (\epsilon_{\tau}) = \sum_{\substack{\tau'\subset \tau\\
|\tau'|=|\tau|-1}} [\tau\colon \tau']  \frac{m_\tau}{m_{\tau'}}  \epsilon_{\tau'}
\]
for any $\tau\in \mathcal{P}([q])$. Here by assuming that $\tau=\{i_1,\dots, i_t\}$, where $i_1< \cdots < i_t$, and $\tau'=\tau \setminus \{i_j\}$ for some fixed index $j$, we define $[\tau\colon \tau']=(-1)^{j+1}$. We remark that one can also define the differentials  using the $\sign$ notation:
\[
\partial (\epsilon_{\tau}) = \sum_{j\in \tau} \sign(j,\tau\setminus j)  \frac{m_\tau}{m_{\tau\setminus j }}  \epsilon_{\tau\setminus j}.
\]

The complex $\mathcal{T}$ defined above is a free resolution of $Q/I$, and is more familiarly known as the \emph{Taylor resolution} of $Q/I$ (see  \cite{Tay66}).

We associate the Taylor resolution of $Q/I$ with a directed graph $G=(V,E)$ where
\begin{align*}
    V= \mathcal{P}([q]) \quad \text{and }  \quad 
    E= \{ (\tau\to \tau') \colon \tau \supset \tau' \text{ and } |\tau|=|\tau'|+1 \}.
\end{align*}
\begin{definition}A subset of directed edges $A\subseteq E$ is called a \emph{Morse matching}  if it satisfies the following conditions:
\begin{enumerate}
    \item No two edges in $A$ share a common vertex.
    \item For each edge $(\tau\to \tau')\in A$, we have $m_\tau=m_{\tau'}$.
    \item The directed graph $G^A$ has no directed cycle, where $G^A$ is the directed graph $G$ with edges in $A$ being reversed.
\end{enumerate}
\end{definition}
We note that Morse matchings are also called homogeneous acyclic matchings \cite{BW02} or acylic matchings \cite{Bat02}. We primarily use the following three results.

\begin{lemma}\cite[Lemma 3.2.3]{Bat02}\label{lem:subset-Morse}
    A subset of a Morse matching is a Morse matching.
\end{lemma}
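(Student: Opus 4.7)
Conditions (1) and (2) of the definition of a Morse matching pass immediately to any subset $A'\subseteq A$: the matching property (no two edges sharing a vertex) is inherited by subsets, and the equality $m_\tau=m_{\tau'}$ is a local condition on each edge. The content of the lemma is therefore condition (3), acyclicity of $G^{A'}$. My plan is to induct on $|A\setminus A'|$: the base case $A=A'$ is trivial, and after picking any $e\in A\setminus A'$ and setting $A''=A\setminus\{e\}$, it suffices to show that $A''$ itself is a Morse matching, because then $A'\subseteq A''$ with $|A''\setminus A'|<|A\setminus A'|$ and the induction hypothesis applies.

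The lemma thus reduces to the single-edge removal step: removing one edge $e=(\sigma,\sigma')$ from a Morse matching $A$ still yields an acyclic matching. Suppose for contradiction that $G^{A''}$ has a directed cycle $C$. Because $G^{A}$ and $G^{A''}$ agree on every edge other than $e$, any cycle of $G^{A''}$ that avoided $e$ would be a cycle of $G^{A}$, contradicting the acyclicity of $A$; hence $C$ traverses $e$, and since $e\notin A''$ it does so in its natural direction $\sigma\to\sigma'$. Removing this occurrence of $e$ from $C$ leaves a directed path $P$ from $\sigma'$ back to $\sigma$ in $G^{A''}$.

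The decisive structural input is the matching property of $A$ at the vertices $\sigma$ and $\sigma'$: since the only $A$-edge incident to either of them is $e$ itself, neither $\sigma$ nor $\sigma'$ is incident to any $A''$-edge. Consequently the first edge of $P$ (outgoing from $\sigma'$) and the last edge of $P$ (incoming to $\sigma$) cannot be reversed $A''$-edges and must be ``down'' edges in the natural direction, each decreasing cardinality by one. Classifying each edge of $P$ as ``up'' (a reversed $A''$-edge, raising cardinality by $1$) or ``down'' (a natural edge, lowering it by $1$), the total change along $P$ is $|\sigma|-|\sigma'|=+1$; the first and last edges contribute $-2$, so if $P$ has length $m$ then the middle $m-2$ edges contribute $+3$. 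Writing $u$ and $d$ for the number of up and down middle edges, this forces $u-d=3$. Finally, the matching property of $A''$ forbids two consecutive up-edges anywhere in $P$, since at their shared vertex two $A''$-edges would meet; in any sequence of $u$ ups and $d$ downs with no two consecutive ups one has $u\le d+1$, which combined with $u-d=3$ yields $3\le 1$---a contradiction.

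The main obstacle is this combinatorial step: one must recognize that the matching condition translates to a ``no two consecutive reversed edges'' constraint on $P$ and, through its application at the two endpoints $\sigma,\sigma'$, that the first and last edges of $P$ are forced to be natural. The cardinality count then closes in one line. Everything else---the inheritance of conditions (1) and (2) and the induction on $|A\setminus A'|$---is formal.
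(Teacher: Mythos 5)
The paper does not prove this lemma; it is cited from \cite{Bat02} with no argument supplied, so there is no in-paper proof to compare against. Your proof is correct and self-contained. Conditions (1) and (2) are indeed immediate for subsets, and the reduction to single-edge removal by induction on $|A\setminus A'|$ is sound since $A$ is finite. For the single-edge step: a directed cycle $C$ in $G^{A''}$ must traverse $e$ (any $e$-avoiding cycle would live in $G^A$), and does so in the natural direction $\sigma\to\sigma'$ since $e\notin A''$; the resulting path $P$ from $\sigma'$ to $\sigma$ has forced down-edges at both ends because the matching property of $A$ at $\sigma,\sigma'$ leaves them untouched by $A''$; and the cardinality count $u-d=3$ on the middle edges clashes with the bound $u\le d+1$ coming from the no-two-consecutive-up-edges constraint (two consecutive reversed edges would force two $A''$-edges to share a vertex). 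This is a genuine contradiction.

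For comparison, a standard route that avoids the induction and the path surgery is to observe first that any directed cycle in $G^{B}$ (for any matching $B$) must strictly alternate up- and down-edges, since it has no two consecutive up-edges and its cardinality changes sum to zero cyclically; then a cycle in $G^{A'}$ that is not already a cycle in $G^{A}$ must use some down-edge $(\tau\to\sigma)\in A\setminus A'$, whose two neighbouring edges in the alternating cycle are up-edges, i.e.\ $A'$-edges, each sharing an endpoint with $(\tau\to\sigma)$ --- contradicting the matching property of $A$. Both arguments turn on the same structural fact (the matching property of $A$ at the endpoints of a removed edge); yours trades the explicit alternation lemma for an elementary counting bound, which is a fine substitute.
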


\begin{definition}
    Let $A$ be a Morse matching. Subsets of $\mathcal{P}([q])$ that do not belong to any edge of $A$ are called \emph{$A$-critical}. We will simply say \emph{critical} when $A$ is clear from context.
\end{definition}

For any directed edge $(\tau\to \tau')$ in $G^A$, we define
\[
m(\tau,\tau')=\begin{cases}
    -[\tau'\colon \tau] & \text{if } (\tau'\to \tau)\in A\\
    [\tau\colon \tau'] & \text{otherwise}
\end{cases}
\]
For any \emph{gradient path} $\mathsf{P}$, i.e., a directed path 
\[
\mathsf{P}: \tau_1\to \tau_2\to \cdots \to \tau_t
\]
in the directed graph $G^A$, we define
\[
m(\mathsf{P})=m(\tau_1,\tau_2)\dots m(\tau_{t-1},\tau_t).
\]

Now we are ready to state the key theorem of discrete Morse theory.
\begin{theorem}\protect{\cite[Proposition 2.2, Proposition 3.1, Lemma 7.7]{BW02}}\label{thm:Morse}
    Let $A$ be a Morse matching. Let $\mathcal{F}_A$ be a complex such that for any index $i$, $(\mathcal{F}_A)_i$ is a free $Q$-module with a basis $\{\epsilon_\tau\}$ where $\tau$ ranges among the $A$-critical sets of cardinality $i$, and differentials are defined to be
    \[
    \partial(\epsilon_\tau) = \sum_{\substack{\tau' A\text{-critical}\\
    |\tau'|=|\tau|-1}} \sum_{\substack{\mathsf{P} \text{ gradient path}\\ \text{from } \tau \text{ to } \tau'  }} m(\mathsf{P}) \frac{m_\tau}{m_{\tau'}} \epsilon_{\tau'}.
    \]
    Then $\mathcal{F}_A$ is a free resolution of $Q/I$, called a \emph{Morse resolution} of $Q/I$.
\end{theorem}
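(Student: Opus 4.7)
The plan is to recognize this as the statement of algebraic discrete Morse theory for the (multi-graded) Taylor complex and prove it by iterated algebraic Gaussian elimination. First I would observe the source of the homogeneity condition: each edge $(\tau\to\tau')\in E$ records a specific component of the Taylor differential, namely multiplication by $[\tau\colon\tau']\cdot m_\tau/m_{\tau'}$, from the rank-one summand $Q\epsilon_\tau$ into $Q\epsilon_{\tau'}$. The matching hypothesis $m_\tau=m_{\tau'}$ on $A$ forces this component to be multiplication by $\pm 1$, hence an isomorphism of rank-one free $Q$-modules.

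The main step is the following general reduction lemma: if $\mathcal{C}$ is a complex of free modules and a single component of its differential $Q\epsilon_\tau\to Q\epsilon_{\tau'}$ is an isomorphism, one can perform a $Q$-linear change of basis that produces a quasi-isomorphic subcomplex in which both $\epsilon_\tau$ and $\epsilon_{\tau'}$ are removed; the induced differentials on the remaining basis elements pick up a correction term coming from the inverse of the isomorphism. Because the first condition on a Morse matching guarantees that no two matched edges share a vertex, the reductions for different edges of $A$ can be performed simultaneously rather than sequentially, producing a complex whose free modules are indexed precisely by the $A$-critical sets.

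I would then identify the resulting differential by diagram chasing. Starting from a critical $\tau$, one applies the Taylor differential, separates the critical summands (which remain) from the non-critical ones, and for each non-critical face one ``bounces back up'' along its matched edge (this is the step that inverts a $\pm 1$, accounting for the sign flip in the definition $m(\tau,\tau')=-[\tau'\colon\tau]$ when the edge is reversed) before reapplying the differential. Iterating this bouncing process corresponds exactly to summing over directed paths in $G^A$ from $\tau$ to a critical $\tau'$, and the intermediate monomial ratios telescope to give the net multiplier $m_\tau/m_{\tau'}$ along each such path; the product of elementary signs along the path is precisely $m(\mathsf{P})$. The third defining property of a Morse matching, acyclicity of $G^A$, is what guarantees that this iteration terminates in finitely many steps at a critical cell (otherwise one would produce an infinite path and, by pigeonhole on $\mathcal{P}([q])$, a directed cycle). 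Since Gaussian elimination preserves the homology and $\mathcal{T}$ is a resolution of $Q/I$, so is $\mathcal{F}_A$.

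The main obstacle is the careful verification of signs and the telescoping of monomial coefficients along gradient paths: each elementary reduction contributes both a sign from $[\,\cdot\,\colon\,\cdot\,]$ and a monomial factor $m_\bullet/m_\bullet$, and one must check that along an entire path these combine into the single clean factor $m(\mathsf{P})\cdot m_\tau/m_{\tau'}$ with the stated sign convention. Once this bookkeeping is done, $\partial^2=0$ is automatic because the reduction is a quasi-isomorphism of complexes.
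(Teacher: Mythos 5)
The paper does not prove this statement; it is quoted directly from Batzies--Welker \cite[Proposition 2.2, Proposition 3.1, Lemma 7.7]{BW02}, and the role it plays in the paper is purely as an imported tool (together with Theorem~\ref{thm:Lyubeznik} and Lemma~\ref{lem:subset-Morse}). So there is no in-text proof to compare against; what you have written is a sketch of the standard algebraic discrete Morse theory argument, in the spirit of Sk\"oldberg and J\"ollenbeck--Welker, and that is indeed the right framework.

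Your sketch has the correct skeleton but glosses over the one place where all three hypotheses on a Morse matching actually have to be used together. You invoke the vertex-disjointness (condition (1)) to justify that ``the reductions for different edges of $A$ can be performed simultaneously,'' and you invoke acyclicity (condition (3)) only afterwards, to show that the path-sum terminates. This splits the burden incorrectly. Disjointness alone does not let you eliminate all matched pairs in one stroke: after cancelling one matched pair $(\tau\to\tau')$, the differential entry between a \emph{different} matched pair $(\sigma\to\sigma')$ acquires a correction term supported on gradient paths through $\tau$ and $\tau'$, and without acyclicity one cannot guarantee that entry remains a unit, so one cannot keep going. The standard fix is to use acyclicity \emph{before} cancelling anything: it provides a linear extension of the ``reachability'' preorder on vertices, and one eliminates matched pairs sequentially in that order; an elementary check shows that in this order the entry about to be inverted has received no correction, and the iterated elimination yields exactly the path-sum formula with the sign $m(\mathsf{P})$ and the telescoped monomial $m_\tau/m_{\tau'}$. (Equivalently one may package the whole thing as a single homotopy equivalence via the homological perturbation lemma; then acyclicity is what makes the perturbation series finite.) With that reorganisation your argument is complete: $\partial^2=0$ and exactness do then follow for free from the quasi-isomorphism with the Taylor resolution, as you say. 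So the gap is not a wrong idea, but a misattribution of which hypothesis does which job, and the phrase ``simultaneously rather than sequentially'' should be replaced by ``sequentially in an order compatible with the acyclic structure'' (or by an explicit perturbation-lemma formulation) to make the sketch into a proof.
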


In \cite{Ly88}, Lyubeznik considered special subcomplexes of the Taylor resolution which turned out to be free resolutions of $Q/I$. These are now known as \emph{Lyubeznik resolutions}. Later, Batzies and Welker \cite{BW02} showed that these are in fact special cases of Morse resolutions. We will use Batzies and Welker's discovery as the definition of Lyubeznik resolutions.

\begin{theorem}\protect{\cite[Theorem 3.2]{BW02}}\label{thm:Lyubeznik}
    Let $m_{i_1}\succ m_{i_2} \succ \cdots \succ m_{i_q}$ be a total order on $\mingens(I)$. For any $\tau\in \mathcal{P}([q])$, set
    \[
    L(\tau) = \sup \{j\in [q]\colon m_{i_j}\mid \lcm( \{ m_{i_1},\dots, m_{i_{j-1}} \} \cap \tau )  \}. 
    \]
    Then 
    \[
    \{(\tau\cup L(\tau) \to \tau \setminus L(\tau)  )\colon \tau \in \mathcal{P}([q]) \text{ such that } L(\tau)\neq -\infty  \}
    \]
    is a Morse matching. The corresponding Morse resolution is called the \emph{Lyubeznik resolution} of $Q/I$ with respect to $(\succ)$.
\end{theorem}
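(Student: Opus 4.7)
The plan is to verify the three defining conditions of a Morse matching directly, with the crux being a single invariance property of $L$ under toggling the element it selects.

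First I would establish the \emph{invariance lemma}: if $L(\tau)=j \neq -\infty$, then $L(\tau \cup \{i_j\}) = L(\tau \setminus \{i_j\}) = j$. For each $k \in [q]$, write $C_k(\sigma)$ for the condition ``$m_{i_k}$ divides $\lcm(\{m_{i_1},\ldots,m_{i_{k-1}}\} \cap \{m_i : i \in \sigma\})$''. For $k \le j$, $C_k(\sigma)$ depends only on $\sigma \cap \{i_1,\ldots,i_{k-1}\}$, which is unchanged by toggling $i_j$. For $k > j$, since $m_{i_j}$ already divides $\lcm(\{m_{i_1},\ldots,m_{i_{j-1}}\} \cap \{m_i : i \in \tau\})$, and the latter divides the lcm tested in $C_k$, including or excluding $m_{i_j}$ in that lcm does not change it. Hence the set of indices $k$ for which $C_k$ holds is the same for $\tau \cup \{i_j\}$ and $\tau \setminus \{i_j\}$, so their suprema coincide and equal $j$.

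Condition~(2) is then immediate: for the edge with $\sigma = \tau \cup \{i_{L(\tau)}\}$ and $\sigma' = \tau \setminus \{i_{L(\tau)}\}$, the defining relation gives $m_{i_{L(\tau)}} \mid m_{\sigma'}$, so $m_\sigma = \lcm(m_{\sigma'}, m_{i_{L(\tau)}}) = m_{\sigma'}$. Condition~(1) follows from the invariance lemma: the map $\sigma \mapsto \sigma \triangle \{i_{L(\sigma)}\}$ is a well-defined involution on $\{\sigma : L(\sigma) \neq -\infty\}$, and the edges of $A$ are precisely the orbits of this involution, so no vertex sits in two distinct edges.

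Condition~(3), acyclicity of $G^A$, is the main obstacle. The plan is by contradiction: assume a directed cycle $\tau_1 \to \tau_2 \to \cdots \to \tau_t \to \tau_1$ in $G^A$. Each reversed matched edge increases cardinality by adding $i_{L(\sigma)}$ (for the smaller endpoint $\sigma$), while each unmatched original edge removes some element. Let $j^\ast$ be the maximum value of $L$ attained at an ``up'' step of the cycle; then $i_{j^\ast}$ must also leave the set along some unmatched down-move $\tau_s \to \tau_s \setminus \{i_{j^\ast}\}$, forcing $L(\tau_s) \ne j^\ast$ and, by maximality of $j^\ast$, in fact $L(\tau_s) < j^\ast$. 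Combined with the invariance lemma, this places tight restrictions on which indices can be toggled between the up-move that introduces $i_{j^\ast}$ and the down-move that removes it, and chasing those toggles together with the uniqueness from~(1) ultimately forces incompatible values of $L$ at a common vertex. Making this combinatorial bookkeeping rigorous is the step I expect to be hardest; alternatively, one can invoke the general acyclicity meta-theorem for ``max-type'' matchings in \cite{BW02,Bat02}, which is the form in which the result is quoted here.
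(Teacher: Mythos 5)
The paper states this as a cited result from \cite[Theorem 3.2]{BW02} and provides no in-paper proof, so there is no author argument to compare against; I will assess your proposal on its own merits. Your invariance lemma and the verifications of conditions (1) and (2) are correct. The key observation, that toggling $i_{L(\tau)}$ changes the truth of no $C_k$ (for $k\le L(\tau)$ because $i_{L(\tau)}$ is not among $i_1,\dots,i_{k-1}$, and for $k>L(\tau)$ because $m_{i_{L(\tau)}}$ already divides the relevant lcm), simultaneously yields the matching property and the equality of lcms on matched pairs.

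The genuine gap is condition (3), which you openly leave unfinished, and the heuristic already leaks: from $L(\tau_s)\neq j^\ast$ you conclude $L(\tau_s)<j^\ast$ ``by maximality of $j^\ast$,'' but $j^\ast$ was taken as a maximum over up-step vertices and $\tau_s$ need not be one. The clean missing ingredient is monotonicity of $L$ under inclusion: if $\sigma'\subseteq\sigma$, then in your notation $\{k\colon C_k(\sigma')\text{ holds}\}\subseteq\{k\colon C_k(\sigma)\text{ holds}\}$, since shrinking the set can only shrink the lcm in each test, so $L(\sigma')\le L(\sigma)$. Thus every down-move in $G^A$ weakly decreases $L$, and every up-move preserves it by your invariance lemma, so $L$ is constant, say $\equiv j$, along any directed cycle. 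If $j=-\infty$ there are no up-moves at all, cardinality strictly decreases, and the cycle cannot close. If $j\neq-\infty$, every up-move adds $i_j$; but no down-move in $G^A$ from a vertex $\tau$ in the cycle can remove $i_j$: if $i_j\in\tau$, then since $L(\tau)=j$ the edge $\tau\to\tau\setminus\{i_j\}$ is exactly the $A$-edge, hence reversed in $G^A$ and not available as a down-move. So the multiset of indices added around the cycle contains $i_j$ while the multiset of indices removed does not, contradicting that the cycle closes. Inserting this monotonicity step completes your argument cleanly, without the ``max'' device and without deferring to the meta-theorem in \cite{BW02,Bat02}.
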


We remark that $L(\tau)$ depends on the total order $(\succ)$, but in this paper $(\succ)$ is always specified beforehand, so we shorten our notation.

\subsection{The Eisenbud-Shamash construction} \label{ES}
Let $Q$ be a polynomial ring over a field, $\mathfrak{a}$ an ideal of $Q$ generated by a regular sequence $a_1,\dots, a_r$, and $R=Q/\mathfrak{a}$. We recall the Eisenbud-Shamash construction of a free resolution over $R$ from one over $Q$ from \cite{Sha69,Ei80} below, and refer to \cite{Eisenbud/Peeva:2016} and \cite{Avr98} for a more detailed treatment.

Let $\mathcal{F}$ be a complex of free $Q$-modules and $\partial$ denote the differentials. For $a\in \mathbb{Z}$, we denote the shifted complex $\mathcal{F}[-a]$ to be the complex such that $\mathcal{F}[-a]_{i}=\mathcal{F}_{a+i}$. Let $\mathbb{N}$ denote the set of natural numbers, including $\{0\}$. We will use $\mathbf{e}_i$ to denote the $i$-th standard basis vector in $\mathbb{N}^r$ for $i\in [r]$, and $\mathbf{0}$ to denote the zero vector. We denote $|\mathbf{u}|= \sum_{i=1}^r u_i$ for any vector $\mathbf{u}$ in $\mathbb{N}^r$. A \emph{system of higher homotopies} $\sigma$ for $a_1,\dots, a_r$ on $\mathcal{F}$ is a collection of maps
\[
\sigma_{\mathbf{u}}\colon \mathcal{F} \to \mathcal{F}[-2|\mathbf{u}|+1]
\]
that satisfies the following three conditions:
\begin{enumerate}
    \item $\sigma_{\mathbf{0}} = \partial$;
    \item for each $i\in [r]$, the map $\sigma_{\mathbf{0}} \sigma_{\mathbf{e}_i} + \sigma_{\mathbf{e}_i} \sigma_{\mathbf{0}}$ is the multiplication by $a_i$ on $\mathcal{F}$;
    \item if $|\mathbf{u}|\geq 2$, then
    \[
    \sum_{\mathbf{b}+\mathbf{b'}=\mathbf{u}} \sigma_{\mathbf{b}} \sigma_{\mathbf{b'}} = 0.
    \]
\end{enumerate}

We remark that if we set $\sigma_{\mathbf{u}}=0$ for any $|\mathbf{u}|\geq 2$, then for the third condition, it is sufficient to check that $\sigma^2_{\mathbf{e}_j}=0$ for all $j\in [r]$ and 
\[
\sigma_{\mathbf{e}_i} \sigma_{\mathbf{e}_j} + \sigma_{\mathbf{e}_j} \sigma_{\mathbf{e}_i} = 0
\]
for any $i,j\in [r]$ where $i\neq j$.

Next, following the exposition in \cite[Construction 4.1.3]{Eisenbud/Peeva:2016}, we set $\tilde{Q}=Q[t_1,\ldots, t_r]$ with each $t_i$ having degree $-2$. Then $D_{\tilde{Q}}=\operatorname{Hom}_{\text{graded $Q$-modules}}(\tilde{Q},Q)$ is the divided power algebra over $Q$ on the degree $2$ dual variables $y_1,...,y_r$ corresponding to $t_1,...,t_r$ (we refer the reader to \cite[A2]{Eisenbud:2013} for further details on divided power algebras). In other words, we can write $D_{\tilde{Q}}=\oplus Qy_1^{(i_1)}\ldots y_r^{(i_r)}$. The $y_1^{(i_1)}\ldots y_r^{(i_r)}$ are the \textit{divided monomials} that constitute the dual basis to the monomial basis of the polynomial ring $\tilde{Q}$. Note that $D_{\tilde{Q}}$ is a graded module over $\tilde{Q}$ via the action \[t_jy_j^{(i)}=y_j^{(i-1)}.\]Now we assume that $\sigma$ is a system of higher homotopies on $\mathcal{F}$. We set
\[
\Phi (\mathcal{F}) \coloneqq D_{\tilde{Q}}\otimes \mathcal{F} \otimes R
\]
to be a complex of free $R$-modules, with the differential
\[
\delta := \sum t^{\mathbf{u}}\otimes \sigma_{\mathbf{u}}\otimes R
\] where $t^{\mathbf{u}}$ denotes $t_1^{u_1}t_2^{u_2}\ldots t_r^{u_r}$.
This complex $(\Phi(\mathcal{F}),\delta)$ of graded free $R$-modules is called the Eisenbud-Shamash construction. For an explicit example, we refer the reader to \cite[Example 2.6]{Sob23}. The next statement is a celebrated theorem of Eisenbud \cite{Ei80} and Shamash \cite{Sha69} (in the case $r=1$). 

\begin{theorem}\cite[Propositions 3.4.2 and 4.1.4]{Eisenbud/Peeva:2016}\label{thm:ES}
    If $\mathcal{F}$ is a free resolution of a finitely generated $R$-module $N$ over $Q$, then there exists a system of higher homotopies $\sigma$ for the regular sequence $a_1,\ldots, a_r$ and $\Phi(\mathcal{F})$ is a free resolution of $N$ over $R$.
\end{theorem}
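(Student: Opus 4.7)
The plan is to handle the two assertions separately: construct a system of higher homotopies $\sigma$ on $\mathcal{F}$, then show $(\Phi(\mathcal{F}),\delta)$ is a free $R$-resolution of $N$. For the existence of $\sigma$, proceed by induction on $|\mathbf{u}|$. The base case $\sigma_{\mathbf{0}}=\partial$ is given. For $|\mathbf{u}|=1$ the requirement is that multiplication by $a_i$ on $\mathcal{F}$ be null-homotopic; this holds because $N$ is an $R$-module, so $a_iN=0$, so multiplication by $a_i$ on $\mathcal{F}$ is a chain map lifting the zero map on $N=H_0(\mathcal{F})$, and the comparison theorem for projective resolutions then produces a null-homotopy $\sigma_{\mathbf{e}_i}$. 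For $|\mathbf{u}|\geq 2$, assuming $\sigma_{\mathbf{b}}$ has been built for all $|\mathbf{b}|<|\mathbf{u}|$, set
\[
\alpha_{\mathbf{u}}=-\sum_{\substack{\mathbf{b}+\mathbf{b}'=\mathbf{u}\\\mathbf{b},\mathbf{b}'\neq\mathbf{0}}}\sigma_{\mathbf{b}}\sigma_{\mathbf{b}'}.
\]
A routine bookkeeping using the lower-order relations shows that $\alpha_{\mathbf{u}}$ graded-commutes with $\partial$, so it is a chain map of strictly positive homological degree. Since $H_{>0}(\mathcal{F})=0$, it induces zero on homology and is null-homotopic; any such null-homotopy can serve as $\sigma_{\mathbf{u}}$.

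The identity $\delta^2=0$ is then immediate by expansion: the coefficient of $y^{(\mathbf{v}-\mathbf{w})}$ in $\delta^2(y^{(\mathbf{v})}\otimes f\otimes 1)$ is $\sum_{\mathbf{b}+\mathbf{b}'=\mathbf{w}}\sigma_{\mathbf{b}}\sigma_{\mathbf{b}'}(f)\otimes 1$, which vanishes for $\mathbf{w}=\mathbf{0}$ by $\partial^2=0$, equals $a_i\cdot\mathrm{id}$ for $\mathbf{w}=\mathbf{e}_i$ (annihilated by $\otimes R$ since $a_i\in\mathfrak{a}$), and vanishes for $|\mathbf{w}|\geq 2$ by axiom~(3). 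Each $\Phi(\mathcal{F})_n$ is a finite free $R$-module because $D_{\tilde{Q}}$ has only finitely many divided monomials in each degree.

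For exactness, I would use the increasing filtration by divided power degree, $F_p=\bigoplus_{|\mathbf{v}|\leq p}Ry^{(\mathbf{v})}\otimes\mathcal{F}\otimes R$, which $\delta$ preserves since $t^{\mathbf{u}}$ lowers divided power degree by $|\mathbf{u}|$. On $\mathrm{gr}^F$ only $\sigma_{\mathbf{0}}$ survives, yielding $E^1=\bigoplus_{\mathbf{v}}Ry^{(\mathbf{v})}\otimes\mathrm{Tor}_*^Q(N,R)$. Because $a_1,\ldots,a_r$ is $Q$-regular and annihilates $N$, the Koszul resolution of $R$ over $Q$ identifies $\mathrm{Tor}_*^Q(N,R)\cong\Lambda_R^*(R^r)\otimes_R N$ with zero differential. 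The differential $d^1$ is induced by the $\sigma_{\mathbf{e}_i}$, and the key calculation is to identify it as the contraction pairing divided power lowering against the Koszul exterior generators dual to $a_1,\ldots,a_r$; this turns $(E^1,d^1)$ into $N\otimes_R\bigl(D_R\otimes_R\Lambda_R^*(R^r)\bigr)$ with the standard Koszul–divided-power differential, a classical acyclic complex resolving $R$. Thus the spectral sequence collapses to $N$ in homological degree zero, proving $\Phi(\mathcal{F})$ is a free $R$-resolution of $N$. The main obstacle is this last step—precisely identifying the higher differentials of the spectral sequence as Koszul contractions and verifying acyclicity of the resulting bicomplex—which is essentially a Koszul-duality computation relying on the regularity of $a_1,\ldots,a_r$.
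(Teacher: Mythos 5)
The paper does not prove this theorem; it is stated with a citation to Eisenbud--Peeva (Propositions 3.4.2 and 4.1.4), so there is no in-paper argument to compare against. Your blind proof follows the standard route that the cited source takes: build the $\sigma_{\mathbf{u}}$ by induction using the comparison theorem (the correction term $\alpha_{\mathbf{u}}$ is a chain endomorphism of positive homological degree, hence null-homotopic on the acyclic resolution $\mathcal{F}$; the bookkeeping that $\alpha_{\mathbf{u}}$ commutes with $\partial$ works because the cubic sums $\sum\sigma_{\mathbf{b}}\sigma_{\mathbf{c}}\sigma_{\mathbf{c}'}$ and $\sum\sigma_{\mathbf{c}}\sigma_{\mathbf{c}'}\sigma_{\mathbf{b}'}$ coincide and the $a_i$-terms cancel since $a_i$ is central); verify $\delta^2=0$ by expanding in $t^{\mathbf{u}}$ and using the three axioms, the $|\mathbf{w}|=1$ contributions vanishing after $\otimes R$; and prove acyclicity from the degreewise-finite, exhaustive filtration by divided power degree.

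The only point that needs firming up is exactly what you flag: the identification of $d^1$. After noting $\mathrm{Tor}^Q_*(N,R)\cong N\otimes_R\Lambda^*_R(R^r)$ with zero differential (because $\mathfrak{a}N=0$), one must check that $\sigma_{\mathbf{e}_i}\otimes R$ induces exterior multiplication by the class $\xi_i\in\mathrm{Tor}^Q_1(R,R)$. One clean way to do this is via the intermediary $\mathcal{F}\otimes_Q K(\underline{a};Q)$: there both $\sigma_{\mathbf{e}_i}\otimes 1$ and $1\otimes(e_i\wedge{-})$ are homotopies for multiplication by $a_i$, and two homotopies for the same map on a complex quasi-isomorphic to $N$ differ by a homotopy, so they induce the same operator on $\mathrm{Tor}$. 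Then $(E^1,d^1)$ is the divided-power Koszul complex $N\otimes_R\bigl(D_R\otimes_R\Lambda^*_R(R^r)\bigr)$, which is split acyclic in positive degrees with $H_0=N$, and the spectral sequence collapses. So the proof is sound modulo the step you yourself identify, and the approach matches the cited reference.
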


As we proceed through the paper, in every section we will set up additional notations and conventions whenever necessary and also make all the hypotheses explicit for convenience.

\section{Pivot complexes and resolutions}\label{pivot}

In this section we will introduce and study pivot resolutions. By construction, the Taylor resolution of $Q/I$ can be associated with the power set $\mathcal{P}([q])$, and any subset $\Omega \subseteq \mathcal{P}([q])$ such that $\Omega$ is closed under taking subsets can be associated to a subcomplex of the Taylor resolution in a canonical way, in which case we denote the corresponding subcomplex by $\mathcal{T}_\Omega$. 

\begin{definition}
    We call $\mathcal{T}_\Omega$ a \emph{pivot complex} of $Q/I$ if either $\Omega = \mathcal{P}([q])$ or there exist $i_1,\dots, i_l$ for some integer $l\geq 2$ such that
    \[
    \Omega=\{\sigma \in \mathcal{P}([q]) \colon \sigma \nsupseteq \{i_1,\dots, i_l\} \}.
    \]
    Thus we will use $\mathcal{T}_{i_1,\dots, i_l}$ to denote this pivot complex.
    Furthermore, we call $\mathcal{T}_{i_1,\dots, i_l}$ a \emph{pivot resolution} of $Q/I$ if it is additionally a resolution.
\end{definition}

\begin{example}
    Let $Q=\mathbb{Q}[w,x,y,z]$ and $I=(wx,xy,yz)$. Set $m_1\coloneqq wx, m_2\coloneqq xy$ and $m_3\coloneqq yz$. The Taylor resolution $\mathcal{T}$ and pivot complex $\mathcal{T}_{1,2}$ of $Q/I$ are
    \[
    \mathcal{T}: 0\to Q\epsilon_{123} \xrightarrow{\begin{pmatrix}
        w\\
        -1\\
        z
    \end{pmatrix}} \begin{matrix}
        Q\epsilon_{23}\\
        \oplus\\
        Q\epsilon_{13}\\
        \oplus\\
        Q\epsilon_{12}
    \end{matrix} \xrightarrow{\begin{pmatrix}
        0&yz&y\\
        z&0&-w\\
        -x&-wx&0
    \end{pmatrix}} \begin{matrix}
        Q\epsilon_{1}\\
        \oplus\\
        Q\epsilon_{2}\\
        \oplus\\
        Q\epsilon_{3}
    \end{matrix}
    \xrightarrow{\begin{pmatrix}
        wx & xy & yz
    \end{pmatrix}} Q\epsilon_{\emptyset}\to 0
    \]
    and
    \[
    \mathcal{T}_{1,2}: 0\to \begin{matrix}
        Q\epsilon_{23}\\
        \oplus\\
        Q\epsilon_{13}\\
    \end{matrix} \xrightarrow{\begin{pmatrix}
        0&yz\\
        z&0\\
        -x&-wx
    \end{pmatrix}} \begin{matrix}
        Q\epsilon_{1}\\
        \oplus\\
        Q\epsilon_{2}\\
        \oplus\\
        Q\epsilon_{3}
    \end{matrix}
    \xrightarrow{\begin{pmatrix}
        wx & xy & yz
    \end{pmatrix}} Q\epsilon_{\emptyset}\to 0,
    \]
    respectively. We observe that $\begin{pmatrix}
        y\\-x\\0
    \end{pmatrix}$ is in $\ker \partial_1$, but not in $\image \partial_2$. In other words, $H_1(\mathcal{T}_{1,2})\neq 0$ and in particular, $\mathcal{T}_{1,2}$ is not a resolution of $Q/I$.
\end{example}

Given an index set $\tau\subseteq [q]$ and $h\not \in \tau$  with $h\in [q]$, we say that $h$ is a \emph{gap} of $\tau$ if $m_h\mid  m_{\tau}$. It is clear that $h$ is a gap of $\tau$ if and only if $m_\tau = m_{\tau\cup h}$. Using this notion, we determine when pivot complexes are resolutions in the following theorem.

\begin{theorem}\label{thm:pivot-when-resolution}
    The pivot complex $\mathcal{T}_{i_1,\dots, i_l}$, where
    $l\geq 2$ and $1\leq i_1<\cdots < i_l\leq q$, is a resolution if and only if $\{i_1,\dots, i_l\}$ has a gap.
\end{theorem}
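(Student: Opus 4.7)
My plan is to prove the two directions separately: the ``if'' direction via discrete Morse theory, producing $\mathcal{T}_{i_1,\dots,i_l}$ as a Morse resolution; and the ``only if'' direction via a direct multigraded homology calculation in the critical multidegree $m_\tau$, where I set $\tau \coloneqq \{i_1,\dots,i_l\}$.

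For the ``if'' direction, suppose $h \in [q] \setminus \tau$ is a gap, so $m_h \mid m_\tau$. I would introduce the matching
\[
A \coloneqq \{\,(\sigma \cup h \to \sigma) : \sigma \supseteq \tau \text{ and } h \notin \sigma\,\}
\]
and verify the three Morse axioms. The matching property is immediate from the bijection $\sigma \leftrightarrow \sigma \cup h$. The homogeneity condition holds because $\sigma \supseteq \tau$ forces $m_h \mid m_\tau \mid m_\sigma$, whence $m_\sigma = m_{\sigma \cup h}$. For the acyclicity of $G^A$, which I expect to be the most delicate step, I would use the auxiliary function $\pi(\sigma) \coloneqq \sigma \setminus \{h\}$: every non-$A$ edge of $G^A$ either preserves $\pi$ (when it removes $h$) or strictly shrinks $\pi$ (when it removes some $j \neq h$), and every $A$-reversed edge preserves $\pi$. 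Any closed cycle would therefore be confined to a single fiber $\{\sigma^*, \sigma^* \cup h\}$ of $\pi$, but the two candidate edges in such a fiber require $\sigma^* \supseteq \tau$ and $\sigma^* \not\supseteq \tau$ respectively, a contradiction.

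The critical sets of this matching are exactly $\{\sigma \in \mathcal{P}([q]) : \sigma \not\supseteq \tau\}$, which is precisely the indexing set of $\mathcal{T}_{i_1,\dots,i_l}$. Since every subset $\sigma \setminus \{j\}$ of a critical $\sigma$ is still critical, and no $A$-reversed edge begins at a critical cell, the only gradient paths between critical cells in consecutive homological degrees have length one and are direct Taylor edges; moreover $m(\sigma \to \sigma \setminus \{j\}) = [\sigma : \sigma \setminus \{j\}]$ matches the Taylor sign exactly. Thus the Morse resolution coincides with $\mathcal{T}_{i_1,\dots,i_l}$ as a complex, and \cref{thm:Morse} yields that $\mathcal{T}_{i_1,\dots,i_l}$ is a resolution.

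For the ``only if'' direction, suppose $\tau$ has no gap. Then $\{\,j \in [q] : m_j \mid m_\tau\,\} = \tau$, so the unique size-$l$ subset $\sigma \subseteq [q]$ with $m_\sigma \mid m_\tau$ is $\sigma = \tau$ itself. The Taylor boundary
\[
\partial(\epsilon_\tau) = \sum_{j \in \tau} \sign(j, \tau \setminus j)\, \frac{m_\tau}{m_{\tau \setminus j}}\, \epsilon_{\tau \setminus j}
\]
lies entirely in $\mathcal{T}_{i_1,\dots,i_l}$ (each proper subset $\tau \setminus \{j\}$ does not contain $\tau$) and is a nonzero cycle. However, the multidegree-$m_\tau$ component of $(\mathcal{T}_{i_1,\dots,i_l})_l$ is zero by the observation above, so $\partial(\epsilon_\tau)$ cannot be a boundary in the pivot complex. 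Consequently $H_{l-1}(\mathcal{T}_{i_1,\dots,i_l}) \neq 0$ with $l-1 \geq 1$, and $\mathcal{T}_{i_1,\dots,i_l}$ fails to be a resolution. Apart from the acyclicity verification noted above, the remaining steps are routine sign and multidegree bookkeeping combined with \cref{thm:Morse}.
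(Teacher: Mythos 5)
Your proof is correct and takes a genuinely different route in both directions. For the ``only if'' direction, the paper argues structurally: it invokes a theorem of Roberts that any subcomplex of the Taylor resolution that is itself a resolution must be obtained (up to isomorphism) by stripping off trivial summands $0 \to Q\epsilon_{\tau\cup h} \xrightarrow{1} Q\epsilon_\tau \to 0$ with $m_\tau = m_{\tau\cup h}$, and then deduces the existence of a gap from the fact that $\epsilon_{[l]}$ must participate in such a cancellation. Your argument is more elementary and self-contained: you exhibit the Taylor boundary $\partial(\epsilon_\tau)$ explicitly as a nonzero $(l-1)$-cycle in multidegree $m_\tau$ and observe that the no-gap hypothesis forces the degree-$l$, multidegree-$m_\tau$ strand of the pivot complex to be zero, so the cycle cannot bound. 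For the ``if'' direction, the paper produces the same matching you do, but certifies it as a Morse matching by realizing it as a subset of the Morse matching underlying a Lyubeznik resolution (for a suitable total order) and invoking closure of Morse matchings under taking subsets; you instead verify the three Morse axioms from scratch, with acyclicity handled by the monotone auxiliary function $\pi(\sigma) = \sigma\setminus\{h\}$, and then observe directly that the critical cells and their length-one gradient paths reproduce the pivot complex with its Taylor signs. What the paper's route buys is brevity by leveraging known machinery, and it also yields as a free byproduct the observation in \cref{rem:Lyubeznik-pivot-Taylor} that a Lyubeznik resolution sits canonically inside every pivot resolution; what your route buys is a self-contained argument that makes the acyclicity and the identification of the Morse differential with the restricted Taylor differential fully transparent without citing external results on Lyubeznik matchings or on minimization of free resolutions.
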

\begin{proof}
    Up to relabeling, we can assume that $i_j=j$ for any $1\leq j\leq l$. In particular, this means $\{i_1,\dots, i_l\}=[l]$.
    
    Assume that the pivot complex $\mathcal{T}_{1,\dots, l}$ is a resolution. Since $\mathcal{T}_{1,\dots, l}$ is a subcomplex of the Taylor resolution $\mathcal{T}$, by following the proof of \cite[Theorem 1.6]{Roberts:1980}, up to isomorphism, $\mathcal{T}_{1,\dots, l}$ can be obtained from $\mathcal{T}$ after ``removing" summands of the form
    \[
    0\to Q\epsilon_{\tau\cup h} \xrightarrow{1} Q\epsilon_{\tau}\to 0
    \]
     for some $\tau\in \mathcal{P}([q])$ and $h\notin \tau$ such that these two basis elements have the same multidegree $a_{\tau}=a_{\tau \cup h}$, and hence $m_{\tau}=m_{\tau \cup h}$. By definition of $\mathcal{T}_{1,\dots, l}$, the copy $Q\epsilon_{[l]}$ of the Taylor resolution no longer exists in $\mathcal{T}_{1,\dots, l}$, and all copies of $Q$ in homological degree $l-1$ remain in $\mathcal{T}_{1,\dots, l}$. Thus there exists a $\tau\in \mathcal{P}([q])$ such that $|\tau|=l+1$ and $m_{[l]}=m_{\tau}$. Then any $h\in \tau \setminus [l]$ satisfies our required criterion.

    Conversely, assume that $h\in [q]$ is a gap of $[l]$. Set
    \[
    A\coloneqq \{\tau\cup h\to \tau \setminus h \colon \tau\supseteq [l] \}.
    \]
    Consider a total order on $\mingens(I)$ such that $m_i\succ m_h$ for any index $i\neq h$. Then it is straightforward that $L(\tau)=h$ for any $\tau \supseteq [l]$. Thus $A$ is a subset of the Morse matching that induces a Lyubeznik resolution in Theorem \ref{thm:Lyubeznik}, and therefore is a Morse matching itself by Lemma \ref{lem:subset-Morse}. The basis elements of the Morse resolution $\mathcal{F}_A$ as in \cref{thm:Morse}, can be identified with basis elements of the pivot complex $\mathcal{T}_{1,\dots, l}$ canonically. Finally, since the set of $A$-critical sets in this case is closed under taking subsets, the differentials of $\mathcal{F}_A$ are exactly the restrictions of those of the Taylor resolution $\mathcal{T}$ by \cite[Proposition 5.2]{CK24}. Thus, $\mathcal{F}_A$ and $\mathcal{T}_{1,\dots, l}$ are the same. In particular, $\mathcal{T}_{1,\dots, l}$ is a free resolution of $Q/I$, as required.
\end{proof}

\begin{remark}\label{rem:Lyubeznik-pivot-Taylor}
    In the proof above, we showed that any pivot resolution is induced from a Morse matching that is a subset of the Morse matching that induces a Lyubeznik resolution. Since their differentials are restrictions of the Taylor differentials, for any pivot resolution, we can always find a Lyubeznik resolution that is its subcomplex canonically. Thus
    \[
    \text{Lyubeznik resolution } \subseteq \text{Pivot resolution } \subseteq \text{Taylor resolution}
    \]
    as complexes, and the embeddings are canonical.
\end{remark}

\begin{example}
    Let $I=(x_1^2, x_2^2, x_3^2, x_1x_2x_3)$. Set $m_1, m_2, m_3,$ and $m_4$ to be the four generators accordingly. Then
    \[
    \mathcal{T}_{1,2,3}: 0\to Q^3 \to Q^6 \to Q^4\to Q
    \]
    is a resolution of $Q/I$ since $4$ is a gap of $\{1,2,3\}$. On the other hand, 
    \[
    \mathcal{T}_{1,2}: 0\to Q^2 \to Q^5 \to Q^4\to Q
    \]
    is not a resolution of $Q/I$ since $\{1,2\}$ has no gap.  
\end{example}

By definition, the Taylor resolution is a pivot resolution. In general, this may be the only pivot resolution. In fact, by the preceding theorem, it is clear that this happens exactly when the Taylor resolution is minimal. This means that barring the aforementioned case, there is always a pivot resolution that is shorter than the Taylor resolution. It is natural to ask which pivot resolution is the ``smallest" for a fixed monomial ideal $I$. For this purpose, we introduce a new notion.

\begin{definition}\label{def:scarf-number}
    Let $I$ be a monomial ideal of $S$. We define the \emph{Scarf number} of $I$, denoted by $\scarfnumber(I)$, to be 
    \[
    \inf\{t\in \mathbb{N}\colon \exists \tau, \tau' \in \mathcal{P}([q]) \text{ such that } \tau\neq \tau', \  |\tau|=t \text{ and }  m_\tau= m_{\tau'}   \}.
    \]
\end{definition}

A \emph{Scarf index set} of $I$ is a subset $\tau\subseteq [q]$ such that $m_\tau=m_{\tau'}$ for some $\tau'\in \mathcal{P}([q])$ implies $\tau=\tau'$. Thus the Scarf number of $I$ is simply the smallest possible cardinality of a non-Scarf index set of $I$. This definition is inspired by the Scarf complex introduced in \cite{BPS98}. From the definition, it is easy to see that either $2\leq \scarfnumber(I)\leq \mu(I)-1$ or $\scarfnumber(I)=\infty$, and the Taylor resolution of $Q/I$ is minimal if and only if $\scarfnumber(I)=\infty$. Furthermore, in this case, by definition, there is no other pivot resolutions. 

\begin{example}\label{exa:scarf-number}
    Let $Q=\mathbb{Q}[u,w,x,y,z]$ and $I=(u,wx,xy,yz)$. We will show that $\scarfnumber(I)=2$. We already have $\scarfnumber(I)\geq 2$. Thus it suffices to find an index set of $I$ that has a gap and is of cardinality 2. Set $m_1=u, m_2=wx, m_3=xy, m_4=yz$. Then $3$ is a gap of $\{2,4\}$, as required. 
\end{example}

A direct corollary of our preceding theorem is we can determine the ``smallest" pivot resolution of $Q/I$.  We use the convention that $\binom{-\infty}{-\infty} = 0$.

\begin{corollary}
    Set $l=\scarfnumber(I)$ and assume that $l\neq \infty$. Then there exist indices $i_{1},\dots, i_l$ such that 
    $\mathcal{T}_{i_{1},\dots, i_l}$ is a pivot resolution. Moreover, for any pivot resolution $\mathcal{F}$, we have
    \[
    \binom{q}{i} - \binom{q-\scarfnumber(I)}{i-\scarfnumber(I)}=\rank (\mathcal{T}_{i_1,\dots, i_l})_i \leq \rank (\mathcal{F})_i=\binom{q}{i}
    \]
    for any index $i$.
\end{corollary}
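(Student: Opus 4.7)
The plan is to split the statement into three claims: existence of a size-$l$ pivot resolution, the explicit rank formula, and the lower bound on an arbitrary pivot resolution's ranks.

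For the existence, I would unpack the definition of $\scarfnumber(I)=l$ to get distinct $\tau,\tau'\in \mathcal{P}([q])$ with $|\tau|=l$ and $m_\tau=m_{\tau'}$. A quick argument shows $|\tau'|\geq l$---else the pair $(\tau',\tau)$ would witness $\scarfnumber(I)<l$---and combined with $\tau\neq \tau'$ this forces $\tau'\setminus \tau$ to be nonempty. Any $h$ in this difference satisfies $m_h\mid m_{\tau'}=m_\tau$, so $h$ is a gap of $\tau$. Writing $\tau=\{i_1<\cdots<i_l\}$ and invoking \cref{thm:pivot-when-resolution} then gives that $\mathcal{T}_{i_1,\dots,i_l}$ is a pivot resolution.

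The rank formula is an immediate combinatorial count from the definition of $\mathcal{T}_{i_1,\dots,i_l}$: the $i$-th free module is indexed by the $i$-subsets of $[q]$ that do not contain $\{i_1,\dots,i_l\}$, of which there are $\binom{q}{i}-\binom{q-l}{i-l}$. For the inequality against an arbitrary pivot resolution $\mathcal{F}$, I would split on cases. If $\mathcal{F}=\mathcal{T}$ the ranks are $\binom{q}{i}$ and the bound is immediate. Otherwise $\mathcal{F}=\mathcal{T}_{j_1,\dots,j_{l'}}$ for some $l'\geq 2$, and the same theorem provides a gap $h$ of $\{j_1,\dots,j_{l'}\}$; this exhibits the two distinct subsets $\{j_1,\dots,j_{l'}\}$ and $\{j_1,\dots,j_{l'},h\}$ with equal lcm, so $l'\geq \scarfnumber(I)=l$. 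Applying the rank formula to $\mathcal{F}$ then reduces the claim to $\binom{q-l'}{i-l'}\leq \binom{q-l}{i-l}$ for $l\leq l'$, which I would obtain by iterating Pascal's identity $\binom{n}{k}=\binom{n-1}{k-1}+\binom{n-1}{k}$; the stated convention $\binom{-\infty}{-\infty}=0$ together with the standard $\binom{n}{k}=0$ for $k<0$ handles the degenerate indices uniformly.

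The main subtlety---not really an obstacle---is the translation, in the minimality step, from the combinatorial gap condition of \cref{thm:pivot-when-resolution} back into the defining property of $\scarfnumber(I)$; once that identification is made, the rest is routine binomial monotonicity and combinatorial bookkeeping.
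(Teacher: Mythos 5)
Your argument is correct and follows essentially the same route as the paper: both proofs rest on the characterization of pivot resolutions via gaps (Theorem~\ref{thm:pivot-when-resolution}), the observation that the defining pair $(\tau,\tau')$ for the Scarf number produces a gap of $\tau$, and the combinatorial count $\binom{q}{i}-\binom{q-l}{i-l}$. Where the paper compresses the minimality comparison into the phrase ``the smallest pivot resolution comes from an index set that has a gap with the smallest cardinality,'' you spell out the two steps it implicitly invokes --- that any pivot resolution $\mathcal{T}_{j_1,\dots,j_{l'}}$ witnesses $l'\geq \scarfnumber(I)$, and that $\binom{q-l'}{i-l'}\leq\binom{q-l}{i-l}$ for $l\leq l'$ --- so your version is slightly more explicit but not genuinely different in method.
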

\begin{proof}
    It is easy to see that any index set that has a gap is non-Scarf, and any minimal non-Scarf index set has a gap. By the definition of $l$, there exist indices $i_{1},\dots, i_l$ such that the set of these indices has a gap. By \cref{thm:pivot-when-resolution}, $\mathcal{T}_{i_{1},\dots, i_l}$ is a pivot resolution. 

    Also by \cref{thm:pivot-when-resolution}, the ``smallest" pivot resolution comes from an index set that has a gap with the smallest cardinality, which, in this case, is $\{i_1,\dots, i_l\}$. Finally, we compute the rank of $(\mathcal{T}_{i_1,\dots, i_l})_i$ for any index $i$. By the construction of this pivot resolution, we have
    \begin{align*}
        \rank (\mathcal{T}_{i_1,\dots, i_l})_i &= \rank (\mathcal{T})_i - |\{ \tau \in \mathcal{P}([q]) \colon \tau \supseteq \{i_1,\dots, i_l\} \text{ and } |\tau|=i \}|\\
        &= \binom{q}{i} - \binom{q-l}{i-l}= \binom{q}{i} - \binom{q-\scarfnumber(I)}{i-\scarfnumber(I)},
    \end{align*}
    as required.
\end{proof}

This pivot resolution gives a new bound on Betti numbers for any monomial ideal. 

\begin{corollary}\label{cor:Betti-minimal-pivot}
    For any $i\in \mathbb{Z}$, we have
    \[
    \beta_i^Q(Q/I) \leq \binom{q}{i} - \binom{q-\scarfnumber(I)}{i-\scarfnumber(I)}.
    \]
    Moreover, the equality occurs for any integer $i$ if and only if $Q/I$ has a minimal pivot resolution.
\end{corollary}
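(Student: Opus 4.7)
The plan is to deduce both parts of the corollary directly from the preceding result, which already exhibits a pivot resolution of rank $\binom{q}{i} - \binom{q-\scarfnumber(I)}{i-\scarfnumber(I)}$ in each homological degree $i$.

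For the inequality, the preceding corollary produces a pivot resolution $\mathcal{F}' = \mathcal{T}_{i_1,\dots,i_l}$ with $l=\scarfnumber(I)$ whose rank in degree $i$ equals exactly $\binom{q}{i} - \binom{q-\scarfnumber(I)}{i-\scarfnumber(I)}$. Since the Betti number $\beta_i^Q(Q/I)$ is bounded above by the rank in degree $i$ of any free resolution of $Q/I$, the claimed inequality follows immediately. (The case $\scarfnumber(I)=\infty$ is the case in which the Taylor resolution itself is minimal, and under the stated convention the right-hand side reduces to $\binom{q}{i}$, recovering the Taylor bound.)

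For the equivalence, I will argue both directions using the same pivot resolution $\mathcal{F}'$. First assume equality holds for every $i$. Then $\mathcal{F}'$ is a free resolution of $Q/I$ whose rank in each homological degree agrees with $\beta_i^Q(Q/I)$; over a standard graded polynomial ring, such a resolution is necessarily minimal, so $Q/I$ admits a minimal pivot resolution. Conversely, suppose $Q/I$ has some minimal pivot resolution $\mathcal{F} = \mathcal{T}_{j_1,\dots,j_{l'}}$. By Theorem~\ref{thm:pivot-when-resolution} the index set $\{j_1,\dots,j_{l'}\}$ has a gap, so it is non-Scarf and hence $l' \geq \scarfnumber(I)$. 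Minimality gives $\beta_i^Q(Q/I) = \binom{q}{i} - \binom{q-l'}{i-l'}$ for all $i$. Combining this with the inequality just established yields
\[
\binom{q}{i} - \binom{q-l'}{i-l'} \leq \binom{q}{i} - \binom{q-\scarfnumber(I)}{i-\scarfnumber(I)}
\]
for every $i$, forcing $l' = \scarfnumber(I)$ and thus equality in the displayed bound.

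There is no real obstacle here beyond bookkeeping: the substantive content, namely the existence of a pivot resolution of the smallest possible rank and the computation of its ranks, has already been carried out. The only care needed is to invoke the standard fact that a free resolution whose ranks equal the Betti numbers is minimal, and to handle the edge case $\scarfnumber(I) = \infty$ via the paper's stated convention.
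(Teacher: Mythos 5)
Your proof is correct and follows the natural route the paper implicitly takes: the inequality comes directly from the preceding corollary's explicit pivot resolution of rank $\binom{q}{i} - \binom{q-\scarfnumber(I)}{i-\scarfnumber(I)}$ together with the structure theorem for graded free resolutions, and the equivalence follows by combining minimality, Theorem~\ref{thm:pivot-when-resolution}, and the rank count. One very small point worth making explicit in the converse direction: you assume the minimal pivot resolution has the form $\mathcal{T}_{j_1,\dots,j_{l'}}$ with $l' \geq 2$, which omits the case that the minimal pivot resolution is the Taylor resolution itself; but in that case the Taylor resolution is minimal, which forces $\scarfnumber(I) = \infty$, and then the displayed bound reads $\beta_i^Q(Q/I) \leq \binom{q}{i}$ with equality, so the claim holds there too.
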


Before moving on, we note that $Q/I$ has a minimal pivot resolution in another special case.

\begin{theorem}\label{thm:q-1-minimal-pivot}
    If  $\scarfnumber(I)\geq q-1$, then $Q/I$ has a minimal pivot resolution. In particular, if $q\leq 3$, then $Q/I$ has a minimal pivot resolution.
\end{theorem}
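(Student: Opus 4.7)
The plan is to split into the two possible cases for $\scarfnumber(I) \geq q - 1$, namely $\scarfnumber(I) = \infty$ and $\scarfnumber(I) = q - 1$ (the text just noted that the Scarf number is either $\infty$ or bounded above by $q - 1$). In the infinite case, the discussion following \cref{def:scarf-number} already tells us that the Taylor resolution is minimal, and the Taylor resolution is itself a pivot complex via $\Omega = \mathcal{P}([q])$ in the definition, so this case is immediate.

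For $\scarfnumber(I) = q - 1$, I would invoke the corollary preceding this theorem to produce indices $i_1 < \cdots < i_{q-1}$ such that $\mathcal{T}_{i_1,\ldots,i_{q-1}}$ is a pivot resolution, and then verify that this specific resolution is minimal. Since the differentials of $\mathcal{T}_{i_1,\ldots,i_{q-1}}$ are restrictions of the Taylor differentials, every nonzero entry has the form $\pm m_\tau/m_{\tau \setminus j}$ for some $\tau$ in the pivot complex and some $j \in \tau$, and such an entry lies in $\mathfrak{m}$ precisely when $m_\tau \neq m_{\tau \setminus j}$.

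To rule out the equality $m_\tau = m_{\tau\setminus j}$, I would observe that any $\tau$ appearing in $\mathcal{T}_{i_1,\ldots,i_{q-1}}$ has $|\tau| \leq q - 1$, hence $|\tau \setminus j| \leq q - 2 < \scarfnumber(I)$. By definition of the Scarf number, $\tau \setminus j$ is therefore a Scarf index set, so $m_{\tau \setminus j} = m_{\tau'}$ forces $\tau' = \tau \setminus j$; applying this with $\tau' = \tau$ contradicts $j \in \tau$, and hence $m_\tau \neq m_{\tau \setminus j}$. This delivers minimality. The ``in particular'' assertion for $q \leq 3$ then reduces to the bound $\scarfnumber(I) \geq 2 \geq q - 1$. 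The one bookkeeping point I expect to watch is that $\tau \setminus j$ actually lies in the pivot complex so the differential entry is defined in the first place, but this is automatic since $|\tau \setminus j| \leq q - 2$ is too small to contain the $(q-1)$-element set $\{i_1,\ldots,i_{q-1}\}$.
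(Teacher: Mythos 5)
Your proposal is correct, and the key step is sound: if $\scarfnumber(I) = q-1$, then every $\tau$ indexing a basis element of $\mathcal{T}_{i_1,\dots,i_{q-1}}$ has $|\tau| \leq q-1$, hence $|\tau\setminus j| \leq q-2 < \scarfnumber(I)$, which forces $\tau \setminus j$ to be a Scarf index set; taking $\tau' = \tau$ in the Scarf property yields $m_\tau \neq m_{\tau\setminus j}$, so every nonzero differential entry $\pm m_\tau/m_{\tau\setminus j}$ is a nonconstant monomial and lies in $\mathfrak{m}$.

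However, this is a genuinely different route from the paper's argument. The paper proves minimality \emph{indirectly}: it computes each Betti number $\beta_i^Q(Q/I)$ outright, sandwiching it between the Taylor bound $\binom{q}{i}$ and the count of Scarf index sets of cardinality $i$ (which agree for $i \leq q-2$), pinning down $\beta_q = 0$ via \cref{cor:Betti-minimal-pivot}, recovering $\beta_{q-1} = q-1$ from the vanishing of the alternating sum, and then invoking the equality criterion of \cref{cor:Betti-minimal-pivot}. Your argument instead verifies the definition of minimality directly on a chosen pivot resolution, using only the Scarf property of small index sets — no appeal to the Scarf-complex lower bound on Betti numbers, no alternating-sum identity. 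The trade-off is that the paper's route gives the Betti numbers explicitly as a byproduct, while yours is shorter and more self-contained and, as you note, already handles the bookkeeping that $\tau\setminus j$ remains in the pivot complex. Both are valid.
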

\begin{proof}
    If $\scarfnumber(I)=\infty$, then the Taylor resolution is minimal, as required. We can now assume that $\scarfnumber(I)=q-1$.
    For any index $i$, we have
    \[
    \binom{q}{i}\geq \beta_i^Q(Q/I)\geq |\{\text{Scarf index sets of cardinality } i \}|,
    \]
    where the first inequality comes from the ranks of the free modules of the Taylor resolution, and the second is by \cite[Theorem 59.2]{Pe11}. Since $\scarfnumber(I)=q-1$, any index set of cardinality at most $q-2$ is Scarf. Thus we have $\beta_i^Q(Q/I) = \binom{q}{i}$  for any $i\leq q-2$ due to the above inequalities. Also since $\scarfnumber(I)=q-1$, we have $\beta_q^Q(Q/I)=0$ by \cref{cor:Betti-minimal-pivot}. Since the alternating sum of total Betti numbers of $Q/I$ equals 0, we easily obtain $\beta_{q-1}^Q(Q/I)=q-1=\binom{q}{q-1}-\binom{q-(q-1)}{(q-1)-(q-1)}$. By the preceding result, $Q/I$ has a minimal pivot resolution.

    When $q\leq 3$, we have $2\leq \scarfnumber(I)\leq q-1\leq 2$. In other words, $\scarfnumber(I)=q-1$, thus the result follows. 
\end{proof}

If $\scarfnumber(I)<q-1$, it is inconclusive whether $Q/I$ has a minimal pivot resolution. We illustrate this in the following discussion. 

\begin{example}\label{ex2}
    Set $Q=\mathbb{Q}[u,w,x,y,z]$, $I_1=(wx, xy, yz, wz)$ and $I_2=(u,wx,xy,yz)$. We recall that $I_2$ is the ideal in \cref{exa:scarf-number}. It is clear that $\mu(I_1)=\mu(I_2)=4$ and following \cref{exa:scarf-number}, one can verify that $\scarfnumber(I_1)=\scarfnumber(I_2)=2$. 
    
    Using Macaulay2 \cite{M2}, one can check the Betti numbers of $Q/I_1$ are $(1,4,4,1)$ and those of $Q/I_2$ are $(1,4,5,2)$. By \cref{cor:Betti-minimal-pivot}, $Q/I_2$ has a minimal pivot resolution, but $Q/I_1$ does not.
\end{example}

\section{Differential Graded Algebra Structure}\label{sec:DG}

We recall the definition of DG algebras. Note that we assume that a DG algebra is graded-commutative and associative.

\begin{definition}
    Let $S$ be a ring, $(F,\partial)$ a complex of $S$-modules where $F_i=0$ for any $i<0$. If we have an element $a\in F_n$ for some $n$, then let $|a|$ denote the homological degree of $a$, i.e., $|a|=n$. Then $(F,\partial)$ is a \emph{DG-algebra} if it is equipped  with a multiplication structure $(\star)$ and a unit element $1\in F_0$ and for any homogeneous $a,b\in F$, we have
    \begin{enumerate}
        \item (unitary) $a\star 1 = 1 \star a =a$.
        \item (graded-commutativity) $a\star b=(-1)^{|a||b|}b\star a$, and if $|a|$ is odd, $a\star a =0$.
        \item (associativity) $(a\star b) \star c=a\star (b\star c)$.
        \item (Leibniz's rule) $\partial(a\star b)=\partial(a)\star b+(-1)^{|a|} a\star \partial(b)$.
    \end{enumerate}
\end{definition}

Taylor resolutions are famous examples of DG-algebras. Recall that we use $\{\epsilon_A \}_{A\in \mathcal{P}([q])}$ to denote the bases for the free modules in a Taylor resolution $\mathcal{T}$. Gemeda \cite{Ge76} showed that Taylor resolutions are DG-algebras with the following multiplication:
\[
    \epsilon_A\star \epsilon_B=\begin{cases}
        0& \text{if }A\cap B\neq \emptyset,\\
        \sign(A,B) \frac{m_A m_B}{m_{A\cup B}} \epsilon_{A\cup B} &  \text{if } A\cap B = \emptyset.
    \end{cases} 
\]
where $A,B\in \mathcal{P}([q])$. We will show that all pivot complexes are also DG-algebras.  

\begin{theorem}\label{thm:pivot-is-DG}
     Any pivot resolution of $Q/I$ over $Q$ is a DG-algebra.
\end{theorem}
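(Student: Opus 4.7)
My plan is to identify the pivot resolution with a quotient $\mathcal{T}/J$ of the Taylor DG-algebra $(\mathcal{T},\star)$ by an explicit DG-ideal $J$; since quotients of DG-algebras by DG-ideals are again DG-algebras, this endows the pivot resolution with the desired structure. After relabelling I may assume $\{i_1,\ldots,i_l\}=[l]$, and by \cref{thm:pivot-when-resolution} I fix a gap $h\in[q]\setminus[l]$ of $[l]$. Set $\mathcal{U}=\{\tau\subseteq[q] : \tau\supseteq [l]\cup\{h\}\}$, and let $J$ be the DG-ideal of $\mathcal{T}$ generated by $\{\epsilon_\tau : \tau\in\mathcal{U}\}$.

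First I would describe $J$ explicitly as a $Q$-submodule. The key observation is that $\mathcal{U}$ is closed under disjoint union, so the Gemeda formula gives $\mathcal{T}\star\mathcal{U}\subseteq Q\text{-span}(\mathcal{U})$; combining this with Leibniz yields $J = Q\text{-span}(\mathcal{U})+Q\text{-span}(\partial \mathcal{U})$. Moreover, for $\tau\in\mathcal{U}$ the set $\tau\setminus h$ still contains $[l]$, so $h$ is a gap of $\tau\setminus h$ and $m_\tau=m_{\tau\setminus h}$. Sorting the summands in $\partial\epsilon_\tau$ by the membership of $c\in\tau$ in $\{h\}$, $[l]$, or $\tau\setminus([l]\cup\{h\})$ gives the structural formula
\[
\partial \epsilon_\tau \;=\; \pm\,\epsilon_{\tau\setminus h}\;+\;\sum_{c\in[l]}\pm\tfrac{m_\tau}{m_{\tau\setminus c}}\epsilon_{\tau\setminus c}\;+\;\sum_{c\in\tau\setminus([l]\cup\{h\})}\pm\tfrac{m_\tau}{m_{\tau\setminus c}}\epsilon_{\tau\setminus c},
\]
where the first sum contributes pivot cells, the last sum contributes cells of $\mathcal{U}$, and the coefficient of $\epsilon_{\tau\setminus h}$ is $\pm 1$.

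Next I would construct the chain-complex isomorphism $\mathcal{T}/J\to\mathcal{T}_{i_1,\ldots,i_l}$ that sends the class of each pivot basis vector to itself; the differentials agree because the Taylor differential of a pivot cell is already supported on pivot cells. Surjectivity reduces to showing every non-pivot cell $\epsilon_\sigma$ (with $\sigma\supseteq[l]$) is equivalent modulo $J$ to a pivot combination: if $h\in\sigma$ then $\sigma\in\mathcal{U}$ and $\epsilon_\sigma\in J$, and if $h\notin\sigma$ then $\sigma\cup h\in\mathcal{U}$ so the relation $\partial\epsilon_{\sigma\cup h}\in J$ solves for $\epsilon_\sigma$ in terms of pivot cells (its coefficient in $\partial\epsilon_{\sigma\cup h}$ being $\pm 1$).

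The main obstacle is injectivity — no nonzero $Q$-linear combination of pivot cells should lie in $J$. To verify this I would write a general element of $J$ as $x=\sum_{\tau\in\mathcal{U}}\bigl(a_\tau\epsilon_\tau+b_\tau\partial\epsilon_\tau\bigr)$ with $a_\tau,b_\tau\in Q$ and read off the coefficient of the basis vector $\epsilon_{\tau\setminus h}$ (which is non-pivot and not in $\mathcal{U}$). The structural formula above shows that among the terms comprising $x$ only $b_\tau\partial\epsilon_\tau$ contributes to this coefficient, which is therefore $\pm b_\tau$. If $x$ is supported on pivot cells then $b_\tau=0$ for all $\tau\in\mathcal{U}$, so $x=\sum_\tau a_\tau\epsilon_\tau\in Q\text{-span}(\mathcal{U})$; since $\mathcal{U}$ and the pivot basis are disjoint, $a_\tau=0$ as well, and $x=0$. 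Once the isomorphism $\mathcal{T}/J\cong\mathcal{T}_{i_1,\ldots,i_l}$ is in place, the DG-algebra structure on $\mathcal{T}/J$ inherited from $(\mathcal{T},\star)$ transports to the pivot resolution, completing the proof. The delicate part is the injectivity step, which is where the specific choice of $\mathcal{U}$ — rather than the larger collection $\{\tau:\tau\supseteq[l]\}$ — is crucial.
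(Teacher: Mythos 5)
Your proof is correct and reaches the same endpoint as the paper's --- the pivot resolution is the quotient of the Taylor DG-algebra $(\mathcal{T},\star)$ by the submodule $\operatorname{span}_Q\{\epsilon_\tau,\partial\epsilon_\tau : \tau\supseteq[l]\cup h\}$ --- but you place the logical burden differently. The paper takes the module-level identification $\mathcal{T}_{1,\dots,l}\cong\mathcal{T}/\mathcal{I}$ as a black box from discrete Morse theory (citing Katth\"an's proof of Theorem~4.1 for this), and the work of the proof is showing that $\mathcal{I}$ is a DG-ideal: the Leibniz rule, closure under $\partial$, and closure under multiplication, the last step being the nontrivial Gemeda-formula verification. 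You instead \emph{define} $J$ to be the DG-ideal generated by $\{\epsilon_\tau : \tau\in\mathcal{U}\}$, so the ``DG-ideal'' property holds by fiat, and your work goes into (a)~computing $J$ explicitly as $Q\text{-span}(\mathcal{U})+Q\text{-span}(\partial\mathcal{U})$ --- which is the same Gemeda/Leibniz computation in a different guise, showing your $J$ equals the paper's $\mathcal{I}$ --- and (b)~constructing the chain isomorphism $\mathcal{T}/J\cong\mathcal{T}_{1,\dots,l}$ by hand, with surjectivity via solving $\partial\epsilon_{\sigma\cup h}\equiv 0$ for $\epsilon_\sigma$ and injectivity via the $\epsilon_{\tau\setminus h}$-coefficient argument. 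Your injectivity step is correct (the only $\tau'\in\mathcal{U}$ with $\tau\setminus h$ as a facet is $\tau'=\tau$, and the coefficient is a unit since $h$ is a gap) and is exactly the content that the discrete Morse citation would otherwise supply. The trade-off: your proof is more self-contained and avoids the Morse-theoretic input beyond what \cref{thm:pivot-when-resolution} already gives, at the cost of redoing the module-isomorphism bookkeeping explicitly; the paper's version is shorter but leans on two external lemmas.
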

\begin{proof}
    By relabelling, we can assume that any pivot complex is of the form $\mathcal{T}_{1,\dots, l}$ for some integer $l$. Assume that $\mathcal{T}_{1,\dots, l}$ is a resolution of $Q/I$, i.e., by the proof of \cref{thm:pivot-when-resolution}, $\mathcal{T}_{1,\dots, l}$ is a Morse resolution induced by the Morse matching
    \[
    A=\{ \tau\cup h \to \tau \setminus h \colon \tau \supseteq [l] \},
    \]
    for some $h>l$. By discrete Morse theory (see, e.g., \cite[Proof of Theorem 4.1]{Kat19}), $\mathcal{T}_{1,\dots, l}$ is exactly $\mathcal{T}/\mathcal{I}$ where \[
    \mathcal{I}\coloneqq \operatorname{span}_Q \{ \epsilon_\tau, \partial\epsilon_\tau \colon \tau \supseteq [l] \cup h  \}.  \]
    By \cite[Lemma 10.36]{BSW}, it suffices to show that $\mathcal{I}$
    is a DG-ideal of $\mathcal{T}$, i.e., we will show three things:
    \begin{enumerate}
        \item $\mathcal{I}$ satisfies the Leibniz rule: This is automatic since it is a subset of $\mathcal{T}$.
        \item $\mathcal{I}$ is closed under taking the differential: Indeed, for any $\tau \supseteq [l] \cup h$, the elements $\partial (\epsilon_\tau)$ and $
        \partial(\partial \epsilon_\tau)=0$ 
        are in $\mathcal{I}$.
        \item $\mathcal{I}$ is an ideal of $\mathcal{T}$: It suffices to show that for any $\tau \supseteq [l] \cup h$ and any $\sigma \subseteq [q]$, the two elements  $\epsilon_\tau \star \epsilon_\sigma$ and $\partial(\epsilon_\tau ) \star  \epsilon_\sigma$ are in $\mathcal{I}$. We will show $\epsilon_\tau \star  \epsilon_\sigma\in \mathcal{I}$ first. Indeed, if $\tau\cap \sigma \neq \emptyset$, then $\epsilon_\tau \star \epsilon_\sigma=0\in \mathcal{I}$. On the other hand, if $\tau\cap \sigma = \emptyset$, then
        \[
        \epsilon_\tau \star \epsilon_\sigma = \sign(\tau,\sigma) \frac{m_\tau m_\sigma}{m_{\tau\cup\sigma}} \epsilon_{\tau\cup\sigma},
        \]
        and since $\tau \cup \sigma \supseteq [l] \cup h$, we have $\epsilon_\tau \star \epsilon_\sigma\in \mathcal{I}$, as required.

        Finally, we show that $\partial(\epsilon_\tau ) \star  \epsilon_\sigma \in \mathcal{I}$. Indeed, by Leibniz rule, we have
        \[
        \partial(\epsilon_\tau ) \star  \epsilon_\sigma = \partial(\epsilon_\tau \star \epsilon_\sigma ) - (-1)^{|\tau|} \epsilon_\tau \star \partial \epsilon_\sigma \in \mathcal{I}
        \]
        since the product of $\epsilon_\tau$ and any other basis element is in $\mathcal{I}$, as required.\qedhere
    \end{enumerate}
\end{proof}

\begin{remark}
    The proof above hinges on the fact that when pivot complexes are resolutions, they are Morse resolutions, and hence in particular, already quotients of the Taylor resolution. As a matter of fact, one can show that pivot complexes are DG-algebras directly by  defining the multiplication $(\star)$ on $\mathcal{T}_{1,\dots, l}$ to be
\[
    \epsilon_A\star \epsilon_B=\begin{cases}
        0& \text{if }A\cap B\neq \emptyset \text{ or } [l+1] \subseteq A\cup B,\\
        \sign(A,B) \frac{m_A m_B}{m_{A\cup B}} \epsilon_{A\cup B} &  \text{if } A\cap B = \emptyset \text{ and } [l] \nsubseteq A\cup B,
        \\
        \sign(A,B)\sum_{i\in [l]} \sign(i,A\cup B\setminus i) \displaystyle\frac{m_A m_B}{m_{A\cup B\cup (l+1) \setminus i}} \epsilon_{A\cup B\cup (l+1) \setminus i} & \text{if } A\cap B = \emptyset,\ [l] \subseteq A\cup B, \text{ and } l+1 \notin A\cup B,
    \end{cases} 
\]
    and verify the necessary conditions. We leave this part to interested readers.
    
    We call these pivot complexes because of this multiplication rule. Basically,  $(l+1)$ appears whenever the multiplication rule for the Taylor resolution no longer makes sense in the pivot resolutions. We call $l+1$ a ``pivot" that we use to modify the formulae and fit the rules.
\end{remark}

\begin{remark}
    We have one immediate application of the fact that pivot resolutions are canonically quotient DG-algebras of the Taylor resolution. Assume $I$ and $J$ are monomial ideals of $Q$ such that $I\subseteq J$. Then we have the following maps of DG-algebras by Lemma \cite[Lemma 4.2]{Iyen97}:
    \[
    \text{Taylor resolution of $Q/I$} \to \text{Taylor resolution of $Q/J$} \to \text{pivot resolution of $Q/J$}.
    \]
    By Theorem \cite[Theorem 1.2]{Iyen97}, we obtain a free resolution of $Q/J$ over $Q/I$ using a pivot resolution of $Q/J$ and the Taylor resolution of $Q/I$, thus obtaining an improvement of \cite[Theorem 4.3]{Iyen97}.
\end{remark}

\section{A system of higher homotopies for pivot resolutions}\label{system}

 Let $Q$ be a polynomial ring over a field and  $\mathfrak{a}$ be an ideal of $Q$ generated by a regular sequence $a_1,\dots, a_r$, and $R=Q/\mathfrak{a}$. Let $I=(m_1,\dots, m_q)$ be a monomial ideal of $Q$, with $l\coloneqq \scarfnumber(I)$. We assume that the Taylor resolution of $Q/I$ is not minimal, i.e., $l<\infty$, and $l+1$ is a gap of $[l]$.  We further assume that $\mathfrak{a}\subseteq I$. Thus we can write $a_i=a_{i1}m_1+\cdots + a_{iq}m_q$ for $i\in [r]$ and $a_{ij}\in Q$ where $j\in [q]$.
  By the Eisenbud-Shamash construction discussed in \Cref{ES}, a pivot resolution can be lifted to a resolution of $R/I$ over $R$ using a system of higher homotopies (\cref{thm:ES}). In this section, our goal is to explicitly describe such a system for pivot resolutions.
 
 Without loss of generality, we assume that $\mathcal{T}_{1,\dots, l}$ is a pivot resolution of $Q/I$.  We give explicit formulae for a system of higher homotopies for $a_1,\dots, a_r$ on $\mathcal{T}_{1,\dots, l}$. 

\begin{theorem}\label{thm:system-pivot}
    For any $s\in [r]$, set
\begin{align*}
        \sigma_{\mathbf{e}_s}(\epsilon_A)=\begin{cases}
            \sum_{j\notin A} \sign(j,A) a_{sj}\frac{m_jm_A}{m_{A\cup j}}  \epsilon_{A\cup j} & \text{if } |A\cap [l]|\neq l-1\\
            \sum_{j\notin A\cup t} \sign(j,A) a_{sj}\frac{m_jm_A}{m_{A\cup j}}  \epsilon_{A\cup j} & \text{if } [l] \setminus A = \{t\} \text{ and } l+1\in A \\
            \sum_{j\notin A\cup t} \sign(j,A) a_{sj}\frac{m_jm_A}{m_{A\cup j}}  \epsilon_{A\cup j}\\ +(-1)^{l+1} \sign(t,A)a_{st} \sum_{i\in [l]} \sign(i,A\cup t\setminus i) \frac{m_tm_A}{m_{A\cup t\cup (l+1)\setminus i}}\epsilon_{A\cup t\cup  (l+1)\setminus i} & \text{if } [l] \setminus A = \{t\} \text{ and } l+1\notin A
        \end{cases}
    \end{align*}
    and
\[
\sigma_{\mathbf{0}}=\delta, \qquad \sigma_{\mathbf{u}}=0 \text{ if } |\mathbf{u}|\geq 2.
\]
where $\delta$ denotes the differential in $\mathcal{T}_{1,\ldots,l}$.    Let $(\sigma)$ denote this system. Then
    $(\sigma)$ is a system of higher homotopies for $a_1,\dots, a_r$ on the pivot resolution $\mathcal{T}_{1,\dots, l}$.
\end{theorem}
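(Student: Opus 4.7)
The plan is to derive the system $\sigma$ by descending Sobieska's system of higher homotopies $\tilde\sigma$ from the Taylor resolution $\mathcal{T}$ through the quotient $\mathcal{T}\to \mathcal{T}/\mathcal{I}\cong \mathcal{T}_{1,\ldots,l}$ established in the proof of \cref{thm:pivot-is-DG}, where $\mathcal{I}=\operatorname{span}_Q\{\epsilon_\tau,\partial\epsilon_\tau:\tau\supseteq[l+1]\}$. Recall from \cite{Sob23} that $\tilde\sigma_{\mathbf{u}}=0$ for $|\mathbf{u}|\geq 2$ together with
\[
\tilde\sigma_{\mathbf{e}_s}(\epsilon_A)=\sum_{j\notin A}\sign(j,A)\,a_{sj}\,\frac{m_j\,m_A}{m_{A\cup j}}\,\epsilon_{A\cup j}
\]
defines a system of higher homotopies on $\mathcal{T}$ for $a_1,\ldots,a_r$.

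First I would verify that $\tilde\sigma_{\mathbf{e}_s}(\mathcal{I})\subseteq\mathcal{I}$ for every $s$. On a generator $\epsilon_\tau$ with $\tau\supseteq[l+1]$, every set $\tau\cup j$ appearing in $\tilde\sigma_{\mathbf{e}_s}(\epsilon_\tau)$ still contains $[l+1]$, so the image lies in $\mathcal{I}$. For the generator $\partial\epsilon_\tau$, the relation $\tilde\sigma_{\mathbf{e}_s}\partial+\partial\tilde\sigma_{\mathbf{e}_s}=a_s\cdot\operatorname{id}$ on $\mathcal{T}$ rewrites $\tilde\sigma_{\mathbf{e}_s}(\partial\epsilon_\tau)=a_s\epsilon_\tau-\partial\tilde\sigma_{\mathbf{e}_s}(\epsilon_\tau)$, and both terms lie in $\mathcal{I}$. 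Hence $\tilde\sigma$ descends to a well-defined collection $\bar\sigma$ on $\mathcal{T}/\mathcal{I}\cong \mathcal{T}_{1,\dots,l}$; all three axioms of a system of higher homotopies transfer to $\bar\sigma$ since they are linear identities preserved under taking quotients.

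The main step is to identify $\bar\sigma$ with the stated formula by case analysis on a basis element $\epsilon_A$ with $A\not\supseteq[l]$. When $|A\cap[l]|\leq l-2$, every $j\notin A$ yields $A\cup j\not\supseteq[l]$, so Sobieska's sum already lies in the pivot basis and no modification is needed, giving the first case. When $[l]\setminus A=\{t\}$ and $l+1\in A$, the single term with $j=t$ satisfies $A\cup t\supseteq[l+1]$, so it vanishes in $\mathcal{T}/\mathcal{I}$, producing the second case. The substantive third case, with $[l]\setminus A=\{t\}$ and $l+1\notin A$, requires rewriting the $j=t$ contribution $\sign(t,A)\,a_{st}\,\frac{m_t\,m_A}{m_{A\cup t}}\,\epsilon_{A\cup t}$: here $\epsilon_{A\cup t}$ is not in the pivot basis (since $A\cup t\supseteq[l]$) but is not itself a generator of $\mathcal{I}$ (since $l+1\notin A\cup t$). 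I would use the relation $\partial\epsilon_{A\cup t\cup(l+1)}\in\mathcal{I}$: its $j'=l+1$ term is $\sign(l+1,A\cup t)\,\epsilon_{A\cup t}$ (using that $m_{l+1}$ divides $m_{[l]}$, hence $m_{A\cup t\cup(l+1)}=m_{A\cup t}$), while its $j'\in A\cup t\setminus[l]$ terms remain inside $\mathcal{I}$; solving for $\epsilon_{A\cup t}$ modulo $\mathcal{I}$ expresses it as a signed sum over the pivot-basis elements $\epsilon_{A\cup t\cup(l+1)\setminus i}$ with $i\in[l]$.

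The main obstacle is the sign bookkeeping in this third case. Two identities must be verified: first, $\sign(l+1,A\cup t)=(-1)^l$, since $[l]\subseteq A\cup t$ furnishes exactly the $l$ elements of $A\cup t$ smaller than $l+1$; and second, $\sign(i,A\cup t\cup(l+1)\setminus i)=\sign(i,A\cup t\setminus i)$ for each $i\in[l]$, since $l+1>i$ contributes nothing to the inversion count below $i$. Combining these, inverting the $(-1)^l$ produces the $(-1)^{l+1}$ prefactor, and the inner sum collapses to $\sum_{i\in[l]}\sign(i,A\cup t\setminus i)(\cdots)\epsilon_{A\cup t\cup(l+1)\setminus i}$, matching the third case of the stated formula exactly and completing the identification of $\bar\sigma$ with $\sigma$.
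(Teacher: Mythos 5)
Your proposal is correct, and it takes a genuinely different and substantially shorter route than the paper's. The paper proves the theorem by verifying the three higher-homotopy axioms directly, broken into Lemmas 5.2 through 5.7 plus Propositions 5.8 and 5.9, each of which is a sign-chasing computation (several of them deferred to the Appendix). You instead observe that the pivot resolution is the quotient $\mathcal{T}/\mathcal{I}$ already established in the proof of \Cref{thm:pivot-is-DG}, that Sobieska's higher homotopies $\tilde\sigma$ on $\mathcal{T}$ preserve $\mathcal{I}$ (which is a two-line check: $\tilde\sigma_{\mathbf{e}_s}(\epsilon_\tau)$ is supported on $\sigma\supseteq[l+1]$ for $\tau\supseteq[l+1]$, and then the homotopy relation handles $\partial\epsilon_\tau$), and that the axioms are algebraic identities between $\mathcal{I}$-preserving endomorphisms and therefore descend for free. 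This eliminates essentially all of the verification computations. The only remaining work is identifying the descended map with the stated formula, and your case analysis correctly shows that the first two cases are immediate while the third case follows from reducing the lone problematic term $\epsilon_{A\cup t}$ modulo $\mathcal{I}$ via the relation $\partial\epsilon_{A\cup t\cup(l+1)}\in\mathcal{I}$, using $m_{A\cup t\cup(l+1)}=m_{A\cup t}$ (because $l+1$ is a gap of $[l]$), $\sign(l+1,A\cup t)=(-1)^l$, and $\sign(i,A\cup t\cup(l+1)\setminus i)=\sign(i,A\cup t\setminus i)$ for $i\in[l]$; all three sign identities are exactly right. Beyond being shorter, your approach also explains \emph{why} the formula has the shape it does — it is literally the image of Sobieska's formula in the quotient, with the $(-1)^{l+1}$-sum appearing as the class of $\epsilon_{A\cup t}$ modulo $\mathcal{I}$ — whereas in the paper the formula appears unmotivated until one grinds through the lemmas. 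The one thing worth stating a touch more explicitly is that the bijection between the classes $[\epsilon_\tau]$ with $\tau\not\supseteq[l]$ and the pivot basis respects differentials because $\partial\epsilon_\tau$ is already supported on such $\tau$ when $\tau\not\supseteq[l]$; this is implicit in your appeal to the isomorphism from \Cref{thm:pivot-is-DG} but is the load-bearing fact that makes $\bar\sigma_{\mathbf{0}}=\delta$ come out correctly.
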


Notice that for Taylor resolutions, the first case of this formula matches with \cite[Definition 3.1]{Sob23}. Thus as long as we only use the first case, many of our results follow from \cite{Sob23}. We break the proof of \Cref{thm:system-pivot} into several lemmas. We start by recalling some needed identities regarding $\sign$.

\begin{lemma} \label{lemma2.1} The following identities hold. 
    \begin{enumerate}
        \item For any $A\subseteq [q]$ and $i\notin A$, we have
        \[
        \sign(i,A)=(-1)^{|\{ j\in [q]\colon j<i \}\cap A|}.
        \]
        \item For any $B,C\subseteq [q]$, we have
        \[
        \sign(B,C)=(-1)^{|B||C|}\sign(C,B).
        \] \label{lemma2.1item1}
        \item For any $A,B,C\subseteq [q]$, we have
        \[
        \sign(A,B)\sign(A\cup B,C)=\sign(B,C) \sign(A,B\cup C).
        \]
        \item For any $A\subseteq [q]$ and any $i,j\in C$ where $i\neq j$, we have
        \begin{align*}
            \sign(i,A\setminus i)\sign(j,A\setminus i\setminus j)+\sign(j,A\setminus j)\sign(i,A\setminus i\setminus j) &= 0,\\
            \sign(i,A\setminus i\setminus j)\sign(j,A\setminus i\setminus j)+\sign(i,A\setminus i)\sign(j,A\setminus j) &= 0
        \end{align*}
    \end{enumerate}
\end{lemma}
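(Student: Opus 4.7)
The plan is that part (1) is essentially the definition of $\sign$ unfolded, and parts (2)--(4) will all follow from (1) combined with elementary bookkeeping on permutations.

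For part (1), I would unfold the definition: $p(i,A)$ counts the adjacent transpositions needed to sort the sequence $(i, a_1, \ldots, a_k)$, where $A = \{a_1 < \cdots < a_k\}$, into increasing order. Since $A$ is already sorted, this is exactly the number of $a_s$ with $a_s < i$, i.e.\ $|\{j \in [q] : j < i\} \cap A|$, giving the claimed formula.

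For parts (2) and (3), I would interpret $\sign$ as the sign of a sorting permutation. In (2), assuming $B\cap C=\emptyset$ (otherwise both sides are zero), the permutations sorting the concatenations $(B,C)$ and $(C,B)$ differ exactly by the block swap exchanging the two blocks; that block swap is a product of $|B|\cdot |C|$ adjacent transpositions, which accounts for the factor $(-1)^{|B||C|}$. In (3), assuming pairwise disjointness (otherwise both sides are zero by inspection), both products express the sign of the permutation that sorts the concatenation $(A,B,C)$ into increasing order, merely factored in two different stages: the left-hand side merges $A,B$ first and then incorporates $C$, while the right-hand side merges $B,C$ first and then incorporates $A$. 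Since the overall sign does not depend on the factorization, the equality is immediate.

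For part (4), I would apply (1) to each factor. Assume without loss of generality that $i<j$ (both identities are symmetric in $i,j$), and set $\alpha = |\{a \in A : a < i\}|$ and $\beta = |\{a \in A : a < j\}|$. A direct count using (1) yields $\sign(i,A\setminus i) = \sign(i, A\setminus i\setminus j) = (-1)^\alpha$, $\sign(j, A\setminus j) = (-1)^\beta$, and $\sign(j, A\setminus i\setminus j) = (-1)^{\beta-1}$. The off-by-one in the last expression is the crucial point: deleting $i$ from $A$ drops by one the count of elements below $j$, because $i<j$. Substituting into either identity in (4) produces a sum of two terms both of total parity $\alpha+\beta$ but differing by a single sign, so each sum vanishes.

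None of these steps presents a genuine obstacle; the only subtle point is keeping track of the off-by-one in part (4) that depends on whether the removed element lies below the ``pivot'' index. These four sign identities are the combinatorial shadow of the anti-commutation relations that will be needed for the Leibniz-type computations in the proof of Theorem~\ref{thm:system-pivot}, so packaging them together here streamlines the remainder of the section.
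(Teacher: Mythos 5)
Your proposal is correct and follows essentially the same approach as the paper's proof: part (1) by unfolding the definition, part (2) by counting the $|B||C|$ adjacent transpositions in the block swap, part (3) by observing both products factor the same sorting permutation of $(A,B,C)$ in two different ways, and part (4) by applying part (1) with the WLOG assumption $i<j$ to track the single-parity discrepancy. Your explicit bookkeeping in part (4) via $\alpha$ and $\beta$ is a minor presentational variant of the paper's sign relations $\sign(i,A\setminus i\setminus j)=\sign(i,A\setminus i)$ and $\sign(j,A\setminus i\setminus j)=-\sign(j,A\setminus j)$, but the underlying argument is identical.
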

\begin{proof}
    \begin{enumerate}
        \item This follows from the definition: The amount of permutations needed to turn $i,A$ into an increasing sequence is exactly the number of indices in $A$ that is less that $i$.
        \item Set
    \begin{align*}
        B=\{b_1,\dots, b_r\},\quad 
        C=\{c_1,\dots, c_s\},    \end{align*}
    where $b_1<\cdots <b_r$ and $c_1<\cdots < c_s$ for some integer $s,t$. If $B\cap C\neq \emptyset$, then both $\sign(B,C)$ and $\sign(C,B)$ equal $0$, and thus the result follows. Now we can assume that $B\cap C =\emptyset$. To prove the statement, it suffices to show that it takes $|B||C|$ transpositions needed to turn the sequence
    \[
    b_1,\dots, b_r, c_1,\dots, c_s
    \]
    into 
    \[
    c_1,\dots, c_s, b_1,\dots, b_r.
    \]
    Indeed, to obtain this feat, we first move $b_r$ to the end of the sequence, which takes $|C|$ transpositions. Next, we move $b_{r-1}$ to right before $b_r$, which takes another $|C|$ transpositions. Repeating this argument, the number of transpositions needed is exactly $\sum_{i=1}^r |C|=r|C|=|B||C|$, as required.
        \item Both sides are equal to $(-1)^r$ where $r$ is the amount of transpositions needed to turn the sequence $A,B,C$ into an increasing sequence. Indeed, the left-hand side turns $A,B$ into an increasing sequence first, and then $A\cup B, C$, while the right-hand side turns $B,C$ first, and $A,B\cup C$ later.
        \item  Without loss of generality, we assume that $i<j$. By part (1), we have $\sign(i,A\setminus i\setminus j) = \sign(i,A\setminus i)$ and $\sign(j,A\setminus i\setminus j) = -\sign(j,A\setminus j)$. Thus
        \begin{align*}
            &\ \ \ \ \sign(i,A\setminus i) \sign(j,A\setminus i \setminus j)+ \sign(j,A\setminus j) \sign(i,A\setminus i \setminus j) \\
            &= -\sign(i,A\setminus i)\sign(j,A\setminus j) + \sign(j,A\setminus j)\sign(i,A\setminus i) = 0
        \end{align*}
        and 
        \begin{align*}
            &\ \ \ \ \sign(i,A\setminus i\setminus j) \sign(j,A\setminus i \setminus j)+ \sign(i,A\setminus i) \sign(j,A\setminus j) \\
            &= -\sign(i,A\setminus i)\sign(j,A\setminus j) + \sign(i,A\setminus i)\sign(j,A\setminus j) = 0,
        \end{align*}
        as required. \qedhere
    \end{enumerate}
\end{proof}

We now start checking the conditions needed for $(\sigma)$ to be a system of higher homotopies of $a_1,\dots, a_r$ on $\mathcal{T}_{1,\dots, l}$.

\begin{lemma}\label{lem:Ahasl+1del}
    For any $1\leq s\leq r$ and any $A \nsupseteq [l]$ such that $[l]\setminus A=\{t\}$ and $l+1\in A$, we have 
    \[(\partial\sigma_{\mathbf{e}_s}+\sigma_{\mathbf{e}_s}\partial)(\epsilon_A)=a_s \epsilon_A.
    \]
\end{lemma}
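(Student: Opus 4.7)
The plan is to prove the identity by direct computation: expand both $\partial \sigma_{\mathbf{e}_s}(\epsilon_A)$ and $\sigma_{\mathbf{e}_s}\partial(\epsilon_A)$ using the explicit three-case formula for $\sigma_{\mathbf{e}_s}$, collect the coefficient of each basis element $\epsilon_B$ of $\mathcal{T}_{1,\dots,l}$, and verify that the coefficient of $\epsilon_A$ equals $a_s=\sum_{j=1}^q a_{sj}m_j$ while every other coefficient vanishes. The hypotheses place $\epsilon_A$ in case~2 of $\sigma_{\mathbf{e}_s}$, so $\sigma_{\mathbf{e}_s}(\epsilon_A)=\sum_{j\notin A\cup t}\sign(j,A)a_{sj}\frac{m_jm_A}{m_{A\cup j}}\epsilon_{A\cup j}$, and each $A\cup j$ here still satisfies $[l]\setminus(A\cup j)=\{t\}$ with $l+1\in A\cup j$, so remains a basis element. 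On the other side, for each $i\in A$, the case of $\sigma_{\mathbf{e}_s}$ that applies to $\epsilon_{A\setminus i}$ depends on $i$: case~1 when $i\in [l]\setminus\{t\}$, case~3 when $i=l+1$ (the only instance where the pivot correction term appears), and case~2 when $i\in A\setminus [l+1]$.

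For the coefficient of $\epsilon_A$, the $i=j$ summands in $\partial \sigma_{\mathbf{e}_s}(\epsilon_A)$ yield $\sum_{j\notin A\cup t}a_{sj}m_j$, while the $j=i$ summands in the main part of $\sigma_{\mathbf{e}_s}\partial(\epsilon_A)$ yield $\sum_{i\in A}a_{si}m_i$. The pivot sum of case~3 applied to $\epsilon_{A\setminus(l+1)}$ contributes one extra $\epsilon_A$ term at its internal index $i'=t$; using $|A\cap [l]|=l-1$ one has $\sign(l+1,A\setminus(l+1))=(-1)^{l-1}$, which combined with the prefactor $(-1)^{l+1}$ leaves exactly $a_{st}m_t$. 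Summing these three contributions gives $\sum_{j=1}^q a_{sj}m_j=a_s$, as desired.

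Verifying that every other basis element has zero coefficient splits into two families. First, for $j\notin A\cup t$ and $i\in A\setminus j$, the element $\epsilon_{A\cup j\setminus i}$ appears in $\partial\sigma_{\mathbf{e}_s}(\epsilon_A)$ (add $j$ via $\sigma_{\mathbf{e}_s}$, then $\partial$ removes $i$) and in $\sigma_{\mathbf{e}_s}\partial(\epsilon_A)$ through the case (1, 2, or 3-main) applicable to $\epsilon_{A\setminus i}$; its two coefficients cancel by the first sign identity in \cref{lemma2.1}(4). Second, for $i'\in [l]\setminus\{t\}$, the element $\epsilon_{A\cup t\setminus i'}$ arises in $\sigma_{\mathbf{e}_s}\partial(\epsilon_A)$ from two sources: the $j=t$ summand of case~1 applied to $\epsilon_{A\setminus i'}$, and the internal index $i'$ of the pivot sum in case~3 applied to $\epsilon_{A\setminus(l+1)}$; their sum vanishes after collapsing $(-1)^{l+1}\sign(l+1,A\setminus(l+1))=1$ and a further application of \cref{lemma2.1}(4). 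One also notes that $\epsilon_{A\cup t\setminus (l+1)}$ would require $j=t$ in $\partial\sigma_{\mathbf{e}_s}(\epsilon_A)$, which case~2 for $\epsilon_A$ excludes, and $A\cup t\setminus(l+1)\supseteq [l]$ is not critical anyway, so no spurious term survives.

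The main obstacle is purely bookkeeping in the second family: carefully matching the signs produced by the pivot correction of case~3 (with prefactor $(-1)^{l+1}$ together with the external $\sign(t,A\setminus(l+1))$ and the internal $\sign(i',A\setminus(l+1)\cup t\setminus i')$) against those produced by the $j=t$ branch of case~1 (namely $\sign(i',A\setminus i')\sign(t,A\setminus i')$), then reducing the difference to Lemma~\ref{lemma2.1}(4) via the substitution $A'=A\cup t\setminus(l+1)$. No new algebraic identities beyond those in \cref{lemma2.1} are needed.
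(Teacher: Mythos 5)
Your proposal is correct and follows essentially the same line of argument as the paper: expand $\partial\sigma_{\mathbf{e}_s}(\epsilon_A)$ using case~2 of the homotopy formula, expand $\sigma_{\mathbf{e}_s}\partial(\epsilon_A)$ by splitting the index $i \in A$ into $i\in A\cap[l]$ (case~1), $i=l+1$ (case~3, where the pivot correction appears), and $i\in A\setminus[l+1]$ (case~2), and then match coefficients basis element by basis element. The coefficient of $\epsilon_A$ is assembled exactly as in the paper (the pivot term at internal index $i'=t$ supplying the missing $a_{st}m_t$), and the vanishing of all other coefficients is reduced, as in the paper, to the sign identities in Lemma~\ref{lemma2.1}(4).
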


\begin{proof}
    We have
    \begin{align*}
        &\ \ \ \ \ (\partial\sigma_{\mathbf{e}_s}+\sigma_{\mathbf{e}_s}\partial)(\epsilon_A)\\
        &= \partial\big(\sum_{j\notin A\cup t} \sign(j,A) a_{sj}\frac{m_jm_A}{m_{A\cup j}}  \epsilon_{A\cup j}\big) + \sigma_{\mathbf{e}_s} \big( \sum_{i\in A} \sign(i,A\setminus i) 
        \frac{m_A}{m_{A\setminus i}} \epsilon_{A\setminus i} \big)\\
        &=\!\begin{multlined}[t][.5\displaywidth] 
        \partial\big(\sum_{j\notin A\cup t} \sign(j,A) a_{sj}\frac{m_jm_A}{m_{A\cup j}}  \epsilon_{A\cup j}\big) + \sigma_{\mathbf{e}_s} \big( \sum_{i\in A\cap [l]} \sign(i,A\setminus i)\frac{m_A}{m_{A\setminus  i}} \epsilon_{A\setminus i} \big)\\
        + \sigma_{\mathbf{e}_s} \big( (-1)^{l+1} \frac{m_A}{m_{A\setminus (l+1)}} \epsilon_{A\setminus (l+1)} \big) + \sigma_{\mathbf{e}_s} \big( \sum_{i\in A\setminus [l+1]} \sign(i,A\setminus i) \frac{m_A}{m_{A\setminus i}}\epsilon_{A\setminus i} \big)
        \end{multlined}\\
        &=\!\begin{multlined}[t][.5\displaywidth]
			   \sum_{j\notin A\cup t} \sign(j,A) a_{sj}  \sum_{i\in A\cup j} \sign(i,A\cup j\setminus i) \frac{m_jm_A}{m_{A\cup j\setminus i}}  \epsilon_{A\cup j\setminus i} +  \sum_{i\in A\cap [l]} \sign(i,A\setminus i) \sum_{j\notin A\setminus i} \sign(j,A\setminus i) a_{sj} \frac{m_jm_A}{m_{A\cup j\setminus i}}  \epsilon_{A\cup j\setminus i} \\
            + (-1)^{l+1}\big( \sum_{j\notin A\cup t\setminus (l+1)} \sign(j,A\setminus (l+1)) a_{sj}\frac{m_jm_A}{m_{A\cup j \setminus (l+1)}}   \epsilon_{A\cup j\setminus (l+1)} \\+(-1)^{l+1} \sign(t,A\setminus (l+1))a_{st} \sum_{i\in [l]} \sign(i,A\cup t\setminus (l+1)\setminus i) \frac{m_t m_A}{m_{A\cup t \setminus i}}  \epsilon_{A\cup t\setminus i} \big) \\
            +  \sum_{i\in A\setminus [l+1]} \sign(i,A\setminus i) \sum_{j\notin A\cup t\setminus i} \sign(j,A\setminus i) a_{sj}\frac{m_j m_A}{m_{A\cup j \setminus i}}  \epsilon_{A\cup j\setminus i}      \end{multlined}.
    \end{align*}
    We will simplify the coefficient for each basis element in the above sum.
     We start by looking at the coefficient for $\epsilon_A$:
        \begin{align*}
            &\!\begin{multlined}[t][.5\displaywidth]
			   \sum_{j\notin A\cup t} \sign(j,A)a_{sj}m_j \sign(j,A) + \sum_{i\in A\cap [l]} \sign(i,A\setminus i) \sign(i,A\setminus i) a_{si}m_i \\
                + (-1)^{l+1}\big( \sign(l+1,A\setminus(l+1))a_{s,l+1}m_{l+1}
                + (-1)^{l+1} \sign(t,A\setminus (l+1)) a_{st}m_t \sign(t,A\setminus (l+1)) \big) \\
                + \sum_{i\in A\setminus [l+1]} \sign(i,A\setminus i) \sign(i,A\setminus i) a_{si}m_i
            \end{multlined}\\
            &=\sum_{j\notin A\cup t} a_{sj}m_j  + \sum_{i\in A\cap [l]} a_{si}m_i + (-1)^{l+1}\big( (-1)^{l-1} a_{s,l+1}m_{l+1} 
                + (-1)^{l+1} a_{st}m_t \big) + \sum_{i\in A\setminus [l+1]}  a_{si}m_i,            
        \end{align*}
        where in the first equality we used \cref{lemma2.1} (1). We now rearrange the coefficients:
        \begin{align*}
            &\ \ \ \ \sum_{j\notin A\cup t} a_{sj}m_j  + \sum_{i\in A\cap [l]} a_{si}m_i +  a_{s,l+1}m_{l+1} 
                + a_{st}m_t + \sum_{i\in A\setminus [l+1]}  a_{si}m_i\\
            &=\big( \sum_{i\in A\cap [l]} a_{si}m_i +   a_{s,l+1}m_{l+1} 
                + a_{st}m_t \big) +\big( \sum_{j\notin A\cup t} a_{sj}m_j  +\sum_{i\in A\setminus [l+1]}  a_{si}m_i\big)\\
            &=\big( \sum_{i\in [l],i\neq t} a_{si}m_i +   a_{s,l+1}m_{l+1} 
                + a_{st}m_t \big) +\big( \sum_{j\notin A, j>l+1} a_{sj}m_j  +\sum_{i\in A, i>l+1}  a_{si}m_i\big)\\
            &=\big( \sum_{i\in [l+1]} a_{si}m_i \big) +\big( \sum_{j>l+1}a_{sj}m_j  \big)\\
            &=\sum_{i\in [q]} a_{si}m_i \\
            &= a_s.            
        \end{align*}
         The coefficient of $\epsilon_{A\cup t\setminus i}$ where $i\in [l]\setminus t$ is:
        \begin{align*}
            & \sign(i,A\setminus i) \sign(t,A\setminus i) a_{st}\frac{m_t m_A}{m_{A\cup t\setminus i}}  + (-1)^{l+1}\big( (-1)^{l+1} \sign(t,A\setminus (l+1)) a_{st}\frac{m_t m_A}{m_{A\cup t\setminus i}} \sign(i,A\cup t\setminus(l+1)\setminus i) \big)\\
            &=\sign(i,A\setminus i) \sign(t,A\setminus i) a_{st}\frac{m_t m_A}{m_{A\cup t\setminus i}} +   \sign(t,A)  \sign(i,A\cup t\setminus i) a_{st}\frac{m_t m_A}{m_{A\cup t\setminus i}}\\
            &=0,
        \end{align*}where the last equality is from the second equality of \Cref{lemma2.1}(4).
         The coefficient of $\epsilon_{A\cup j\setminus (l+1)}$ where $j\notin A \cup t \cup (l+1)$ is:
        \begin{align*}
            &\ \ \ \  \sign(j,A)a_{sj}\frac{m_j m_A}{m_{A\cup j\setminus (l+1)}}\sign(l+1,A\cup j\setminus(l+1))+(-1)^{l+1}(\sign(j,A\setminus(l+1))a_{sj}\frac{m_j m_A}{m_{A\cup j\setminus (l+1)}})
             \\
            &= \sign(j,A)a_{sj}\frac{m_j m_A}{m_{A\cup j\setminus (l+1)}}(-1)^{l+1}+(-1)^{l+1}((-1)\sign(j,A)a_{sj}\frac{m_j m_A}{m_{A\cup j\setminus (l+1)}})\\
            &=(-1)^{l+1} \sign(j,A)a_{sj}\frac{m_j m_A}{m_{A\cup j\setminus (l+1)}}+(-1)^{l+2}\sign(j,A)a_{sj}\frac{m_j m_A}{m_{A\cup j\setminus (l+1)}}\\
            &=0,
        \end{align*}
        where in the first equality we used \cref{lemma2.1} (1).    Similarly, the coefficient of $\epsilon_{A\cup j\setminus i}$ where $j\notin A \cup t$ and $i\in A\setminus [l+1]$ is
        \begin{align*}
            &\sign(j,A) a_{sj} \frac{m_j m_A}{m_{A\cup j\setminus i}} \sign(i,A\cup j\setminus i) + \sign(i,A\setminus i) \sign(j,A\setminus i) a_{sj} \frac{m_j m_A}{m_{A\cup j\setminus i}}\\
            &=(-1)^{|A|}\sign(A,j) a_{sj}  \frac{m_j m_A}{m_{A\cup j\setminus i}} \sign(i,A\cup j\setminus i) + (-1)^{|A|-1}\sign(A,j)\sign(i,A\cup j\setminus i) a_{sj} \frac{m_j m_A}{m_{A\cup j\setminus i}}\\
            &=0,
        \end{align*} where the first equality follows from \Cref{lemma2.1} (2) and (4).
    Combining the computations above, we obtain
    \[
    (\partial\sigma_{\mathbf{e}_s}+\sigma_{\mathbf{e}_s}\partial)(\epsilon_A) = a_s \epsilon_A + 0 = a_s\epsilon_A,
    \]
    as required.
\end{proof}

Following similar computations, we have the next result whose proof we provide in the appendix (see \Cref{app}).

\begin{lemma}\label{lem:Adoesnothavel+1del}
    For any $1\leq s\leq r$ and any $A \nsupseteq [l]$ such that $[l]\setminus A=\{t\}$ and $l+1\not \in A$, we have 
    \[(\partial\sigma_{\mathbf{e}_s}+\sigma_{\mathbf{e}_s}\partial)(\epsilon_A)=a_s \epsilon_A.
    \]
\end{lemma}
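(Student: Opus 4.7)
The plan is to follow the same strategy as the proof of \cref{lem:Ahasl+1del}, namely expanding $\partial\sigma_{\mathbf{e}_s}(\epsilon_A)+\sigma_{\mathbf{e}_s}\partial(\epsilon_A)$, grouping the resulting terms by target basis vector, and checking that each coefficient vanishes except the one of $\epsilon_A$, which must equal $a_s$. The key difference from the previous lemma is that, since $l+1\not\in A$ now, $\sigma_{\mathbf{e}_s}(\epsilon_A)$ is computed using the \emph{third} (pivot-containing) case of the formula. I would therefore begin by writing $\sigma_{\mathbf{e}_s}(\epsilon_A)$ as the sum of a \emph{standard part} indexed by $j\notin A\cup t$, contributing terms proportional to $\epsilon_{A\cup j}$, and a \emph{pivot part} indexed by $i\in[l]$, contributing terms proportional to $\epsilon_{A\cup t\cup(l+1)\setminus i}$.

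Next, I would expand $\partial\epsilon_A=\sum_{i\in A}\sign(i,A\setminus i)\frac{m_A}{m_{A\setminus i}}\epsilon_{A\setminus i}$ and observe that its summands split into two classes: the terms with $i\in A\cap[l]$ activate the first case of $\sigma_{\mathbf{e}_s}$ (since $|(A\setminus i)\cap[l]|=l-2$), whereas the terms with $i\in A\setminus[l+1]$ again activate the third case (since $[l]\setminus(A\setminus i)=\{t\}$ and $l+1\notin A\setminus i$), producing further pivot contributions. After applying $\partial$ to $\sigma_{\mathbf{e}_s}(\epsilon_A)$ and $\sigma_{\mathbf{e}_s}$ to each summand of $\partial\epsilon_A$, I would sort the resulting terms into four classes according to target: $\epsilon_A$ itself; $\epsilon_{A\cup j\setminus i}$ for $j\notin A\cup t$ and $i\in A$; the single pivot targets $\epsilon_{A\cup t\cup(l+1)\setminus i}$ for $i\in[l]\setminus t$; and the \emph{double-pivot} targets $\epsilon_{A\cup t\cup(l+1)\setminus i\setminus k}$, which arise both from $\partial$ of the pivot part of $\sigma_{\mathbf{e}_s}(\epsilon_A)$ and from the pivot part of $\sigma_{\mathbf{e}_s}(\epsilon_{A\setminus k})$ whenever $k\in A\setminus[l+1]$.

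For the coefficient of $\epsilon_A$, the contributions from the standard part of $\sigma_{\mathbf{e}_s}(\epsilon_A)$ paired with the summands of $\partial\epsilon_A$ collapse (via $\sign(i,A\setminus i)^2=1$) to $\sum_{j\in[q]\setminus\{t,l+1\}}a_{sj}m_j$; the two missing generator summands $a_{st}m_t$ and $a_{s,l+1}m_{l+1}$ should emerge from $\partial$ applied to the pivot part of $\sigma_{\mathbf{e}_s}(\epsilon_A)$, namely from the $i=t$ term further contracted by removing $l+1$, with the $(-1)^{l+1}$ prefactor producing the correct sign; summing gives $\sum_{j\in[q]}a_{sj}m_j=a_s$. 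The cancellations of the $\epsilon_{A\cup j\setminus i}$ terms proceed identically to those of \cite[Theorem 3.3]{Sob23} using \cref{lemma2.1}(2) and (4), while the single-pivot and double-pivot cancellations use \cref{lemma2.1}(1), (3), and (4). The main obstacle will be the bookkeeping for the double-pivot targets, since each such basis vector receives contributions from two distinct computations and the resulting signs must be matched by a careful application of the identities in \cref{lemma2.1}; tracking the $(-1)^{l+1}$ prefactor consistently through these cancellations is where most of the delicacy of the calculation lies.
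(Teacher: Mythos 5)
Your overall strategy mirrors the paper's: expand $(\partial\sigma_{\mathbf{e}_s}+\sigma_{\mathbf{e}_s}\partial)(\epsilon_A)$, split $\sigma_{\mathbf{e}_s}(\epsilon_A)$ into its standard and pivot parts using the third case of the formula, split $\sigma_{\mathbf{e}_s}\partial(\epsilon_A)$ according to whether $i\in A\cap[l]$ (first case fires) or $i\in A\setminus[l+1]$ (third case fires), then group the resulting terms by target and verify cancellation except at $\epsilon_A$. This is exactly how the appendix proceeds.

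However, the sketched bookkeeping already contains two slips, and you flagged the bookkeeping as the delicate part. First, in the $\epsilon_A$-coefficient: you claim the standard contributions collapse to $\sum_{j\in[q]\setminus\{t,l+1\}}a_{sj}m_j$ and that both $a_{st}m_t$ \emph{and} $a_{s,l+1}m_{l+1}$ must then emerge from $\partial$ applied to the pivot part. This is wrong. Since $l+1\notin A$ in this lemma, $l+1\notin A\cup t$, so the index $j=l+1$ is already in the standard sum $\sum_{j\notin A\cup t}$; together with $\sum_{i\in A}a_{si}m_i$ this gives $\sum_{j\in[q]\setminus\{t\}}a_{sj}m_j$. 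Only $a_{st}m_t$ is missing, and indeed only $a_{st}m_t$ can possibly come from the pivot part of $\sigma_{\mathbf{e}_s}(\epsilon_A)$, whose coefficient is literally $a_{st}m_t$ (the coefficient $a_{s,l+1}$ never appears there). Second, the ``single-pivot targets $\epsilon_{A\cup t\cup(l+1)\setminus i}$ for $i\in[l]\setminus t$'' cannot be final targets: $\partial\sigma_{\mathbf{e}_s}+\sigma_{\mathbf{e}_s}\partial$ preserves homological degree, but $|A\cup t\cup(l+1)\setminus i|=|A|+1$. The correct taxonomy after the double-pivot class $\epsilon_{A\cup t\cup(l+1)\setminus i\setminus k}$ (with $i\in[l]\setminus t$, $k\in A\setminus[l+1]$) also needs the degenerate cases $\epsilon_{A\cup(l+1)\setminus i}$ (for $i\in A\setminus[l+1]$, where the removed pair includes $t$) and $\epsilon_{A\cup t\setminus i}$ (for $i\in[l]\setminus t$, where the removed pair includes $l+1$), which the paper treats as separate bullet points because their cancellations use different sign computations. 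Neither error is fatal to the approach, but since the content of this lemma is entirely in executing the bookkeeping correctly, they are exactly the kind of slip that would propagate.
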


\begin{proposition}\label{prop:homotopy}
    For any $1\leq s\leq r$, we have
    \[
    \partial\sigma_{\mathbf{e}_s}+\sigma_{\mathbf{e}_s}\partial=a_s.
    \]
\end{proposition}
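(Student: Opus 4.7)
The plan is to verify the identity $\partial\sigma_{\mathbf{e}_s}+\sigma_{\mathbf{e}_s}\partial=a_s$ by evaluating both sides on an arbitrary basis element $\epsilon_A$ of $\mathcal{T}_{1,\dots,l}$, where $A\in\mathcal{P}([q])$ satisfies $A\not\supseteq [l]$. I would organize the argument according to the size of $A\cap[l]$. Since $A\not\supseteq[l]$, we have either $|A\cap[l]|=l-1$ or $|A\cap[l]|\leq l-2$, and this is exactly the partition along which the defining formula for $\sigma_{\mathbf{e}_s}(\epsilon_A)$ branches.

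When $|A\cap[l]|=l-1$, write $[l]\setminus A=\{t\}$. The two subcases $l+1\in A$ and $l+1\notin A$ are precisely the hypotheses of \cref{lem:Ahasl+1del} and \cref{lem:Adoesnothavel+1del}, respectively, so the identity $(\partial\sigma_{\mathbf{e}_s}+\sigma_{\mathbf{e}_s}\partial)(\epsilon_A)=a_s\epsilon_A$ is immediate in this regime.

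When $|A\cap[l]|\leq l-2$, for any $i\in A$ we have $|(A\setminus i)\cap[l]|\leq l-2\neq l-1$, so the first branch of the formula for $\sigma_{\mathbf{e}_s}$ governs both $\sigma_{\mathbf{e}_s}(\epsilon_A)$ and each $\sigma_{\mathbf{e}_s}(\epsilon_{A\setminus i})$ appearing in $\sigma_{\mathbf{e}_s}\partial(\epsilon_A)$. Moreover, for any $j\notin A$, the set $A\cup j$ still fails to contain $[l]$ (otherwise $A$ would contain $[l]\setminus j$, forcing $|A\cap[l]|\geq l-1$), so every $\epsilon_{A\cup j}$ that appears genuinely lies in the pivot complex. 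Consequently, the expansion of $(\partial\sigma_{\mathbf{e}_s}+\sigma_{\mathbf{e}_s}\partial)(\epsilon_A)$ uses only the first branch of $\sigma_{\mathbf{e}_s}$, and the formula matches Sobieska's for the Taylor resolution verbatim. The identity then follows from the computation in \cite[Theorem 3.2]{Sob23}: expand both compositions, collect the coefficient of each basis element $\epsilon_{A\cup j\setminus i}$, apply parts (3) and (4) of \cref{lemma2.1} to cancel the off-diagonal contributions, and observe that the coefficient of $\epsilon_A$ telescopes to $\sum_{j\in[q]}a_{sj}m_j=a_s$.

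The main obstacle is not the algebra itself, which is handled cleanly by the sign identities of \cref{lemma2.1}, but the bookkeeping needed to confirm that no intermediate expression in the low-intersection case escapes $\mathcal{T}_{1,\dots,l}$. The cardinality count in the previous paragraph resolves this: the only way $\sigma_{\mathbf{e}_s}$ or $\partial$ could produce a forbidden basis element is via an index set containing $[l]$, and $|A\cap[l]|\leq l-2$ forecloses that possibility for every term appearing in the composition. With both cases settled on every basis element, the proposition follows.
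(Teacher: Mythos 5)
Your proposal is correct and follows essentially the same route as the paper: split by $|A\cap[l]|$, invoke \cref{lem:Ahasl+1del} and \cref{lem:Adoesnothavel+1del} when $|A\cap[l]|=l-1$, and appeal to Sobieska's computation when $|A\cap[l]|\leq l-2$. Your additional bookkeeping, verifying that every $\epsilon_{A\cup j}$ and $\epsilon_{A\setminus i}$ arising in the composition stays inside $\mathcal{T}_{1,\dots,l}$ and that only the first branch of $\sigma_{\mathbf{e}_s}$ is ever invoked, makes explicit a point the paper's proof leaves implicit, but the overall argument is the same.
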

\begin{proof}
    Given $A\nsupseteq [l]$. If $|A\cap [l]| \leq l-2$, then the result follows from the proof of \cite[Theorem 3.2]{Sob23}, since the only formula for $(\sigma_{e_s})$ we will use is its first case, the same as that of the formula given in \cite{Sob23}. So now we can assume that $[l]\setminus A =\{t\}$ for some integer $t$. The result now follows from Lemmas \ref{lem:Ahasl+1del} and \ref{lem:Adoesnothavel+1del}.
\end{proof}

\begin{lemma}\label{lem:Ahasl+1sigma}
    For any $1\leq s<s'\leq r$ and any $A\nsupseteq [l]$ such that $|A\cap [l]|=l-2$ and $l+1\in A$, we have
    \[ (\sigma_{\mathbf{e}_s}\sigma_{\mathbf{e}_{s'}}+\sigma_{\mathbf{e}_{s'}}\sigma_{\mathbf{e}_s})(\epsilon_A)=0.
    \]
\end{lemma}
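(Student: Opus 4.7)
The plan is to directly expand both $\sigma_{\mathbf{e}_s}\sigma_{\mathbf{e}_{s'}}(\epsilon_A)$ and $\sigma_{\mathbf{e}_{s'}}\sigma_{\mathbf{e}_s}(\epsilon_A)$ and to collect, for each target basis element $\epsilon_B$, the coefficients appearing in the sum. Write $[l] \setminus A = \{t_1,t_2\}$. Since $|A\cap[l]|=l-2\neq l-1$, the first application of $\sigma_{\mathbf{e}_\bullet}$ to $\epsilon_A$ uses \emph{only} the first case of the defining formula, producing a sum over $j\notin A$ of terms landing in $\epsilon_{A\cup j}$. The second application then splits according to whether $j\in\{t_1,t_2\}$: if $j\notin\{t_1,t_2\}$ then $|(A\cup j)\cap[l]|=l-2$ and case~(1) applies again; if $j=t_k$ then $[l]\setminus(A\cup j)=\{t_{3-k}\}$ and $l+1\in A\cup j$, so case~(2) applies. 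Note that case~(3) is never triggered, since $l+1\in A$ throughout.

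I would next observe that the only basis elements that can appear are $\epsilon_{A\cup\{i,j\}}$ with $i\neq j$ both outside $A$. The case $\{i,j\}=\{t_1,t_2\}$ is immediate: either path would have to produce $\epsilon_{A\cup t_1\cup t_2}$ from $\epsilon_{A\cup t_k}$ via case~(2), whose sum explicitly excludes the index $t_{3-k}$, so the contribution is zero on both sides. For the remaining pairs $\{i,j\}$, there are exactly two gradient paths to $\epsilon_{A\cup\{i,j\}}$, namely through $\epsilon_{A\cup i}$ and through $\epsilon_{A\cup j}$. A direct computation (splitting into the subcases $i,j\notin[l]$ versus exactly one of $i,j$ lying in $\{t_1,t_2\}$) shows that, up to a common factor of $\tfrac{m_im_jm_A}{m_{A\cup\{i,j\}}}$, each path yields a coefficient of the form
\[
\sign(p,A)\,\sign(q,A\cup p)\,a_{\bullet p}a_{\bullet' q},
\]
where $\{p,q\}=\{i,j\}$ and the two subscripts on $a$ come from $s,s'$ in one order for $\sigma_{\mathbf{e}_s}\sigma_{\mathbf{e}_{s'}}$ and in the opposite order for $\sigma_{\mathbf{e}_{s'}}\sigma_{\mathbf{e}_s}$.

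The cancellation is then forced by the identity
\[
\sign(i,A)\sign(j,A\cup i)+\sign(j,A)\sign(i,A\cup j)=0,
\]
which is exactly \cref{lemma2.1}(4) applied to the set $C=A\cup\{i,j\}$, together with the commutativity of $Q$ (so that $a_{si}a_{s'j}=a_{s'j}a_{si}$ etc.). Pairing the $\sigma_{\mathbf{e}_s}\sigma_{\mathbf{e}_{s'}}$ contribution of each path with the path in $\sigma_{\mathbf{e}_{s'}}\sigma_{\mathbf{e}_s}$ that traverses the same intermediate vertex gives zero, which establishes the claim.

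The main obstacle is bookkeeping in the mixed subcase where one of $i,j$ equals $t_k$: there, one of the two gradient paths applies case~(1) of the formula for $\sigma_{\mathbf{e}_\bullet}$ while the other applies case~(2), so one must verify that the sign factors still match the symmetric pattern needed for \cref{lemma2.1}(4) to do the cancellation. Once this is confirmed, the calculation is essentially the one already carried out by Sobieska for the Taylor resolution, and the rest is routine.
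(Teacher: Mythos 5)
Your overall structure mirrors the paper's proof: expand $\sigma_{\mathbf{e}_s}\sigma_{\mathbf{e}_{s'}}(\epsilon_A)$ by first applying case~(1), split the second application by whether the first index lies in $\{t_1,t_2\}$, collect by target $\epsilon_{A\cup\{i,j\}}$, and cancel using \cref{lemma2.1}(4). Your observation about $\{i,j\}=\{t_1,t_2\}$ is sound, and the ``main obstacle'' you flag is in fact trivial: when $l+1\in A$, case~(2) is literally case~(1) with the single illegal term $j=t$ (which would land on $\epsilon_{A\cup[l]}$, not a basis element) removed, so the sign coefficients in the two cases are identical and no extra verification is needed.

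However, the explicit cancellation you describe is wrong. You claim that pairing the path of $\sigma_{\mathbf{e}_s}\sigma_{\mathbf{e}_{s'}}$ through a given intermediate vertex $\epsilon_{A\cup p}$ with the path of $\sigma_{\mathbf{e}_{s'}}\sigma_{\mathbf{e}_s}$ through the \emph{same} intermediate vertex gives zero. It does not: those two terms carry coefficient $\sign(p,A)\sign(q,A\cup p)\big(a_{s'p}a_{sq}+a_{sp}a_{s'q}\big)$, which has no reason to vanish. The actual pairwise cancellation, as in the paper (where the pairs are $F_1(s,s')+F_3(s',s)$, $F_3(s,s')+F_1(s',s)$, etc.), is between paths through \emph{opposite} intermediate vertices: the path $A\to A\cup p\to A\cup\{p,q\}$ in $\sigma_{\mathbf{e}_s}\sigma_{\mathbf{e}_{s'}}$ has coefficient $\sign(p,A)\sign(q,A\cup p)\,a_{s'p}a_{sq}$, and the path $A\to A\cup q\to A\cup\{p,q\}$ in $\sigma_{\mathbf{e}_{s'}}\sigma_{\mathbf{e}_s}$ has coefficient $\sign(q,A)\sign(p,A\cup q)\,a_{sq}a_{s'p}$; these share the monomial $a_{s'p}a_{sq}$ and cancel precisely by \cref{lemma2.1}(4). (Equivalently one can sum all four contributions to a given target and factor, but ``same vertex'' pairing alone does not work, and in the companion case $s=s'$ of \cref{prop:square-zero-2} it collapses to a single nonzero term rather than a cancelling pair.) Fix the pairing and the proof goes through.
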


\begin{proof}
    Set $\{u,v\} = [l]\setminus A$. Then we have
    \begin{align*}
        &\ \ \ \ (\sigma_{\mathbf{e}_s}\sigma_{\mathbf{e}_{s'}}) (\epsilon_A) \\
        &= \sigma_{\mathbf{e}_s} \big( \sum_{j\notin A} \sign(j,A) a_{s'j}\frac{m_j m_A}{m_{A\cup j}}  \epsilon_{A\cup j}\big)\\
        &= \sigma_{\mathbf{e}_s} \big( \sign(u,A) a_{s'u}\frac{m_u m_A}{m_{A\cup u}}   \epsilon_{A\cup u}\big) +\sigma_{\mathbf{e}_s} \big( \sign(v,A) a_{s'v}m_v \frac{m_v m_A}{m_{A\cup v}}   \epsilon_{A\cup v}\big) + \sigma_{\mathbf{e}_s} \big( \sum_{j\notin A\cup u\cup v} \sign(j,A) a_{s'j}\frac{m_jm_A}{m_{A\cup j}}    \epsilon_{A\cup j}\big)\\
        &= \!\begin{multlined}[t][.5\displaywidth]
                \big(\sign(u,A)a_{s'u} \sum_{j\notin A\cup u \cup v} \sign(j,A\cup u) a_{sj}\frac{m_u m_j m_A}{m_{A\cup u\cup j}}   \epsilon_{A\cup u \cup j} \big) + \big( \sign(v,A)a_{s'v}\sum_{j\notin A\cup u \cup v} \sign(j,A\cup v) a_{sj}\frac{m_v m_j m_A}{m_{A\cup v\cup j}}   \epsilon_{A\cup v \cup j} \big)\\
                +\big( \sum_{j\notin A\cup u\cup v} \sign(j,A) a_{s'j} \sum_{i\notin A\cup j} \sign(i,A\cup j) a_{si} \frac{m_j m_i m_A}{m_{A\cup j\cup i}}   \epsilon_{A\cup j\cup i}  \big)
        \end{multlined}\\
        &= \!\begin{multlined}[t][.5\displaywidth]
                \big(\sum_{j\notin A\cup u \cup v} 
                \sign(u,A) \sign(j,A\cup u) a_{sj}a_{s'u}\frac{m_u m_j m_A}{m_{A\cup u\cup j}}  \epsilon_{A\cup u \cup j} \big) + \big( \sum_{j\notin A\cup u \cup v}\sign(v,A) \sign(j,A\cup v) a_{sj}a_{s'v}\frac{m_v m_j m_A}{m_{A\cup v\cup j}}  \epsilon_{A\cup v \cup j} \big)\\
                +\big( \sum_{j\notin A\cup u\cup v} \sign(j,A)  \sign(u,A\cup j) a_{s'j}a_{su}\frac{m_u m_j m_A}{m_{A\cup u\cup j}}   \epsilon_{A\cup u\cup j}  \big) + \big( \sum_{j\notin A\cup u\cup v} \sign(j,A)\sign(v,A\cup j)  a_{s'j}a_{sv}\frac{m_v m_j m_A}{m_{A\cup v\cup j}}  \epsilon_{A\cup v\cup j}  \big)\\
                +\big( \sum_{i,j\notin A\cup u\cup v, i\neq j} \sign(j,A)\sign(i,A\cup j) a_{s'j}a_{si}\frac{m_j m_i m_A}{m_{A\cup j\cup i}}    \epsilon_{A\cup j\cup i}  \big) +\big( \sum_{i,j\notin A\cup u\cup v, i\neq j} \sign(i,A)\sign(j,A\cup i) a_{s'i}a_{sj}\frac{m_j m_i m_A}{m_{A\cup j\cup i}}    \epsilon_{A\cup j\cup i}  \big).
        \end{multlined}
    \end{align*}
    We denote this expression by $F(s,s')$. There are 6 summands in it, and we will call them $F_i(s,s')$ for $i\in [6]$, in that order. The expression for $(\sigma_{\mathbf{e}_{s'}}\sigma_{\mathbf{e}_{s}})(\epsilon_A)$ is obtained by switching $s,s'$ in the previous calculation. Now, using \cref{lemma2.1} (4), we observe that
    \begin{align*}
        &F_1(s,s') + F_3(s',s) =0,
        &F_2(s,s') + F_4(s',s) =0,\quad \quad \quad \quad \quad
        &F_3(s,s') + F_1(s',s) =0,\\
        &F_4(s,s') + F_2(s',s) =0,
        &F_5(s,s') + F_6(s',s) =0,\quad \quad \quad \quad \quad
        &F_6(s,s') + F_5(s',s) =0.
    \end{align*}
    Thus $(\sigma_{\mathbf{e}_s}\sigma_{\mathbf{e}_{s'}}+\sigma_{\mathbf{e}_{s'}}\sigma_{\mathbf{e}_s})(\epsilon_A)= F(s,s')+F(s',s) =0$, as required.\qedhere    
\end{proof}

Again, following similar computations, we get the next few results. We provide most of the proofs in the \hyperref[app]{Appendix}. 

\begin{lemma}\label{lem:Adoesnothavel+1sigma}
    For any $1\leq s<s'\leq r$ and any $A\nsupseteq [l]$ such that $|A\cap [l]|=l-2$ and $l+1\notin A$, we have
    \[(\sigma_{\mathbf{e}_s}\sigma_{\mathbf{e}_{s'}}+\sigma_{\mathbf{e}_{s'}}\sigma_{\mathbf{e}_s})(\epsilon_A)=0.
    \]
\end{lemma}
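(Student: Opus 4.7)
The plan is to imitate the strategy of \cref{lem:Ahasl+1sigma}, expanding $\sigma_{\mathbf{e}_{s'}}(\epsilon_A)$ via case 1 of the definition and then applying $\sigma_{\mathbf{e}_s}$ term by term, tracking which case is triggered for each resulting basis element. Write $[l]\setminus A=\{u,v\}$ with $u<v$. Since $|A\cap[l]|=l-2$ and $l+1\notin A$, the set $A\cup j$ satisfies $|[l]\setminus(A\cup j)|=l-1$ precisely when $j\in\{u,v\}$ (and $l+1\notin A\cup j$ in that case because $j\in[l]$), so applying $\sigma_{\mathbf{e}_s}$ to those two summands forces us into case 3 and introduces pivot terms indexed by $(A\cup u\cup v\cup(l+1))\setminus i$ for $i\in[l]$. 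For all other $j\notin A$ (namely $j=l+1$ or $j>l+1$), we still have $|[l]\setminus(A\cup j)|=l-2$, so case 1 of $\sigma_{\mathbf{e}_s}$ applies; these contributions have the exact shape of those in \cref{lem:Ahasl+1sigma}.

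The sum $\sigma_{\mathbf{e}_s}\sigma_{\mathbf{e}_{s'}}(\epsilon_A)$ then splits naturally into a ``regular part'' supported on basis elements of the form $\epsilon_{A\cup j\cup k}$ with $\{j,k\}\not\supseteq\{u,v\}$ only in a controlled way, and a ``pivot part'' supported on $\epsilon_{A\cup u\cup v\cup(l+1)\setminus i}$, $i\in[l]$. The regular part is identical in structure to the six-term expression $F(s,s')$ appearing in the proof of \cref{lem:Ahasl+1sigma} (with $l+1\in A$ there replaced by the bookkeeping here that simply treats $l+1$ as one of the generic indices $j\notin A\cup u\cup v$). Consequently, the same pairwise cancellations $F_1(s,s')+F_3(s',s)=0$, etc., using \cref{lemma2.1}(4) and (2), force the regular part of $\sigma_{\mathbf{e}_s}\sigma_{\mathbf{e}_{s'}}(\epsilon_A)+\sigma_{\mathbf{e}_{s'}}\sigma_{\mathbf{e}_s}(\epsilon_A)$ to vanish.

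The new ingredient is handling the pivot terms. For each fixed $i\in[l]$, the coefficient of $\epsilon_{A\cup u\cup v\cup(l+1)\setminus i}$ in $\sigma_{\mathbf{e}_s}\sigma_{\mathbf{e}_{s'}}(\epsilon_A)$ collects two contributions, one through $\epsilon_{A\cup u}$ (carrying $a_{s'u}a_{sv}$) and one through $\epsilon_{A\cup v}$ (carrying $a_{s'v}a_{su}$). Adding the analogous contributions from $\sigma_{\mathbf{e}_{s'}}\sigma_{\mathbf{e}_s}(\epsilon_A)$ and factoring out the common quantity $(-1)^{l+1}\sign(i,A\cup u\cup v\setminus i)\,\frac{m_um_vm_A}{m_{A\cup u\cup v\cup(l+1)\setminus i}}$, the remaining sign factor becomes
\[
\bigl[\sign(u,A)\sign(v,A\cup u)+\sign(v,A)\sign(u,A\cup v)\bigr]\bigl(a_{su}a_{s'v}+a_{sv}a_{s'u}\bigr),
\]
and the bracket vanishes by the first identity in \cref{lemma2.1}(4) (applied with $\{i,j\}=\{u,v\}$).

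I expect the main obstacle to be purely bookkeeping: carefully separating which $j$-values in the expansion of $\sigma_{\mathbf{e}_{s'}}(\epsilon_A)$ trigger case 3 versus case 1 of $\sigma_{\mathbf{e}_s}$, and matching up the pivot summands between $\sigma_{\mathbf{e}_s}\sigma_{\mathbf{e}_{s'}}$ and $\sigma_{\mathbf{e}_{s'}}\sigma_{\mathbf{e}_s}$ so that the graded-commutativity of the $a_{sj}$'s and the sign identity of \cref{lemma2.1}(4) can be applied cleanly. Once the pivot cancellation is isolated, the remainder of the argument is essentially a reprint of the proof of \cref{lem:Ahasl+1sigma}, which can be referenced rather than rewritten.
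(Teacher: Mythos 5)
Your proposal is correct and follows the same route as the paper: expand $\sigma_{\mathbf{e}_{s'}}(\epsilon_A)$ via case~1, observe that applying $\sigma_{\mathbf{e}_s}$ to $\epsilon_{A\cup u}$ and $\epsilon_{A\cup v}$ triggers case~3 while the remaining summands stay in case~1, and split the result into a regular part (which cancels exactly as the six-term expression in \cref{lem:Ahasl+1sigma}, with $l+1$ now just another index in the range) and a pivot part supported on $\epsilon_{A\cup u\cup v\cup(l+1)\setminus i}$. The paper labels these pieces $G_1$ and $G_2$ and cancels the pivot part via precisely the factorization you write, so the two arguments coincide up to notation (and a small typo in your text where $|[l]\setminus(A\cup j)|$ should be $1$ rather than $l-1$).
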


\begin{lemma}\label{lem:l-1Ahasl+1sigma}
    For any $1\leq s<s'\leq r$ and any $A\nsupseteq [l]$ such that $|A\cap [l]|=l-1$ and $l+1\in A$, we have
    \[
(\sigma_{\mathbf{e}_s}\sigma_{\mathbf{e}_{s'}}+\sigma_{\mathbf{e}_{s'}}\sigma_{\mathbf{e}_s})(\epsilon_A)=0.
    \]
\end{lemma}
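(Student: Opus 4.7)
The plan is to expand the composition directly, observe that we remain in the second case of the definition of $\sigma_{\mathbf{e}_s}$ throughout, and then pair terms by unordered pairs $\{j,k\}$ so that the sign identities from \cref{lemma2.1} force cancellation. This mirrors the computation in \cref{lem:Ahasl+1sigma} but is substantially cleaner because no branching into different cases of the formula occurs.

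Write $[l]\setminus A=\{t\}$. Since $l+1\in A$ and $A\supseteq [l]\setminus\{t\}$, any $j\notin A\cup t$ must satisfy $j>l+1$. Thus $A\cup j$ still satisfies $[l]\setminus(A\cup j)=\{t\}$ and $l+1\in A\cup j$, so applying $\sigma_{\mathbf{e}_{s'}}$ (or $\sigma_{\mathbf{e}_s}$) to $\epsilon_{A\cup j}$ again uses only the second branch of the formula, and the summation index $k\notin (A\cup j)\cup t$ again forces $k>l+1$. Consequently,
\begin{align*}
(\sigma_{\mathbf{e}_{s'}}\sigma_{\mathbf{e}_s})(\epsilon_A)
&=\sum_{\substack{j,k\notin A\cup t\\ j\neq k}} \sign(j,A)\sign(k,A\cup j)\, a_{sj}a_{s'k}\,\frac{m_jm_km_A}{m_{A\cup j\cup k}}\,\epsilon_{A\cup j\cup k},
\end{align*}
and similarly for $(\sigma_{\mathbf{e}_s}\sigma_{\mathbf{e}_{s'}})(\epsilon_A)$ with $s,s'$ swapped.

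Next, I would fix an unordered pair $\{j,k\}\subseteq [q]\setminus(A\cup t)$ and collect the coefficient of $\epsilon_{A\cup j\cup k}$ in the sum $\sigma_{\mathbf{e}_s}\sigma_{\mathbf{e}_{s'}}+\sigma_{\mathbf{e}_{s'}}\sigma_{\mathbf{e}_s}$. After factoring out the common monomial $\dfrac{m_jm_km_A}{m_{A\cup j\cup k}}$, this coefficient is
\[
\bigl[\sign(j,A)\sign(k,A\cup j)+\sign(k,A)\sign(j,A\cup k)\bigr]\bigl(a_{sj}a_{s'k}+a_{s'j}a_{sk}\bigr).
\]
The remaining task is to check that the first bracket vanishes. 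Using \cref{lemma2.1}(1), $\sign(k,A\cup j)=-\sign(k,A)$ if $j<k$ and $\sign(k,A\cup j)=\sign(k,A)$ if $j>k$, with analogous statements for $\sign(j,A\cup k)$; a two-line case check then gives the cancellation. (Equivalently, one can apply \cref{lemma2.1}(3) with $A=\{j\}$, $B=A$, $C=\{k\}$ to rewrite one summand in terms of the other.)

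The main obstacle is purely bookkeeping: one has to be vigilant that neither $j$ nor $k$ can land in $[l+1]$, so that both applications of $\sigma_{\mathbf{e}_\bullet}$ stay in the second branch of the piecewise definition and no ``pivot'' terms (as in the third case) ever appear. Once this is verified, the sign cancellation is identical in spirit to the $F_5(s,s')+F_6(s',s)=0$ step in the proof of \cref{lem:Ahasl+1sigma}, and the conclusion follows immediately.
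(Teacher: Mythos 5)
Your argument is correct and follows the same route as the paper: expand the double composition, noting that only the second branch of the formula is ever invoked (since $j\notin A\cup t$ forces $j>l+1$), then cancel the resulting coefficients via the sign identity, which is exactly \cref{lemma2.1}(4). The only cosmetic difference is that you re-derive the sign cancellation from \cref{lemma2.1}(1) by a short case check, whereas the paper cites \cref{lemma2.1}(4) directly.
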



\begin{lemma}\label{lem:l-1Adoesnothavel+1sigma}
    For any $1\leq s<s'\leq r$ and any $A\nsupseteq [l]$ such that $|A\cap [l]|=l-1$ and $l+1\notin A$, we have
    \[
    (\sigma_{\mathbf{e}_s}\sigma_{\mathbf{e}_{s'}}+\sigma_{\mathbf{e}_{s'}}\sigma_{\mathbf{e}_s})(\epsilon_A)=0.
    \]
\end{lemma}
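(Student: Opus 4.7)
Since $|A\cap[l]|=l-1$ and $l+1\notin A$, write $[l]\setminus A=\{t\}$. The formula for $\sigma_{\mathbf{e}_s}(\epsilon_A)$ falls under case~3 of \Cref{thm:system-pivot}, so we decompose $\sigma_{\mathbf{e}_s}(\epsilon_A)=X_s+Y_s$, where $X_s$ is the regular sum (over $j\notin A\cup t$) and $Y_s$ is the pivot sum (over $i\in[l]$). Since $A\supseteq[l]\setminus\{t\}$, the indices in $X_s$ satisfy $j\geq l+1$. We further split $X_s=X_s^{(1)}+X_s^{(2)}$ by isolating $j=l+1$ from $j\geq l+2$. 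Applying $\sigma_{\mathbf{e}_{s'}}$ to each piece: summands of $X_s^{(2)}$ produce $\epsilon_{A\cup j}$, which satisfies the case~3 hypothesis again; the summand $X_s^{(1)}$ produces $\epsilon_{A\cup (l+1)}$, which falls under case~2; and every summand of $Y_s$ produces $\epsilon_{A\cup t\cup(l+1)\setminus i}$, which also falls under case~2.

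The plan is to fully expand $(\sigma_{\mathbf{e}_{s'}}\sigma_{\mathbf{e}_s}+\sigma_{\mathbf{e}_s}\sigma_{\mathbf{e}_{s'}})(\epsilon_A)$ and group summands by basis element. Because each $\sigma_{\mathbf{e}_{\bullet}}$ either adds a single index or adds $\{t,l+1\}$ and removes an element of $[l]$, one verifies by direct inspection that the resulting basis elements belong to exactly three families:
\begin{enumerate}
\item[(I)] $\epsilon_{A\cup\{j,k\}}$ with $j,k\geq l+2$, $j\neq k$, both outside $A$;
\item[(II)] $\epsilon_{A\cup (l+1)\cup p}$ with $p\geq l+2$ and $p\notin A$;
\item[(III)] $\epsilon_{A\cup t\cup(l+1)\setminus i\cup p}$ with $i\in[l]\setminus\{t\}$ and $p\geq l+2$, $p\notin A$.
\end{enumerate}
Family (I) collects only contributions from two successive regular parts of case~3 and cancels pairwise exactly as in \cite[Theorem 3.2]{Sob23} via \Cref{lemma2.1}(4). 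Family (II) receives four kinds of contributions on each side: from $X^{(1)}$ followed by the regular part of case~2; from $X^{(2)}$ followed by the regular part of case~3 at $k=l+1$; from $X^{(2)}$ followed by the $i=t$ summand of the case-3 pivot; and from the $i=t$ summand of $Y$ followed by case~2. These pair up into two quadruples that cancel using \Cref{lemma2.1}(1) (noting $l+1<p$) and \Cref{lemma2.1}(4). Family (III) receives two contributions on each side: from $X^{(2)}$ at $j=p$ followed by the case-3 pivot at the given index $i$, and from $Y$ at that same $i$ followed by case~2 at $k=p$.

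The main obstacle lies in family (III), where the coefficient reduces to establishing
\[
\sign(p,A)\sign(t,A\cup p)\sign(i,A\cup p\cup t\setminus i) + \sign(t,A)\sign(i,A\cup t\setminus i)\sign(p, A\cup t\cup(l+1)\setminus i)=0.
\]
This identity follows from \Cref{lemma2.1}(1) by first observing that $\sign(t,A\cup p)=\sign(t,A)$ and $\sign(i,A\cup p\cup t\setminus i)=\sign(i,A\cup t\setminus i)$ since the extra index $p$ satisfies $p>l\geq i,t$ and hence is larger than the pivot point, and then computing $\sign(p,A\cup t\cup(l+1)\setminus i)=-\sign(p,A)$ because the set $A\cup t\cup(l+1)\setminus i$ differs from $A$ by adding the two indices $\{t,l+1\}$ and removing $i$, all three of which lie below $p$, producing a net sign change of $(-1)^{2-1}=-1$. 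Once this is established, the two contributions to each basis element of family (III) cancel, completing the proof. The overall structure mirrors \Cref{lem:Ahasl+1sigma,lem:l-1Ahasl+1sigma}, with an additional family (III) arising precisely because here the input $\epsilon_A$ already requires the case-3 pivot formula.
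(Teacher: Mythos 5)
Your proposal is correct and follows essentially the same strategy as the paper: fully expand $(\sigma_{\mathbf{e}_s}\sigma_{\mathbf{e}_{s'}}+\sigma_{\mathbf{e}_{s'}}\sigma_{\mathbf{e}_s})(\epsilon_A)$, group by basis element, and cancel via the sign identities of \Cref{lemma2.1}. The only difference is organizational — you sort contributions into families (I)--(III) of target basis elements while the paper sorts into its terms $H_1,\dots,H_5$ (and simplifies signs earlier), but the underlying cancellations, including your family (III) identity, coincide with the paper's $H_3(s,s')+H_5(s',s)=0$ pairing.
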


\begin{proposition}\label{prop:square-zero-1}
    For any $1\leq s<s'\leq r$, we have
    \[
    \sigma_{\mathbf{e}_s}\sigma_{\mathbf{e}_{s'}}+\sigma_{\mathbf{e}_{s'}}\sigma_{\mathbf{e}_s}=0.
    \]
\end{proposition}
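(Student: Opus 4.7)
The plan is to verify the identity $\sigma_{\mathbf{e}_s}\sigma_{\mathbf{e}_{s'}}+\sigma_{\mathbf{e}_{s'}}\sigma_{\mathbf{e}_s}=0$ on each basis element $\epsilon_A$ of $\mathcal{T}_{1,\ldots,l}$, i.e.\ on each $A\subseteq [q]$ with $A\not\supseteq [l]$; since both compositions are $Q$-linear, this reduction is harmless. The formula defining $\sigma_{\mathbf{e}_s}$ is piecewise with three branches keyed to $|A\cap [l]|$ and to whether $l+1\in A$, so the natural approach is a case analysis on the value of $|A\cap [l]|\in \{0,1,\ldots,l-1\}$.

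First, when $|A\cap [l]|\leq l-3$, applying $\sigma_{\mathbf{e}_{s'}}$ to $\epsilon_A$ produces a sum of terms $\epsilon_{A\cup j}$ with $|A\cup j\cap [l]|\leq l-2$, so the subsequent $\sigma_{\mathbf{e}_s}$ on each of these also uses only the first (Taylor-like) branch of the formula. In this range the computation is identical to its Taylor analog, and the vanishing follows from the proof of \cite[Theorem~3.2]{Sob23}, exactly as was invoked in the proof of \cref{prop:homotopy}.

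The remaining cases are precisely the ones in which the pivot-specific branches of the formula intervene, and each has been handled by one of the four preceding lemmas: $|A\cap [l]|=l-2$ with $l+1\in A$ is \cref{lem:Ahasl+1sigma}, $|A\cap [l]|=l-2$ with $l+1\notin A$ is \cref{lem:Adoesnothavel+1sigma}, $|A\cap [l]|=l-1$ with $l+1\in A$ is \cref{lem:l-1Ahasl+1sigma}, and $|A\cap [l]|=l-1$ with $l+1\notin A$ is \cref{lem:l-1Adoesnothavel+1sigma}. Stringing these four lemmas together with the reduction to Sobieska's calculation exhausts all $A\not\supseteq [l]$ (note that $|A\cap [l]|\leq l-1$ is automatic under this hypothesis), and the proposition follows.

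The hard work---the sign bookkeeping via \cref{lemma2.1} and the pairwise cancellation of the summands produced by the two orderings $(s,s')$ and $(s',s)$---has already been carried out inside the individual lemmas, so this proposition is essentially a bookkeeping exercise built on top of them. If any obstacle arises, it will only be a careful verification that the case partition is exhaustive and that each branch of the piecewise formula has been matched correctly to the hypotheses of the lemma invoked.
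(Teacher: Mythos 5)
Your proof follows exactly the same case decomposition and uses the same lemmas as the paper: the $|A\cap [l]|\le l-3$ case is delegated to Sobieska's Taylor-resolution computation (you add the small but correct observation that $\sigma_{\mathbf{e}_{s'}}$ keeps such $A$ within the first branch, so $\sigma_{\mathbf{e}_s}$ also only uses it), and the $|A\cap [l]|\in\{l-2,l-1\}$ cases are handled by \cref{lem:Ahasl+1sigma,lem:Adoesnothavel+1sigma,lem:l-1Ahasl+1sigma,lem:l-1Adoesnothavel+1sigma} respectively. This matches the paper's argument.
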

\begin{proof}
    Given $A\nsupseteq [l]$. If $|A\cap [l]| \leq l-3$, then the result follows from the proof of \cite[Theorem 3.2]{Sob23}, since the only formula for $(\sigma_{e_s})$ we will use is its first case, the same as that of the formula given in \cite{Sob23}. The remaining two cases are when $|A\cap [l]|$ equals either $l-2$ or $l-1$. The result then follows from Lemmas \ref{lem:Ahasl+1sigma}, \ref{lem:Adoesnothavel+1sigma}, \ref{lem:l-1Ahasl+1sigma} and \ref{lem:l-1Adoesnothavel+1sigma}.
\end{proof}

\begin{proposition}\label{prop:square-zero-2}
    For any $1\leq s\leq r$, we have
    \[
    \sigma_{\mathbf{e}_s}^2=0.
    \]
\end{proposition}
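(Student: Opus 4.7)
The plan is to mirror Proposition \ref{prop:square-zero-1} closely, specializing its sign-cancellation arguments to the case $s = s'$. First I would fix $A \nsupseteq [l]$ and split into cases according to $|A \cap [l]|$. If $|A \cap [l]| \leq l-3$, then both invocations of $\sigma_{\mathbf{e}_s}$ in $\sigma_{\mathbf{e}_s}^2(\epsilon_A)$ must fall into the first case of the defining formula: starting from $A$ we have $|A \cap [l]| \neq l-1$, and after adjoining any single index $j$ the value of $|(A \cup j) \cap [l]|$ increases by at most one, so it stays $\leq l-2$, again avoiding the $l-1$ case. Hence the double composition reduces verbatim to the Taylor setting and $\sigma_{\mathbf{e}_s}^2(\epsilon_A) = 0$ follows from \cite[Theorem 3.2]{Sob23}.

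For the remaining cases $|A \cap [l]| \in \{l-2, l-1\}$, with subcases according to whether $l+1 \in A$, I would revisit the four computations carried out in Lemmas \ref{lem:Ahasl+1sigma}, \ref{lem:Adoesnothavel+1sigma}, \ref{lem:l-1Ahasl+1sigma}, and \ref{lem:l-1Adoesnothavel+1sigma}. In each of those lemmas, $\sigma_{\mathbf{e}_s}\sigma_{\mathbf{e}_{s'}}(\epsilon_A)$ was expanded into six summands $F_i(s,s')$, and the sum $\sigma_{\mathbf{e}_s}\sigma_{\mathbf{e}_{s'}} + \sigma_{\mathbf{e}_{s'}}\sigma_{\mathbf{e}_s}$ was shown to vanish via pairwise identities like $F_1(s,s') + F_3(s',s) = 0$, $F_2(s,s') + F_4(s',s) = 0$, and $F_5(s,s') + F_6(s',s) = 0$. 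The crucial observation is that each of these cancellations rests entirely on the sign identity \cref{lemma2.1}(4): the coefficients $a_{s?}a_{s'?}$ factor out cleanly from each paired sum, and the remaining scalar is $\sign(u,A)\sign(j,A\cup u) + \sign(j,A)\sign(u,A\cup j) = 0$, independent of the subscripts $s, s'$. Specializing $s' = s$ therefore yields the term-by-term identities $F_1(s,s) + F_3(s,s) = 0$, $F_2(s,s) + F_4(s,s) = 0$, $F_5(s,s) + F_6(s,s) = 0$, and hence $\sigma_{\mathbf{e}_s}^2(\epsilon_A) = \sum_{i=1}^{6} F_i(s,s) = 0$ directly, with no $2$ factor to divide by and no restriction on the characteristic.

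The main thing to watch is that each cancellation already holds term-by-term before any symmetrization in $s, s'$; this is the case because each pair $F_i(s,s') \leftrightarrow F_j(s',s)$ is indexed over the same basis elements with the same products of the $a_{s?}, m_?$ factors on both sides, the only difference being the sign identity. Consequently no genuine obstacle is expected: the proof is a faithful repetition of the preceding four lemmas with the substitution $s' = s$, and the write-up can simply invoke those lemmas' calculations together with \cite[Theorem 3.2]{Sob23} for the $|A \cap [l]| \leq l-3$ range.
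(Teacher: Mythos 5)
Your proposal is correct and matches the paper's proof: the paper also obtains $\sigma_{\mathbf{e}_s}^2=0$ by setting $s'=s$ in the expansions from Lemmas \ref{lem:Ahasl+1sigma}--\ref{lem:l-1Adoesnothavel+1sigma} and pairing summands term-by-term via the sign identity in \cref{lemma2.1}(4), so no division by $2$ is ever required. The only slight imprecision is that the four lemmas do not all share the six-summand decomposition $F_1,\dots,F_6$ (the latter three use $G_1,G_2$ and $H_1,\dots,H_5$ instead), but your observation that each cancellation is term-by-term and symmetric under $s\leftrightarrow s'$ does apply to each of them, and your explicit handling of the $|A\cap[l]|\le l-3$ range is consistent with the appeal to \cite{Sob23} made in Proposition~\ref{prop:square-zero-1}.
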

\begin{proof}
    This follows by setting $s'=s$ in the formula for $\sigma_{\mathbf{e}_s}\sigma_{\mathbf{e}_{s'}}$ in the proofs of \Cref{lem:Ahasl+1sigma}, \ref{lem:Adoesnothavel+1sigma}, \ref{lem:l-1Ahasl+1sigma} and \ref{lem:l-1Adoesnothavel+1sigma}. Here we illustrate one case in detail. With the hypothesis for $A$ in \cref{lem:Ahasl+1sigma}, set $\{u,v\} = [l]\setminus A$. Then we have
    \begin{align*}
        &\ \ \ \ (\sigma_{\mathbf{e}_s}\sigma_{\mathbf{e}_{s'}}) (\epsilon_A) \\
        &= \!\begin{multlined}[t][.5\displaywidth]
                \big(\sum_{j\notin A\cup u \cup v} 
                \sign(u,A) \sign(j,A\cup u) a_{sj}a_{s'u}\frac{m_u m_j m_A}{m_{A\cup u\cup j}}  \epsilon_{A\cup u \cup j} \big) + \big( \sum_{j\notin A\cup u \cup v}\sign(v,A) \sign(j,A\cup v) a_{sj}a_{s'v}\frac{m_v m_j m_A}{m_{A\cup v\cup j}}  \epsilon_{A\cup v \cup j} \big)\\
                +\big( \sum_{j\notin A\cup u\cup v} \sign(j,A)  \sign(u,A\cup j) a_{s'j}a_{su}\frac{m_u m_j m_A}{m_{A\cup u\cup j}}   \epsilon_{A\cup u\cup j}  \big) + \big( \sum_{j\notin A\cup u\cup v} \sign(j,A)\sign(v,A\cup j)  a_{s'j}a_{sv}\frac{m_v m_j m_A}{m_{A\cup v\cup j}}  \epsilon_{A\cup v\cup j}  \big)\\
                +\big( \sum_{i,j\notin A\cup u\cup v, i\neq j} \sign(j,A)\sign(i,A\cup j) a_{s'j}a_{si}\frac{m_j m_i m_A}{m_{A\cup j\cup i}}    \epsilon_{A\cup j\cup i}  \big) +\big( \sum_{i,j\notin A\cup u\cup v, i\neq j} \sign(i,A)\sign(j,A\cup i) a_{s'i}a_{sj}\frac{m_j m_i m_A}{m_{A\cup j\cup i}}    \epsilon_{A\cup j\cup i}  \big)
        \end{multlined}\\
        &= F_1(s,s')+F_2(s,s')+F_3(s,s')+F_4(s,s')+F_5(s,s')+F_6(s,s')
    \end{align*}
        from the proof of \cref{lem:Ahasl+1sigma}, where the functions are set up to be equal to the 6 summands, respectively.  Using \cref{lemma2.1} (4), we observe that
    \begin{align*}
        F_1(s,s) + F_3(s,s) =0,\quad \quad 
        F_2(s,s) + F_4(s,s) =0,\quad \quad 
        F_5(s,s) + F_6(s,s) =0.
    \end{align*}
    Thus $(\sigma_{\mathbf{e}_s}\sigma_{\mathbf{e}_{s}}) (\epsilon_A) = F_1(s,s)+F_2(s,s)+F_3(s,s)+F_4(s,s)+F_5(s,s)+F_6(s,s)  =0$, as required.
\end{proof}

\noindent\textsc{Proof of \Cref{thm:system-pivot}:} We verify the three conditions of a system of higher homotopies described in \Cref{ES}. The condition (1) follows from the definition of $(\sigma)$. The condition (2) follows from \cref{prop:homotopy}. Finally, the condition (3) follows from Propositions \ref{prop:square-zero-1} and \ref{prop:square-zero-2}. \hfill $\square$

We end this section with new bounds on the Betti numbers of $R/I$ over $R$ using the new resolution we obtained by applying the Eisenbud-Shamash construction from \Cref{ES} to the pivot resolutions.

\begin{theorem}
    For any integer $i$, we have
    \[
    \beta_{2i}^R(R/I)\leq \sum_{j=0}^i \big( \binom{q}{2i} -\binom{q-\scarfnumber(I)}{2i-\scarfnumber(I)} \big) \binom{r+i-j-1}{r-1}
    \]
    and
    \[
    \beta_{2i+1}^R(R/I)\leq \sum_{j=0}^i \big( \binom{q}{2i+1} -\binom{q-\scarfnumber(I)}{2i+1-\scarfnumber(I)} \big) \binom{r+i-j-1}{r-1}.
    \]
\end{theorem}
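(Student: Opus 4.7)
The plan is to apply the Eisenbud-Shamash construction from \Cref{ES} directly to the pivot resolution $\mathcal{T}_{1,\ldots,l}$, using the explicit system of higher homotopies $(\sigma)$ produced in \Cref{thm:system-pivot}. By \Cref{thm:ES}, the resulting complex $\Phi(\mathcal{T}_{1,\ldots,l}) = D_{\tilde Q} \otimes_Q \mathcal{T}_{1,\ldots,l} \otimes_Q R$ is a free resolution of $R/I$ over $R$. Consequently $\beta_n^R(R/I)$ is bounded above by the $R$-rank of the $n$-th free module of $\Phi(\mathcal{T}_{1,\ldots,l})$, and the proof reduces to a bookkeeping calculation.

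The first step is to read off the rank of $(\mathcal{T}_{1,\ldots,l})_k$: by the corollary that computes the ranks of the pivot resolution, it equals $\binom{q}{k} - \binom{q-\scarfnumber(I)}{k-\scarfnumber(I)}$ (with the convention $\binom{-\infty}{-\infty}=0$). The second step is to recall that $D_{\tilde Q}$ is the free divided power algebra on $r$ generators of degree $2$, so its graded piece $(D_{\tilde Q})_{2j}$ is a free $Q$-module with basis the divided monomials $y_1^{(i_1)}\cdots y_r^{(i_r)}$ with $i_1+\cdots+i_r=j$, giving rank $\binom{r+j-1}{r-1}$.

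Combining these two observations, the homological degree $n$ piece of $\Phi(\mathcal{T}_{1,\ldots,l})$ decomposes as
\[
\bigoplus_{2j+k=n}(D_{\tilde Q})_{2j}\otimes_Q (\mathcal{T}_{1,\ldots,l})_k \otimes_Q R,
\]
so its $R$-rank is $\sum_{2j+k=n}\binom{r+j-1}{r-1}\bigl(\binom{q}{k}-\binom{q-\scarfnumber(I)}{k-\scarfnumber(I)}\bigr)$. Specializing to $n=2i$ forces $k=2(i-j)$ for $0\le j\le i$, and to $n=2i+1$ forces $k=2(i-j)+1$ for $0\le j\le i$. After the re-indexing $j\mapsto i-j$ (which replaces $\binom{r+j-1}{r-1}$ by $\binom{r+i-j-1}{r-1}$), one obtains exactly the bounds in the statement (with the understanding that the inner binomials should be evaluated at the correct parity $2j$ or $2j+1$ in place of the $2i$ that appears in the displayed formula).

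There is no real obstacle here: all of the substantive work has already been carried out in building the pivot resolution, computing its ranks via \Cref{cor:Betti-minimal-pivot}, and constructing the explicit system of higher homotopies in \Cref{thm:system-pivot}. The only point requiring care is the combinatorial book-keeping of the grading on $D_{\tilde Q}$ (each $y_s$ contributes $2$ to the homological degree via divided powers) and the split into even versus odd $n$, both of which are handled by the re-indexing above.
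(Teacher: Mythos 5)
Your approach is exactly the one the paper uses: apply the Eisenbud--Shamash construction to the pivot resolution (which is licensed by Theorem~\ref{thm:system-pivot} and Theorem~\ref{thm:ES}), bound the Betti numbers of $R/I$ by the ranks of the resulting free resolution, and compute those ranks using $\rank_Q(\mathcal{T}_{1,\dots,l})_k=\binom{q}{k}-\binom{q-\scarfnumber(I)}{k-\scarfnumber(I)}$ together with $\rank_Q(D_{\tilde Q})_{2j}=\binom{r+j-1}{r-1}$. This mirrors \cite[Proof of Corollary~3.3]{Sob23} with the Taylor ranks replaced by the pivot ranks, which is precisely the paper's one-line proof. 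Your parenthetical observation is also correct and worth emphasizing: the direct rank count gives $\sum_{j=0}^{i}\binom{r+i-j-1}{r-1}\bigl(\binom{q}{2j}-\binom{q-\scarfnumber(I)}{2j-\scarfnumber(I)}\bigr)$ (and similarly with $2j+1$ in the odd case), so the $\binom{q}{2i}$ and $\binom{q}{2i+1}$ appearing inside the sums in the theorem's display are typos for $\binom{q}{2j}$ and $\binom{q}{2j+1}$; as written, the displayed expression is constant in $j$ and is not even guaranteed to bound the true rank, since $k\mapsto\binom{q}{k}-\binom{q-\scarfnumber(I)}{k-\scarfnumber(I)}$ is not monotone.
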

\begin{proof}
    This follows immediately from the same rank counting procedure as in \cite[Proof of Corollary 3.3]{Sob23}, using the ranks of the pivot resolution from \cref{cor:Betti-minimal-pivot} instead of those of the Taylor resolution.
\end{proof}

\section*{Appendix}\label{app}

In this appendix, we provide the proofs of some of the lemmas from \Cref{system}. For convenience of notation, we symbolically write $\epsilon_A$ to denote $\frac{\epsilon_A}{m_A}$. Thus, the homotopy formula now looks like 
\begin{align*}
        \sigma_{e_{s}}(\epsilon_A)=\begin{cases}
            \sum_{j\notin A} \sign(j,A) a_{sj}m_j  \epsilon_{A\cup j} & \text{if } |A\cap [l]|\neq l-1\\
            \sum_{j\notin A\cup t} \sign(j,A) a_{sj}m_j  \epsilon_{A\cup j} & \text{if } [l] \setminus A = \{t\} \text{ and } l+1\in A \\
            \sum_{j\notin A\cup t} \sign(j,A) a_{sj}m_j  \epsilon_{A\cup j} +(-1)^{l+1} \sign(t,A)a_{st}m_t \sum_{i\in [l]} \sign(i,A\cup t\setminus i) \epsilon_{A\cup t\cup  (l+1)\setminus i} & \text{if } [l] \setminus A = \{t\} \text{ and } l+1\notin A
        \end{cases}
    \end{align*}
    and
\[
\sigma_{\mathbf{0}}=\delta, \qquad\sigma_{\mathbf{u}}=0 \text{ if } |\mathbf{u}|\geq 2.
\]
Also, the Taylor differentials (and also the pivot differentials, being restrictions of those of Taylor) become
\[
\partial(\epsilon_A) = \sum_{j\in A} \sign(j,A\setminus j) \epsilon_{A\setminus j}.
\]
\vspace{0.2cm}

 \noindent\textsc{Proof of \Cref{lem:Adoesnothavel+1del}:}  Using the formulae, we obtain 
    \begin{align*}&(\partial\sigma_{\mathbf{e}_s}+\sigma_{\mathbf{e}_s}\partial)(\epsilon_A)\\
        &= \partial\big(\sum_{j\notin A\cup t} \sign(j,A) a_{sj}m_j  \epsilon_{A\cup j} +(-1)^{l+1} \sign(t,A)a_{st}m_t \sum_{i\in [l]} \sign(i,A\cup t\setminus i) \epsilon_{A\cup t\cup  (l+1)\setminus i}\big) + \sigma_{\mathbf{e}_s} \big( \sum_{i\in A} \sign(i,A\setminus i) \epsilon_{A\setminus i} \big)\\
        &=\!\begin{multlined}[t][.5\displaywidth]
            \partial\big(\sum_{j\notin A\cup t} \sign(j,A) a_{sj}m_j  \epsilon_{A\cup j} \big) + \partial \big( (-1)^{l+1} \sign(t,A)a_{st}m_t \sum_{i\in [l]} \sign(i,A\cup t\setminus i) \epsilon_{A\cup t\cup  (l+1)\setminus i}\big)\\
            + \sigma_{\mathbf{e}_s} \big( \sum_{i\in A\cap [l]} \sign(i,A\setminus i) \epsilon_{A\setminus i} \big)  + \sigma_{\mathbf{e}_s} \big( \sum_{i\in A\setminus [l+1]} \sign(i,A\setminus i) \epsilon_{A\setminus i} \big).
        \end{multlined}
    \end{align*}
We will rewrite each of the four summands. We start with the first:
\begin{align*}
        &\ \ \ \ \partial\big(\sum_{j\notin A\cup t} \sign(j,A) a_{sj}m_j  \epsilon_{A\cup j} \big)\\
        &=\sum_{j\notin A\cup t} \sign(j,A) a_{sj}m_j  
         \sum_{i\in A\cup j}\sign(i,A\cup j\setminus i)\epsilon_{A\cup j\setminus i} \\
         &=\big(\sum_{j\notin A\cup t} \sum_{i\in A}\sign(j,A)\sign(i,A\cup j\setminus i) a_{sj}m_j  
         \epsilon_{A\cup j\setminus i} \big) + (\sum_{j\notin A\cup t} \sign(j,A)^2 a_{sj}m_j  \epsilon_{A} )\\
         &=\big(\sum_{j\notin A\cup t} \sum_{i\in A}\sign(j,A)\sign(i,A\cup j\setminus i) a_{sj}m_j  
         \epsilon_{A\cup j\setminus i} \big) + \big(\sum_{j\notin A\cup t} a_{sj}m_j  \epsilon_{A} \big).
    \end{align*}
Next we simplify the second summand:
     \begin{align*}
        &\ \ \ \ \partial \big( (-1)^{l+1} \sign(t,A)a_{st}m_t \sum_{i\in [l]} \sign(i,A\cup t\setminus i) \epsilon_{A\cup t\cup  (l+1)\setminus i}\big)\\
        &=(-1)^{l+1} \sign(t,A)a_{st}m_t \big( \sum_{i\in [l]} \sign(i,A\cup t\setminus i) \sum_{k\in A\cup t\cup  (l+1)\setminus i} \sign(k, A\cup t\cup  (l+1)\setminus i\setminus k)\epsilon_{A\cup t\cup  (l+1)\setminus i\setminus k}\big) \\
        &=(-1)^{l+1} \sign(t,A)\big( \sum_{i\in [l]} \sum_{k\in A\cup t\cup  (l+1)\setminus i} \sign(i,A\cup t\setminus i)  \sign(k, A\cup t\cup  (l+1)\setminus i\setminus k)a_{st}m_t \epsilon_{A\cup t\cup  (l+1)\setminus i\setminus k}\big) \\
        &=\!\begin{multlined}[t][.5\displaywidth]
         (-1)^{l+1} \sign(t,A)\big( \sum_{i\in [l]} \sum_{k\in A\setminus i} \sign(i,A\cup t\setminus i)  \sign(k, A\cup t\cup  (l+1)\setminus i\setminus k)a_{st}m_t \epsilon_{A\cup t\cup  (l+1)\setminus i\setminus k}\big)  \\
         + (-1)^{l+1} \sign(t,A)\big( \sum_{i\in [l]\setminus t} \sign(i,A\cup t\setminus i)  \sign(t, A\cup  (l+1)\setminus i)a_{st}m_t \epsilon_{A\cup  (l+1)\setminus i}\big) \\
         +(-1)^{l+1} \sign(t,A)\big( \sum_{i\in [l]}\sign(i,A\cup t\setminus i)  \sign(l+1, A\cup t\setminus i)a_{st}m_t \epsilon_{A\cup t\setminus i}\big) 
        \end{multlined}\\
        &=\!\begin{multlined}[t][.5\displaywidth]
         (-1)^{l+1} \sign(t,A)\big( \sum_{i\in [l]\setminus t} \sum_{k\in A\setminus i} \sign(i,A\cup t\setminus i)  \sign(k, A\cup t\cup  (l+1)\setminus i\setminus k)a_{st}m_t \epsilon_{A\cup t\cup  (l+1)\setminus i\setminus k}\big)  \\
         +(-1)^{l+1} \sign(t,A)\big(  \sum_{k\in A} \sign(t,A)  \sign(k, A\cup  (l+1)\setminus k)a_{st}m_t \epsilon_{A\cup  (l+1)\setminus k}\big) \\
         + (-1)^{l+1} \sign(t,A)\big( \sum_{i\in [l]\setminus t} \sign(i,A\cup t\setminus i)  \sign(t, A\setminus i)a_{st}m_t \epsilon_{A\cup  (l+1)\setminus i}\big) \\
         +(-1)^{l+1} \sign(t,A)\big( \sum_{i\in [l]}\sign(i,A\cup t\setminus i)  (-1)^{l-1}a_{st}m_t \epsilon_{A\cup t\setminus i}\big) 
        \end{multlined}\\
         &=\!\begin{multlined}[t][.5\displaywidth]
         (-1)^{l+1} \sign(t,A)\big( \sum_{i\in [l]\setminus t} \sum_{k\in A\setminus i} \sign(i,A\cup t\setminus i)  \sign(k, A\cup t\cup  (l+1)\setminus i\setminus k)a_{st}m_t \epsilon_{A\cup t\cup  (l+1)\setminus i\setminus k}\big)  \\
         +(-1)^{l+1} \big(  \sum_{i\in A}  \sign(i, A\cup  (l+1)\setminus i)a_{st}m_t \epsilon_{A\cup  (l+1)\setminus i}\big) \\
         + (-1)^{l+1} \sign(t,A)\big( \sum_{i\in [l]\setminus t} \sign(i,A\cup t\setminus i)  \sign(t, A\setminus i)a_{st}m_t \epsilon_{A\cup  (l+1)\setminus i}\big) \\
         + \sign(t,A)\big( \sum_{i\in [l]\setminus t}\sign(i,A\cup t\setminus i)  a_{st}m_t \epsilon_{A\cup t\setminus i}\big) + \big(  a_{st}m_t \epsilon_{A}\big) 
         \end{multlined}.
    \end{align*}
We remark that  the basis element $\epsilon_{A\cup t\cup (l+1)\setminus i\setminus k}$ appears two times for each pair $i,k\in [l]\setminus t$. We now show that the sum of the two coefficients is actually zero:
\begin{align*}
 &(-1)^{l+1} \sign(t,A)  \big( \sign(i,A\cup t\setminus i)  \sign(k, A\cup t\cup  (l+1)\setminus i\setminus k)+ \sign(k,A\cup t\setminus k)  \sign(i, A\cup t\cup  (l+1)\setminus i\setminus k) \big) a_{st}m_t  \\
        &=(-1)^{l+1} \sign(t,A)  \big( \sign(i,A\cup t\setminus i)  \sign(k, A\cup t\setminus i\setminus k)+ \sign(k,A\cup t\setminus k)  \sign(i, A\cup t\setminus i\setminus k) \big) a_{st}m_t\\
        &=0,
\end{align*}
which follows from the first equality of \cref{lemma2.1} (4).
Moreover, the basis element $\epsilon_{A\cup(l+1)\setminus i}$ also appears twice for each $i\in [l]\setminus t$. We will show that the sum of these two coefficients is also zero:
    \begin{align*}
        &(-1)^{l+1} \big(   \sign(i, A\cup  (l+1)\setminus i)a_{st}m_t \big) + (-1)^{l+1} \sign(t,A)\big( \sign(i,A\cup t\setminus i)  \sign(t, A\setminus i)a_{st}m_t \big) \\
        &= (-1)^{l+1} \big(   \sign(i, A\setminus i)a_{st}m_t \big) + (-1)^{l+1} \sign(t,A)\big( -\sign(t,A)  \sign(i, A\setminus i)a_{st}m_t \big)\\
        &= (-1)^{l+1} \big(   \sign(i, A\setminus i)a_{st}m_t \big) - (-1)^{l+1} \big(  \sign(i, A\setminus i)a_{st}m_t \big)\\
        &=0,
    \end{align*}
    where the first equality comes from the first equality of \cref{lemma2.1} (4). Thus the second  summand becomes
    \begin{align*}
        &\partial \big( (-1)^{l+1} \sign(t,A)a_{st}m_t \sum_{i\in [l]} \sign(i,A\cup t\setminus i) \epsilon_{A\cup t\cup  (l+1)\setminus i}\big)\\
         &=\!\begin{multlined}[t][.5\displaywidth]
         (-1)^{l+1} \sign(t,A)\big( \sum_{i\in [l]\setminus t} \sum_{k\in A\setminus [l+1]} \sign(i,A\cup t\setminus i)  \sign(k, A\cup t\cup  (l+1)\setminus i\setminus k)a_{st}m_t \epsilon_{A\cup t\cup  (l+1)\setminus i\setminus k}\big)  \\
         +(-1)^{l+1} \big(  \sum_{i\in A\setminus [l+1]}  \sign(i, A\cup  (l+1)\setminus i)a_{st}m_t \epsilon_{A\cup  (l+1)\setminus i}\big) 
         + \sign(t,A)\big( \sum_{i\in [l]\setminus t}\sign(i,A\cup t\setminus i)  a_{st}m_t \epsilon_{A\cup t\setminus i}\big) + \big(  a_{st}m_t \epsilon_{A}\big) 
         \end{multlined}.
    \end{align*}
Next we work on the third summand:
\begin{align*}
        & \sigma_{\mathbf{e}_s} \big( \sum_{i\in A\cap [l]} \sign(i,A\setminus i) \epsilon_{A\setminus i} \big) \\
        &=\big( \sum_{i\in A\cap [l]} \sign(i,A\setminus i) \sum_{j\notin A\setminus i} \sign(j,A\setminus i) a_{sj}m_j\epsilon_{A\cup j\setminus i} \big)\\
        &=\big( \sum_{i\in [l]\setminus t}\sum_{j\notin A\setminus i} \sign(i,A\setminus i)  \sign(j,A\setminus i) a_{sj}m_j\epsilon_{A\cup j\setminus i} \big)\\
        &=\big( \sum_{i\in [l]\setminus t}\sum_{j\notin A}  \sign(i,A\setminus i) \sign(j,A\setminus i) a_{sj}m_j\epsilon_{A\cup j\setminus i} \big) + \big( \sum_{i\in [l]\setminus t} \sign(i,A\setminus i) \sign(i,A\setminus i) a_{si}m_i\epsilon_{A} \big)\\
        &=\big( \sum_{j\notin A\cup t} \sum_{i\in [l]\setminus t}  \sign(i,A\setminus i) \sign(j,A\setminus i) a_{sj}m_j\epsilon_{A\cup j\setminus i} \big) + \big( \sum_{i\in [l]\setminus t}  \sign(i,A\setminus i) \sign(t,A\setminus i) a_{st}m_t\epsilon_{A\cup t\setminus i} \big) + \big( \sum_{i\in [l]\setminus t} a_{si}m_i\epsilon_{A} \big).
    \end{align*}
    Finally we work on the fourth summand:
     \begin{align*}
        &\sigma_{\mathbf{e}_s} \big( \sum_{i\in A\setminus [l+1]} \sign(i,A\setminus i) \epsilon_{A\setminus i} \big)\\
        &=\!\begin{multlined}[t][.5\displaywidth]
            \sum_{i\in A\setminus [l+1]} \sign(i,A\setminus i)  \big( \sum_{j\notin A\cup t\setminus i} \sign(j,A\setminus i) a_{sj}m_j  \epsilon_{A\cup j\setminus i} +(-1)^{l+1} \sign(t,A\setminus i)a_{st}m_t \sum_{k\in [l]} \sign(k,A\cup t\setminus i\setminus k) \epsilon_{A\cup t\cup  (l+1)\setminus i\setminus k} \big)
        \end{multlined}\\
        &=\!\begin{multlined}[t][.5\displaywidth]
            \sum_{i\in A\setminus [l+1]} \sign(i,A\setminus i)  \big( \sum_{j\notin A\cup t\setminus i} \sign(j,A\setminus i) a_{sj}m_j  \epsilon_{A\cup j\setminus i} \big)\\
            +(-1)^{l+1}\sum_{i\in A\setminus [l+1]} \sign(i,A\setminus i)  \big( \sign(t,A\setminus i)a_{st}m_t \sum_{k\in [l]} \sign(k,A\cup t\setminus i\setminus k) \epsilon_{A\cup t\cup  (l+1)\setminus i\setminus k} \big)
        \end{multlined}\\
        &=\!\begin{multlined}[t][.5\displaywidth]
            \big(\sum_{i\in A\setminus [l+1]} \sum_{j\notin A\cup t\setminus i} \sign(i,A\setminus i)    \sign(j,A\setminus i) a_{sj}m_j  \epsilon_{A\cup j\setminus i} \big)\\
            +(-1)^{l+1}  \big( \sum_{i\in A\setminus [l+1]} \sum_{k\in [l]} \sign(i,A\setminus i)\sign(t,A\setminus i)\sign(k,A\cup t\setminus i\setminus k)a_{st}m_t  \epsilon_{A\cup t\cup  (l+1)\setminus i\setminus k} \big).
        \end{multlined}
    \end{align*} 
    Here in the first part, we separate the case $j=i$ from it, and in the second part, we separate the case $k=t$:
    \begin{align*}
        &\ \ \ \ \!\begin{multlined}[t][.5\displaywidth]
            \big(\sum_{i\in A\setminus [l+1]} \sum_{j\notin A\cup t} \sign(i,A\setminus i)    \sign(j,A\setminus i) a_{sj}m_j  \epsilon_{A\cup j\setminus i} \big) + \big(\sum_{i\in A\setminus [l+1]} \sign(i,A\setminus i)    \sign(i,A\setminus i) a_{si}m_i  \epsilon_{A} \big)\\
            +(-1)^{l+1}  \big( \sum_{i\in A\setminus [l+1]} \sum_{k\in [l]\setminus t} \sign(i,A\setminus i)\sign(t,A\setminus i)\sign(k,A\cup t\setminus i\setminus k)a_{st}m_t  \epsilon_{A\cup t\cup  (l+1)\setminus i\setminus k} \big)\\
            + (-1)^{l+1}  \big( \sum_{i\in A\setminus [l+1]} \sign(i,A\setminus i)\sign(t,A\setminus i)\sign(t,A\setminus i)a_{st}m_t  \epsilon_{A\cup  (l+1)\setminus i} \big)
        \end{multlined}\\
        &=\!\begin{multlined}[t][.5\displaywidth]
            \big(\sum_{j\notin A\cup t} \sum_{i\in A\setminus [l+1]} \sign(i,A\setminus i)    \sign(j,A\setminus i) a_{sj}m_j  \epsilon_{A\cup j\setminus i} \big) + \big(\sum_{i\in A\setminus [l+1]} a_{si}m_i  \epsilon_{A} \big)\\
            +(-1)^{l+1} \sum_{i\in [l]\setminus t} \big( \sum_{k\in A\setminus [l+1]}  \sign(k,A\setminus k)\sign(t,A\setminus k)\sign(i,A\cup t\setminus i\setminus k)a_{st}m_t  \epsilon_{A\cup t\cup  (l+1)\setminus i\setminus k} \big)\\
            + (-1)^{l+1}  \big( \sum_{i\in A\setminus [l+1]} \sign(i,A\setminus i)a_{st}m_t  \epsilon_{A\cup  (l+1)\setminus i} \big),
        \end{multlined}
    \end{align*} 
    where the last equality comes from applying \cref{lemma2.1} (4) twice. In the rewritten forms we can see the coefficients clearly. We will simplify the coefficient for each basis element.
    \begin{itemize}
        \item The basis element $\epsilon_A$ appears in all four summands. We combine all of its coefficients:
        \begin{align*}
            \sum_{j\notin A\cup t}a_{sj}m_j +  a_{st}m_t 
                + \sum_{i\in [l]\setminus t} a_{si} m_i
                + \sum_{i\in A\setminus [l+1]} a_{si}m_i &=\sum_{j\notin A, j\geq l+1}a_{sj}m_j + a_{st}m_t   + \sum_{i\in [l]\setminus t} a_{si} m_i
                + \sum_{i\in A, i\geq l+1} a_{si}m_i \\
                &= \sum_{i\in [q]} a_{si}m_i = a_s.
        \end{align*}
        \item For each $j\notin A\cup t$ and each $i\in A$, the basis element $\epsilon_{A\cup j\setminus i}$ appears in the first summand and, depending on $i$, in the third or fourth summand. The coefficients, however, follow the same rule no matter what $i$ is. We combine all of its coefficients:
        \begin{align*}
            \big( \sign(j,A)\sign(i,A\cup j\setminus i) a_{sj}m_j \big) + \big( \sign(i,A\setminus i)\sign(j,A\setminus i) a_{sj}m_j\big) = 0.
        \end{align*}
        This follows from \cref{lemma2.1} (4).
        \item For each $i\in [l]\setminus t$ and each $k\in A\setminus [l+1]$, the basis element $\epsilon_{A\cup t\cup (l+1)\setminus i\setminus k}$ appears in the second and fourth summands. We combine all of its coefficients:
        \begin{align*}
            & \ \ \ \ \!\begin{multlined}[t][.5\displaywidth]
                \big( (-1)^{l+1}\sign(t,A) \sign(i,A\cup t\setminus i) \sign(k,A\cup t \cup (l+1)\setminus i \setminus k) a_{st}m_t \big)\\
                + \big( (-1)^{l+1} \sign(k,A\setminus k)\sign(t,A\setminus k)\sign(i,A\cup t\setminus i\setminus k) a_{st}m_t \big)
            \end{multlined}\\
            &=(-1)^{l+1}a_{st}m_t \big(  \sign(t,A) \sign(i,A\cup t\setminus i) \sign(k,A\cup t \cup (l+1)\setminus i \setminus k) + \sign(k,A\setminus k)\sign(t,A\setminus k)\sign(i,A\cup t\setminus i\setminus k) \big)\\
            &=(-1)^{l+1}a_{st}m_t \big(  \sign(t,A) \sign(i,A\cup t\setminus i) (-\sign(k,A\cup t\setminus i \setminus k)) + \sign(k,A\setminus k)\sign(t,A\setminus k)\sign(i,A\cup t\setminus i\setminus k) \big)\\
            &=(-1)^{l+1}a_{st}m_t \big(  -\sign(t,A) \sign(i,A\cup t\setminus i) \sign(k,A\cup t\setminus i \setminus k) + \sign(k,A\setminus k)\sign(t,A\setminus k)\sign(i,A\cup t\setminus i\setminus k) \big)\\
            &=(-1)^{l+1}a_{st}m_t \big(  -\sign(t,A) (\sign(i,A\cup t\setminus i) \sign(k,A\cup t\setminus i \setminus k)) + (\sign(k,A\setminus k)\sign(t,A\setminus k))\sign(i,A\cup t\setminus i\setminus k) \big)\\
            &=(-1)^{l+1}a_{st}m_t \big(  -\sign(t,A) (-\sign(k,A\cup t\setminus k) \sign(i,A\cup t\setminus i \setminus k)) + (-\sign(k,A\cup t\setminus k)\sign(t,A))\sign(i,A\cup t\setminus i\setminus k) \big)\\
            &=0,
        \end{align*}
        where we used \cref{lemma2.1} (1) and (4) for the equalities.
        \item For each $i\in A\setminus [l+1]$, the basis element $\epsilon_{A\cup (l+1)\setminus i}$ appears in the second and fourth summands. We combine all of its coefficients:
        \begin{align*}
            & \ \ \ \ \big( (-1)^{l+1} \sign(i,A\cup (l+1)\setminus i) a_{st}m_t \big) + \big( (-1)^{l+1}\sign(i,A\setminus i) a_{st}m_t \big)\\
            &=  \big( (-1)^{l+1} (-\sign(i,A\setminus i)) a_{st}m_t \big) + \big( (-1)^{l+1}\sign(i,A\setminus i) a_{st}m_t \big)=0,
        \end{align*}
        where we used \cref{lemma2.1} (1) for the first equality.
        \item For each $i\in [l]\setminus t$, the basis element $\epsilon_{A\cup t\setminus i}$ appears in the second and third summands. We combine all of its coefficients:
        \begin{align*}
             \big( \sign(t,A)\sign(i,A\cup t\setminus i)a_{st}m_t \big) + \big( \sign(i,A\setminus i)\sign(t,A\setminus i) a_{st}m_t \big)=0.
        \end{align*}
    \end{itemize} \hfill $\square$


\vspace{0.2cm}
\noindent\textsc{Proof of \Cref{lem:Adoesnothavel+1sigma}:}
    Set $\{u,v\} = [l]\setminus A$. Then we have
    \begin{align*}
        &\ \ \ \ (\sigma_{\mathbf{e}_s}\sigma_{\mathbf{e}_{s'}}) (\epsilon_A) \\
        &= \sigma_{\mathbf{e}_s} \big( \sum_{j\notin A} \sign(j,A) a_{s'j}m_j  \epsilon_{A\cup j}\big)\\
        &= \sigma_{\mathbf{e}_s} \big( \sign(u,A) a_{s'u}m_u  \epsilon_{A\cup u}\big) +\sigma_{\mathbf{e}_s} \big( \sign(v,A) a_{s'v}m_v  \epsilon_{A\cup v}\big) + \sigma_{\mathbf{e}_s} \big( \sum_{j\notin A\cup u\cup v} \sign(j,A) a_{s'j}m_j  \epsilon_{A\cup j}\big)\\
        &= \!\begin{multlined}[t][.5\displaywidth]
                \sign(u,A)a_{s'u}m_u \big(\sum_{j\notin A\cup u \cup v} \sign(j,A\cup u) a_{sj}m_j \epsilon_{A\cup u \cup j}  + (-1)^{l+1} \sign(v,A\cup u)a_{sv}m_v \sum_{i\in [l]} \sign(i,A\cup u\cup v\setminus i) \epsilon_{A\cup u\cup v\cup  (l+1)\setminus i} \big) \\
                + \sign(v,A)a_{s'v}m_v \big( \sum_{j\notin A\cup u \cup v} \sign(j,A\cup v) a_{sj}m_j \epsilon_{A\cup v \cup j} + (-1)^{l+1} \sign(u,A\cup v)a_{su}m_u \sum_{i\in [l]} \sign(i,A\cup u\cup v\setminus i) \epsilon_{A\cup u\cup v\cup  (l+1)\setminus i}\big)\\
                +\big( \sum_{j\notin A\cup u\cup v} \sign(j,A) a_{s'j}m_j \sum_{i\notin A\cup j} \sign(i,A\cup j) a_{si}m_i \epsilon_{A\cup j\cup i}  \big)
        \end{multlined}\\
        &=G_1(s,s')+G_2(s,s'),
    \end{align*}
    where $G_1(s,s')$ is the part that does not involve $(-1)^{l+1}$ and $G_2(s,s')$ is the part that does. We observe that $G_1(s,s')$ already appeared in the proof of \cref{lem:Ahasl+1sigma}, and from there we have $G_1(s,s')+G_1(s',s)=0$. We inspect the $G_2(s,s')$ part. We have
    \begin{align*}
        G_2(s,s')  &= \!\begin{multlined}[t][.5\displaywidth]
                (-1)^{l+1}\sign(u,A)\sign(v,A\cup u)a_{s'u}a_{sv}m_um_v \big(   \sum_{i\in [l]} \sign(i,A\cup u\cup v\setminus i) \epsilon_{A\cup u\cup v\cup  (l+1)\setminus i} \big)\\ + (-1)^{l+1} \sign(v,A)\sign(u,A\cup v)a_{s'v}a_{su}m_um_v \big( \sum_{i\in [l]} \sign(i,A\cup u\cup v\setminus i) \epsilon_{A\cup u\cup v\cup  (l+1)\setminus i}\big).
        \end{multlined}
    \end{align*}
    It now follows from \cref{lemma2.1} (4) that $G_2(s,s')+G_2(s',s)=0$. Thus 
    \[
(\sigma_{\mathbf{e}_s}\sigma_{\mathbf{e}_{s'}}+\sigma_{\mathbf{e}_{s'}}\sigma_{\mathbf{e}_s})(\epsilon_A) =\big( G_1(s,s')+G_2(s,s')\big)+ \big( G_1(s',s)+G_2(s',s) \big)=0,
    \]
    as required. \hfill $\square$


\vspace{0.2cm}
\noindent \textsc{Proof of \Cref{lem:l-1Ahasl+1sigma}:}
    Set $\{t\}=[l]\setminus A$. Then we have
    \begin{align*}                      
        (\sigma_{\mathbf{e}_s}\sigma_{\mathbf{e}_{s'}})(\epsilon_A)&=\sigma_{\mathbf{e}_s}(\sum_{j\notin A\cup t} \sign(j,A) a_{s'j}m_j  \epsilon_{A\cup j})\\
        &= \sum_{j\notin A\cup t} \sign(j,A) a_{s'j}m_j \sum_{i\notin A\cup t\cup j} \sign(i,A\cup j) a_{si}m_i  \epsilon_{A\cup i\cup j}  \\
        &=\sum_{i,j\notin A\cup t, i\neq j} \sign(j,A)\sign(i,A\cup j) a_{s'j}a_{si}m_im_j     \epsilon_{A\cup i\cup j}.
    \end{align*}
    Then we immediately obtain the formula for $ (\sigma_{\mathbf{e}_{s'}}\sigma_{\mathbf{e}_s})(\epsilon_A)$ by switching $s$ and $s'$. The result then follows immediately from \cref{lemma2.1} (4).
    \hfill $\square$ 


\vspace{0.2cm}
\noindent\textsc{Proof of \Cref{lem:l-1Adoesnothavel+1sigma}:}
    Set $\{t\}=[l]\setminus A$. Then we have
    \begin{align*}
        &\ \ \ \ (\sigma_{\mathbf{e}_s}\sigma_{\mathbf{e}_{s'}})(\epsilon_A)\\
        &=\sigma_{\mathbf{e}_s}\big( \sum_{j\notin A\cup t} \sign(j,A) a_{s'j}m_j  \epsilon_{A\cup j} +(-1)^{l+1} \sign(t,A)a_{s't}m_t \sum_{i\in [l]} \sign(i,A\cup t\setminus i) \epsilon_{A\cup t\cup  (l+1)\setminus i} \big)\\
        &=\sigma_{\mathbf{e}_s}\big( \sum_{j\notin A\cup t} \sign(j,A) a_{s'j}m_j  \epsilon_{A\cup j}\big) +(-1)^{l+1} \sign(t,A)a_{s't}m_t\sigma_{\mathbf{e}_s}\big(  \sum_{i\in [l]} \sign(i,A\cup t\setminus i) \epsilon_{A\cup t\cup  (l+1)\setminus i} \big)\\
        &= \!\begin{multlined}[t][.5\displaywidth]
            \sigma_{\mathbf{e}_s}\big( \sum_{j\notin A\cup [l+1]} \sign(j,A) a_{s'j}m_j  \epsilon_{A\cup j}\big) + \sigma_{\mathbf{e}_s}\big( \sign(l+1,A) a_{s',l+1}m_{l+1}  \epsilon_{A\cup (l+1)}\big) \\
            +(-1)^{l+1} \sign(t,A)a_{s't}m_t\sigma_{\mathbf{e}_s}\big(  \sum_{i\in [l]} \sign(i,A\cup t\setminus i) \epsilon_{A\cup t\cup  (l+1)\setminus i} \big)
        \end{multlined}\\
        &= \!\begin{multlined}[t][.5\displaywidth]
            \sigma_{\mathbf{e}_s}\big( \sum_{j\notin A\cup [l+1]} \sign(j,A) a_{s'j}m_j  \epsilon_{A\cup j}\big) + (-1)^{l+1}\sigma_{\mathbf{e}_s}\big( a_{s',l+1}m_{l+1}  \epsilon_{A\cup (l+1)}\big) \\
            +(-1)^{l+1} \sign(t,A)a_{s't}m_t\sigma_{\mathbf{e}_s}\big(  \sum_{i\in [l]} \sign(i,A\cup t\setminus i) \epsilon_{A\cup t\cup  (l+1)\setminus i} \big)
        \end{multlined}.
\end{align*}
We now apply the formula for $\sigma_{\mathbf{e}_s}$ appropriately:
\begin{align*}
    \\
        &\ \ \ \  \!\begin{multlined}[t][.5\displaywidth]
             \sum_{j\notin A\cup [l+1]} \sign(j,A) a_{s'j}m_j \big( \sum_{k\notin A\cup t \cup j} \sign(k,A\cup j) a_{sk}m_k  \epsilon_{A\cup k\cup j} +(-1)^{l+1} \sign(t,A)a_{st}m_t \sum_{i\in [l]} \sign(i,A\cup t\cup j\setminus i) \epsilon_{A\cup t \cup j\cup  (l+1)\setminus i}\big) \\
            + (-1)^{l+1}a_{s', l+1}m_{l+1}\big(   \sum_{j\notin A\cup t \cup (l+1)} \sign(j,A\cup (l+1)) a_{sj}m_j  \epsilon_{A\cup j \cup (l+1)} \big) \\
            +(-1)^{l+1} \sign(t,A)a_{s't}m_t\sum_{i\in [l]} \sign(i,A\cup t\setminus i) \big(  \sum_{j\notin A\cup t\cup  (l+1)} \sign(j,A\cup t\cup  (l+1)\setminus i) a_{sj}m_j  \epsilon_{A\cup t\cup  (l+1) \cup j \setminus i} \big)
        \end{multlined}\\
        &=\!\begin{multlined}[t][.5\displaywidth]
             \big( \sum_{j\notin A\cup [l+1]} \sum_{k\notin A\cup t \cup j} \sign(j,A)  \sign(k,A\cup j) a_{s'j}a_{sk}m_jm_k  \epsilon_{A\cup k\cup j} \big) \\
             +(-1)^{l+1} \sign(t,A)   \big( \sum_{i\in [l]}\sum_{j\notin A\cup [l+1]} \sign(j,A) \sign(i,A\cup t\cup j\setminus i) a_{s'j}a_{st}m_jm_t \epsilon_{A\cup t \cup j\cup  (l+1)\setminus i}\big)\\
            + (-1)^{l+1}\big( \sum_{j\notin A\cup [l+1]} \sign(j,A\cup (l+1))    a_{s', l+1}  a_{sj}m_j m_{l+1} \epsilon_{A\cup j \cup (l+1)} \big) \\
            +(-1)^{l+1} \sign(t,A)\big(\sum_{i\in [l]}    \sum_{j\notin A\cup  [l+1]} \sign(i,A\cup t\setminus i)\sign(j,A\cup t\cup  (l+1)\setminus i) a_{s't}a_{sj}m_jm_t  \epsilon_{A\cup t\cup  (l+1) \cup j \setminus i} \big).
        \end{multlined}
\end{align*}
We separate the case $k=l+1$ from the first summand, and using \cref{lemma2.1} on some of the sign functions:
\begin{align*}
         &\!\begin{multlined}[t][.5\displaywidth]
             \big( \sum_{j,k\notin A\cup [l+1], j\neq k} \sign(j,A)  \sign(k,A\cup j) a_{s'j}a_{sk}m_jm_k  \epsilon_{A\cup k\cup j} \big) + (-1)^{l+1}\big( \sum_{j\notin A\cup [l+1]} \sign(j,A)  a_{s'j}a_{s, l+1}m_jm_{l+1}  \epsilon_{A\cup j\cup (l+1)} \big) \\
             +(-1)^{l+1} \sign(t,A)   \big( \sum_{i\in [l]}\sum_{j\notin A\cup [l+1]} \sign(j,A) \sign(i,A\cup t\setminus i) a_{s'j}a_{st}m_jm_t \epsilon_{A\cup t \cup j\cup  (l+1)\setminus i}\big)\\
            + (-1)^{l}\big( \sum_{j\notin A\cup [l+1]} \sign(j,A)    a_{s', l+1}  a_{sj}m_j m_{l+1} \epsilon_{A\cup j \cup (l+1)} \big) \\
            +(-1)^{l} \sign(t,A)\big(\sum_{i\in [l]}    \sum_{j\notin A\cup  [l+1]} \sign(i,A\cup t\setminus i)\sign(j,A) a_{s't}a_{sj}m_jm_t  \epsilon_{A\cup t\cup  (l+1) \cup j \setminus i} \big)
        \end{multlined}\\
            &=H_1(s,s')+H_2(s,s')+H_3(s,s')+H_4(s,s')+H_5(s,s'),
    \end{align*}
    where the new functions are set to be equal to the summands correspondingly. It then follows from \cref{lemma2.1} (4) that
    \begin{align*}
        H_1(s,s')+H_1(s',s)=0,\quad
        H_2(s,s')+H_4(s',s)=0,\quad
        H_3(s,s')+H_5(s',s)=0,\\
        H_4(s,s')+H_2(s',s)=0,\quad
        H_5(s,s')+H_3(s',s)=0. \quad \quad \quad \quad \quad \quad
    \end{align*}
    Thus the result follows. \hfill $\square$

\bibliographystyle{amsplain}
\bibliography{refs}

\providecommand{\bysame}{\leavevmode\hbox to3em{\hrulefill}\thinspace}
\providecommand{\MR}{\relax\ifhmode\unskip\space\fi MR }
\providecommand{\MRhref}[2]{%
  \href{http://www.ams.org/mathscinet-getitem?mr=#1}{#2}
}
\providecommand{\href}[2]{#2}
\begin{thebibliography}{10}

\bibitem{Avr81}
Luchezar~L. Avramov, \emph{Obstructions to the existence of multiplicative structures on minimal free resolutions}, American Journal of Mathematics \textbf{103} (1981), 1.

\bibitem{Avr98}
Luchezar~L. Avramov, \emph{Infinite free resolutions}, pp.~1--118, Birkh{\"a}user Basel, Basel, 1998.

\bibitem{Bat02}
Ekkehard Batzies, \emph{Discrete morse theory for cellular resolutions}, Ph.D. Thesis, University of Marburg (2002).

\bibitem{BW02}
Ekkehard Batzies and Volkmar Welker, \emph{Discrete {M}orse theory for cellular resolutions}, J. Reine Angew. Math. \textbf{543} (2002), 147--168.

\bibitem{BPS98}
Dave Bayer, Irena Peeva, and Bernd Sturmfels, \emph{Monomial resolutions}, Math. Res. Lett. \textbf{5} (1998), no. 1--2, 31--46.

\bibitem{BSW}
Kristen~A. Beck and Sean Sather-Wagstaff, \emph{A somewhat gentle introduction to differential graded commutative algebra}, Connections Between Algebra, Combinatorics, and Geometry (New York, NY) (Susan~M. Cooper and Sean Sather-Wagstaff, eds.), Springer New York, 2014, pp.~3--99.

\bibitem{BE77}
David Buchsbaum and David Eisenbud, \emph{Algebra structures for finite free resolutions, and some structure theorems for ideals of codimension 3}, American Journal of Mathematics \textbf{99} (1977), 447--485.

\bibitem{CK24}
Trung Chau and Selvi Kara, \emph{Barile--{M}acchia resolutions}, J. Algebraic Combin. \textbf{59} (2024), no.~2, 413--472. \MR{4713508}

\bibitem{Ei80}
David Eisenbud, \emph{Homological algebra on a complete intersection, with an application to group representations}, Transactions of the American Mathematical Society \textbf{260} (1980), 35--64.

\bibitem{Eisenbud:2013}
David Eisenbud, \emph{Commutative algebra: with a view toward algebraic geometry}, vol. 150, Springer Science \& Business Media, 2013.

\bibitem{Eisenbud/Peeva:2016}
David Eisenbud and Irena Peeva, \emph{Minimal free resolutions over complete intersections}, vol. 2152, Springer, 2016.

\bibitem{Ge76}
Demissu Gemeda, \emph{Multiplicative structure of finite free resolutions of ideals generated by monomials in an {R}-sequence}, Ph.D. thesis, ProQuest LLC, Ann Arbor, MI, 1976, Thesis (Ph.D.)–Brandeis University. MR 2626146, 1976.

\bibitem{M2}
Daniel~R. Grayson and Michael~E. Stillman, \emph{Macaulay2, a software system for research in algebraic geometry}, Available at \url{https://math.uiuc.edu/Macaulay2/}.

\bibitem{Iyen97}
Srikanth~B. Iyengar, \emph{Free resolutions and change of rings}, Journal of Algebra \textbf{190} (1997), 195--213.

\bibitem{Kat19}
Lukas Katthän, \emph{The structure of dga resolutions of monomial ideals}, Journal of Pure and Applied Algebra \textbf{223} (2019), no.~3, 1227--1245.

\bibitem{Ly88}
Gennady Lyubeznik, \emph{A new explicit finite free resolution of ideals generated by monomials in an {R}-sequence}, J. Pure Appl. Alg. \textbf{51} (1988), 193--195.

\bibitem{Nasseh2021}
Saeed Nasseh and Keri Sather-Wagstaff, \emph{Applications of differential graded algebra techniques in commutative algebra}, pp.~589--616, Springer International Publishing, Cham, 2021.

\bibitem{PeevaThesis}
Irena Peeva, \emph{Strongly stable ideals}, Ph.D. thesis, Brandeis University, 1994.

\bibitem{Pe11}
\bysame, \emph{Graded syzygies}, Springer-Verlag London, Ltd., London, 2011.

\bibitem{Pollitz/Sega:2024}
Josh Pollitz and Liana~M Sega, \emph{Relations between poincar$\backslash$'e series for quasi-complete intersection homomorphisms}, arXiv preprint arXiv:2403.17079 (2024).

\bibitem{Roberts:1980}
Paul Roberts, \emph{Homological invariants of modules over commutative rings}, Sem. Math. Sup. \textbf{72} (1980).

\bibitem{Sha69}
Jack Shamash, \emph{The poincaré series of a local ring}, Journal of Algebra \textbf{12} (1969), no.~4, 453--470.

\bibitem{Sko11}
Emil Sk\"oldberg, \emph{Resolutions of modules with initially linear syzygies}, arXiv:1106.1913v2 (2011).

\bibitem{Sob23}
Aleksandra Sobieska, \emph{A {T}aylor resolution over complete intersections}, Journal of Algebra \textbf{636} (2023), 716--731.

\bibitem{Tate57}
John Tate, \emph{Homology of noetherian rings and local rings}, Illinois Journal of Mathematics \textbf{1} (1957), 14--27.

\bibitem{Tay66}
Diana~Kahn Taylor, \emph{Ideals generated by monomials in an {R}-sequence}, Ph.D. thesis, University of Chicago, Department of Mathematics, 1966.

\end{thebibliography}
\end{document}